\numberwithin{equation}{section}
\newcommand{\margnote}[1]{
\ifthenelse{\boolean{shownotes}}%
{\marginpar{\raggedright\tiny\texttt{#1}}}%
{}%
}
\newcommand{\hole}[1]{
\ifthenelse{\boolean{shownotes}}%
{\begin{center} \fbox{ \rule {.25cm}{0cm}
\rule[-.1cm]{0cm}{.4cm} \parbox{.85\textwidth}{\begin{center}
\texttt{#1}\end{center}} \rule {.25cm}{0cm}}\end{center}}
{}
}
\theoremstyle{plain}
\newtheorem{lemma}{Lemma}[section]
\newtheorem{theorem}[lemma]{Theorem}
\newtheorem{corollary}[lemma]{Corollary}
\theoremstyle{definition}
\newtheorem{remark}[lemma]{Remark}
\newtheorem{definition}[lemma]{Definition}
\theoremstyle{remark}
\newcommand{\Id}{\mathrm{Id}}
\newcommand{\A}{\mathbb{A}}
\newcommand{\R}{\mathbb{R}}
\newcommand{\C}{\mathbb{C}}
\newcommand{\N}{\mathbb{N}}
\newcommand{\bbS}{\mathbb{S}}
\newcommand{\cO}{{\mathcal{O}}}
\newcommand{\cT}{{\mathcal{T}}}
\newcommand{\cM}{{\mathcal{M}}}
\newcommand{\cL}{{\mathcal{L}}}
\newcommand{\cD}{{\mathcal{D}}}
\newcommand{\cS}{{\mathcal{S}}}
\newcommand{\cA}{{\mathcal{A}}}
\newcommand{\cZ}{{\mathcal{Z}}}
\newcommand{\V}{\mathcal{V}}
\newcommand{\cV}{\mathcal{V}}
\newcommand{\cC}{{\mathcal{C}}}
\newcommand{\vep}{\varepsilon}
\renewcommand{\Re}{\mathrm{Re}\,}
\newcommand{\ep}{\epsilon}
\newcommand{\ess}{\sigma_\mathrm{\tiny{ess}}}
\newcommand{\ptsp}{\sigma_\mathrm{\tiny{pt}}}
\newcommand{\Ldper}{L^2_\mathrm{\tiny{per}}}
\newcommand{\Huper}{H^1_\mathrm{\tiny{per}}}
\newcommand{\Hdper}{H^2_\mathrm{\tiny{per}}}
\newcommand{\Hsper}{H^s_\mathrm{\tiny{per}}}
\newcommand{\Hsdper}{H^{s+2}_\mathrm{\tiny{per}}}
\newcommand{\Hper}{H_\mathrm{\tiny{per}}}
\begin{document}

\title[Orbital instability of periodic waves for viscous balance laws]{Orbital instability of periodic waves for scalar viscous balance laws}

\author[E. \'{A}lvarez]{Enrique \'{A}lvarez}
 
\address{{\rm (E. \'{A}lvarez)} Departamento de F\'{\i}sica Matem\'atica\\Instituto de 
Investigaciones en Matem\'aticas Aplicadas y en Sistemas\\Universidad Nacional Aut\'onoma de 
M\'exico\\ Circuito Escolar s/n, Ciudad Universitaria, C.P. 04510\\Cd. de M\'{e}xico (Mexico)}

\email{enrique.alvarez@ciencias.unam.mx}

\author[J. Angulo Pava]{Jaime Angulo Pava}

\address{{\rm (J. Angulo Pava)} Department of Mathematics\\IME-USP\\Rua do Mat\~ao 1010, Cidade Universit\'aria, CEP 05508-090,
 S\~ao Paulo, SP (Brazil)}

\email{angulo@ime.usp.br}

\author[R. G. Plaza]{Ram\'on G. Plaza}

\address{{\rm (R. G. Plaza)} Departamento de Matem\'aticas y Mec\'anica\\Instituto de 
Investigaciones en Matem\'aticas Aplicadas y en Sistemas\\Universidad Nacional Aut\'onoma de 
M\'exico\\ Circuito Escolar s/n, Ciudad Universitaria, C.P. 04510\\Cd. de M\'{e}xico (Mexico)}

\email{plaza@mym.iimas.unam.mx}

\begin{abstract}
The purpose of this paper is to prove that, for a large class of nonlinear evolution equations known as scalar viscous balance laws, the spectral (linear) instability condition of periodic traveling wave solutions implies their orbital (nonlinear) instability in appropriate periodic Sobolev spaces. The analysis is based on the well-posedness theory, the smoothness of the data-solution map, and an abstract result of instability of equilibria under nonlinear iterations. The resulting instability criterion is applied to two families of periodic waves. The first family consists of small amplitude waves with finite fundamental period which emerge from a local Hopf bifurcation around a critical value of the velocity. The second family comprises arbitrarily large period waves which arise from a homoclinic (global) bifurcation and tend to a limiting traveling pulse when their fundamental period tends to infinity. In the case of both families, the criterion is applied to conclude their orbital instability under the flow of the nonlinear viscous balance law in periodic Sobolev spaces with same period as the fundamental period of the wave.
\end{abstract}

\keywords{orbital instability, periodic traveling waves, spectral instability, viscous balance laws.}

\subjclass[2020]{35B35, 35C07, 35B10, 35K55}

\maketitle

\setcounter{tocdepth}{1}



\section{Introduction}

Scalar viscous balance laws in one space dimension are equations of the form
\begin{equation}
\label{VBL}
u_t + f(u)_x = u_{xx} + g(u),
\end{equation}
where $u = u(x,t) \in \R$ is a scalar unknown, $x \in \R$ and $t > 0$ denote the space and time variables, respectively, and $f = f(u)$ and $g = g(u)$ are nonlinear functions. Equation \eqref{VBL} describes the dynamics of a scalar quantity $u$ in a one dimensional domain, which is subject to three different mechanisms: the reaction $g(u)$ may describe production/consumption, chemical reactions or combustion, among other interactions; the density $u$ is (nonlinearly) transported with speed $f'(u)$; and the diffusion of $u$ is represented by the Laplace operator, $\partial_x^2$. In sum, scalar viscous balance laws constitute simplified models that combine diffusion (viscosity), convection and reaction into one single equation. For an abridged list of references on scalar viscous balance laws, see \cite{CroMas07,AlPl21,HaeSa06,Hae03}.

In these models, one of the most important mathematical solution types is the traveling wave. A \emph{spatially periodic} traveling wave solution to \eqref{VBL} has the form
\begin{equation}
\label{tws}
u(x,t) = \varphi(x-ct),
\end{equation}
where the constant $c \in \R$ is the speed of the wave and the profile function, $\varphi = \varphi(\xi)$, $\xi \in \R$, is a sufficiently smooth periodic function of its argument with fundamental period $L > 0$. 

In this paper, we are concerned with the stability of a periodic wave as a solution to the equation \eqref{VBL}. In general, stability of a specific traveling wave solution under small perturbations is a fundamental property for understanding the real-world dynamics of models of evolution type. Examples of  such models are the non-linear Schr\"odinger equation, the sine-Gordon equation and models of Korteweg-de Vries type, among others. The existence and stability theory of periodic waves has developed rapidly in recent years and it has drawn the attention of researchers from different areas of science, such as in fluid mechanics, optics, biology, or engineering, just to mention a few. New methods and theories have been developed using tools of non-linear analysis, bifurcation theory, spectral theory, as well as Fourier and harmonic analyses. The following (abridged) list of references may provide the reader with a panoramic idea of the recent evolution of the theory, \cite{AngAMS09, AngNat14, AngNat16, AnLN08, AngNat08, AngNat09,AlPl21,JMMP14,BJNRZ10}. In the particular case of scalar viscous balance laws, the analyses available in the literature have mainly focused on the stability of traveling fronts on the real line (see, e.g., \cite{Xing05, XuJJ16,WuXi05} and the references therein). In contrast, less attention has been paid to the stability of periodic waves.

%
%
%
%
%
%
%
%
%
%
%
%


In this work, we are interested in establishing new results related to the dynamics of periodic traveling waves for models of the general type in \eqref{VBL}. The present analysis can be regarded as a complement to the recent study in \cite{AlPl21}, where the existence and spectral instability of periodic waves for equations of the form \eqref{VBL} were established. The natural question is whether this spectral information guarantees the instability of the waves under the nonlinear evolution. Hence, the purpose of this paper is to show that, if a periodic wave is spectrally unstable then it is also nonlinearly (orbitally) unstable under the flow of the evolution equation \eqref{VBL}. For such statement to be meaningful, it is crucial to specify the spaces under which the spectrum is calculated, and for which the well-posedness holds. Our instability criterion warrants the orbital instability of the manifold generated by any spectrally unstable periodic wave, under the flow of the nonlinear viscous balance law \eqref{VBL} in periodic Sobolev spaces with \emph{same period} as the fundamental period of the wave. 

The analysis is based on a combination of the local well-posedness theory for \eqref{VBL}, the implicit function theorem, and an (important) abstract result by Henry {\it{et al.}} \cite{HPW82}, which essentially determines the instability of a manifold of equilibria under iterations of a nonlinear map with unstable linearized spectrum. This general abstract theorem has been the basis of the nonlinear instability theory of periodic waves in other contexts, such as the KdV equation \cite{LopO02}, the critical KdV and NLS models \cite{AngNat09}, KdV systems \cite{AnLN08}, and for general dispersive models \cite{AngNat16}, just to mention a few. In order to apply the theorem by Henry \emph{et al.}, some essential elements are needed, such as a suitable well-posedness theory and the property that the data-solution map is of class $C^2$.

There exist several studies of well-posedness for parabolic equations of the form \eqref{VBL} available in the literature (see, e.g., \cite{AmannI-95,AmannII-19,LaSoUr68,Arm66}). For convenience of the reader, we present a detailed (yet concise) proof of local well-posedness of the Cauchy problem for equations of the form \eqref{VBL} in periodic Sobolev spaces of distributions (in the spirit of the analysis of Iorio and Iorio \cite{IoIo01} for nonlinear equations). Even though our well-posedness analysis is, indeed, quite standard, several refined estimates in the course of proof need to be established as they are used to prove the smoothness of the data-solution map, an important key element of the abstract result by Henry \emph{et al.} Moreover, up to our knowledge, the well-posedness of equations of the form \eqref{VBL} in Sobolev spaces of $L$-periodic distributions has not been reported as such in the literature.

Once the orbital instability criterion is at hand, one may ask about its applicability to particular examples. In the aforementioned recent paper \cite{AlPl21}, the authors applied dynamical systems techniques in order to show that, under certain structural assumptions, there exist two families of periodic waves for equations of the form \eqref{VBL}. The first family emerges from a local Hopf bifurcation when the speed $c$ crosses a critical value $c_0$. These waves have small-amplitude and finite period. The second family is generated by a global homoclinic bifurcation around a second critical value of the speed $c_1$, which is the speed of a traveling pulse or homoclinic wave. These periodic waves have amplitude of order $O(1)$ but have large period tending to $\infty$ (which can be regarded as the period of the traveling pulse). A couple of examples and numerical computations, which illustrate both families of waves, are also presented. Therefore, in order to present the applicability of the criterion, we study the orbital instability of both families of waves via a verification of the conditions for an unstable spectrum. The two families are parametrized by small parameter $\ep > 0$ (measuring the deviation of the speed of the wave from the critical speed in each case). Hence, we obtain instability under the flow of the evolution equation in periodic spaces with same period of the wave, once the parameter $\ep > 0$ is fixed (see Theorems  \ref{teoorbsmall} and \ref{teoorblarge} below). 

The paper is structured as follows. In Section \ref{secprelim} we make precise the notions of spectral and orbital instability of periodic waves and state the main results of the paper, namely, the orbital instability criterion and the well-posedness theorem. Section \ref{secwellpos} is devoted to the well-posedness theory for equations of the form \eqref{VBL} in periodic Sobolev spaces. Special attention is devoted to show that the data-solution map is smooth enough. Section \ref{secmain} contains the proof that spectral instability implies orbital instability, upon application of an abstract result on instability of equilibrium points. The final Section \ref{secappl} contains the description of the two families of periodic waves found in \cite{AlPl21} and verifies the appropriate hypotheses to apply our orbital instability criterion.


\subsection*{On notation}
Linear operators acting on infinite-dimensional spaces are indicated with calligraphic letters (e.g., $\cL$), except for the identity operator which is indicated by $\Id$. The domain of a linear operator, $\cL : X \to Y$, with $X$, $Y$ Banach spaces, is denoted as $\cD(\cL) \subseteq X$. For a closed linear operator with dense domain the usual definitions of resolvent and spectra apply (cf. Kato \cite{Kat80}). When computed with respect to the space $X$, the spectrum of $\cL$ is denoted as $\sigma(\cL)_{|X}$. We denote the real part of a complex number $\lambda \in \C$ by $\Re\lambda$. The classical Lebesgue and Sobolev spaces of complex-valued functions on the real line will be denoted as $L^2(\R)$ and $H^m(\R)$, with $m \in \N$, endowed with the standard inner products and norms. For any $L > 0$ and any $s \in \R$, we denote by $\Hsper = \Hsper([0,L])$ the Sobolev space of $L$-periodic distributions such that
\[
\| u \|^2_s := L \sum_{k=-\infty}^{\infty} (1 + |k|^2)^s |\widehat{u}(k)|^2 < \infty,
\]
where $\widehat{u}$ is the Fourier transform of $u$. According to custom we denote $H^0_\mathrm{\tiny{per}} = \Ldper$. If $s > k + \tfrac{1}{2}$, $k \in \N \cup \{0\}$, then there holds the continuous embedding, $\Hsper \hookrightarrow C^k_\mathrm{\tiny{per}}$, where $C^k_\mathrm{\tiny{per}}$ is the space of $L$-periodic functions with $k$ continuous derivatives. The translation operator in $\Hsper([0,L])$ will be denoted as $\zeta_\eta : \Hsper([0,L]) \to \Hsper([0,L])$, $\zeta_\eta(u) = u(\cdot + \eta)$ for any $\eta \in \R$. Translation is a smooth operator in $\Hsper([0,L])$. Moreover, if $s \geq 0$ then we have $\| \zeta_\eta(u) \|_s = \| u \|_s$ for all $u \in \Hsper$ and all $\eta \in \R$ (see Iorio and Iorio \cite{IoIo01} for details).

\section{Stability Framework and Main Theorems}
\label{secprelim}
In this section  we describe the different notions of stability under consideration and state our main results.
 
\subsection{Spectral stability}

Suppose that a sufficiently smooth profile function, $\varphi = \varphi(\cdot)$, determines an $L$-periodic traveling wave solution to \eqref{VBL} of the form \eqref{tws} for some speed value $c \in \R$. Substitution of \eqref{tws} into \eqref{VBL} yields the following ODE for the profile,
\begin{equation}
\label{profileq}
-c \varphi' + f'(\varphi)\varphi' = \varphi'' + g(\varphi).
\end{equation}
With a slight abuse of notation let us rescale the space variable as $x \mapsto x - c t$ (the co-moving Galilean frame) in order to transform \eqref{VBL} into the equation
\begin{equation}
\label{VBLg}
u_t = u_{xx} + g(u) +cu_x - f(u)_x,
\end{equation}
for which now the periodic wave is a stationary solution, $u(x,t) = \varphi(x)$, in view of \eqref{profileq}. For solutions to \eqref{VBLg} of the form $\varphi(x) + v(x,t)$, where $v$ denotes a nearby perturbation, the leading approximation is given by the linearization of this equation around $\varphi$, namely
\[
v_t = v_{xx} + (c - f'(\varphi))v_x + (g'(\varphi) - f'(\varphi)_x) v.
\]
Specializing to perturbations of the form $v(x,t) = e^{\lambda t} u(x)$, where $\lambda \in \C$ and $u$ lies in an appropriate Banach space $X$, we arrive at the eigenvalue problem
\begin{equation}
\label{primeraev}
\lambda u = u_{xx} + (c - f'(\varphi))u_x + (g'(\varphi) - f'(\varphi)_x) u,
\end{equation}
in which the complex growth rate appears as the eigenvalue. Intuitively, a necessary condition for the wave to be ``stable" is the absence of eigenvalues with $\Re \lambda > 0$, precluding exponentially growing models at the linear level. Motivated by the notion of spatially localized, finite energy perturbations in the Galilean coordinate frame in which the periodic wave is stationary, we consider $X = L^2(\R)$ and define the linearized operator around the wave as
\begin{equation}
\label{linop}
\left\{
\begin{aligned}
\cL^c \, &: \, L^2(\R) \longrightarrow L^2(\R),\\
\cL^c \, &: = \, \partial_x^2 + a_1(x) \partial_x + a_0(x) \Id, 
\end{aligned}
\right.
\end{equation}
with dense domain $\cD(\cL^c) = H^2(\R)$, and where the coefficients,
\begin{equation}
\label{defas}
\begin{aligned}
a_1(x) &:= c - f'(\varphi),\\
a_0(x) &:= g'(\varphi) - f'(\varphi)_x,
\end{aligned}
\end{equation}
are bounded and periodic, satisfying $a_j(x + L) = a_j(x)$ for all $x \in \R$, $j = 0,1$. $\cL^c$ is a densely defined, closed operator acting on $L^2(\R)$ with domain $\cD(\cL^c) = H^2(\R)$. Hence, the eigenvalue problem \eqref{primeraev} is recast as $\cL^c u = \lambda u$ for some $\lambda \in \C$ and $u \in \cD(\cL^c) = H^2(\R)$.
%

\begin{definition}[spectral stability]
\label{defspectstab}
We say that a bounded periodic wave $\varphi$ is \emph{spectrally stable} as a solution to the viscous balance law \eqref{VBL} if the $L^2$-spectrum of the linearized operator around the wave defined in \eqref{linop} satisfies
\[
\sigma(\cL)_{|L^2(\R)} \cap \{\lambda \in \C \, : \, \Re \lambda > 0\} = \varnothing.
\]
Otherwise we say that it is \emph{spectrally unstable}.
\end{definition}

\begin{remark}
We remind the reader that any complex number $\lambda$ belongs to the point spectrum of an operator $\cL$, denoted as $\ptsp(\cL)$, if $\cL - \lambda$ is a Fredholm operator with index equal to zero and with a non-trivial kernel. $\lambda$ belongs to the essential spectrum, $\ess(\cL)$, provided that either $\cL - \lambda$ is not Fredholm, or it is Fredholm with non-zero index. Clearly, $\ptsp(\cL), \ess(\cL) \subset \sigma(\cL)$. Moreover, since the operator is closed, $\sigma(\cL) = \ptsp(\cL) \cup \ess(\cL)$. The point spectrum consists of isolated eigenvalues with finite (algebraic) multiplicity (see \cite{Kat80,KaPro13} for further information).
\end{remark}

Since the coefficients of the operator $\cL^c$ are periodic, it is well known from Floquet theory that $\cL^c$ has no $L^2$-point spectrum and that its spectrum is purely essential (or continuous), $\sigma(\cL^c)_{|L^2(\R)} = \ess(\cL^c)_{|L^2(\R)}$ (see Lemma 3.3 in \cite{JMMP14}, or Lemma 59, p. 1487, in \cite{DunSch2}). However, it is possible to parametrize the spectrum in terms of Floquet multipliers of the form $e^{i\theta} \in \bbS^1$, $\theta \in \R$ (mod $2\pi$) via a \emph{Bloch transformation} \cite{KaPro13,Grd97}. Indeed, the purely essential spectrum $\sigma(\cL^c)_{|L^2(\R)}$ can be written as the union of partial point spectra:
\begin{equation}
\label{Floquetrep}
\sigma(\cL^c)_{|L^2(\R)} =  \!\!\bigcup_{-\pi<\theta \leq \pi}\ptsp(\cL^c_\theta)_{|\Ldper([0,L])},
\end{equation}
where the one-parameter family of Bloch operators,
\begin{equation}
\label{Blochop}
\left\{
\begin{aligned}
\cL^c_\theta &:= (\partial_x + i\theta/L)^2 + a_1(x) (\partial_x + i \theta/L) + a_0(x) \Id,\\
\cL^c_\theta &: \Ldper([0,L]) \to \Ldper([0,L]),
\end{aligned}
\right.
\end{equation}
with domain $\cD(\cL^c_\theta) = \Hdper([0,L])$, are parametrized by the Floquet exponent (or \emph{Bloch parameter}) $\theta \in (-\pi,\pi]$, and act on the periodic Sobolev space with same period $L > 0$ as the period of the wave. Since the family has compactly embedded domains in $\Ldper = \Ldper([0,L])$ then their spectrum consists entirely of isolated eigenvalues, $\sigma(\cL^c_\theta)_{|\Ldper} = \ptsp(\cL^c_\theta)_{|\Ldper} $. Moreover, they depend continuously on the Bloch parameter $\theta$, which may be regarded as a local coordinate for the spectrum $\sigma(\cL^c)_{|L^2(\R)}$ (see Proposition 3.7 in \cite{JMMP14}), meaning that $\lambda \in \sigma(\cL^c)_{|L^2(\R)}$ if and only if $\lambda \in \ptsp(\cL^c_\theta)_{|\Ldper}$ for some $\theta \in (-\pi,\pi]$. The parametrization \eqref{Floquetrep} is called the \emph{Floquet characterization of the spectrum} (for details, see \cite{AlPl21,JMMP14,KaPro13,Grd97} and the references therein). As a consequence of \eqref{Floquetrep} we conclude that the periodic wave $\varphi$ is $L^2$-spectrally unstable if and only if there exists $\theta_0 \in (-\pi, \pi]$ for which
\[
\ptsp(\cL_{\theta_0}^\ep)_{|\Ldper([0,L])} \cap \{ \lambda \in \C \, : \, \Re \lambda > 0\} \neq \varnothing.
\]

\begin{remark}
Notice that when the Bloch parameter is $\theta = 0$, the expression of the operator $\cL_0^c$ coincides with that of the linearized operator around the wave in \eqref{linop}, but now acting on a periodic space:
\begin{equation}
\label{linopBloch0}
\left\{
\begin{aligned}
\cL_0^c \, &: \, \Ldper([0,L]) \longrightarrow \Ldper([0,L]),\\
\cL_0^c \, &: = \, \partial_x^2 + a_1(x) \partial_x + a_0(x) \Id.
\end{aligned}
\right.
\end{equation}
\end{remark}

\subsection{Orbital stability}

Once the spectral (in)stability of a periodic wave is established, a natural question arises. Is the traveling wave solution nonlinearly stable, in a certain sense, with respect to the flow of the equation \eqref{VBL}? Can we deduce from the spectral (in)stability of a periodic wave a nonlinear (in)stability result? In this paper we prove that spectral instability implies orbital (nonlinear) instability in a sense that is described below.

First, note that if the profile function $\varphi = \varphi(\cdot)$ is smooth enough then it belongs to the periodic space $\Hdper([0,L])$. Hence, one can compare the motion $\varphi(x - ct)$, as a solution to \eqref{VBL}, to a general class of motions $u = u(x,t)$ evolving  from initial conditions, $u(0) =\psi$, that are close in some sense to $\varphi$. The notion of orbital stability is, thus, the property that $u(\cdot,t)$ remains close to $\varphi(\cdot +\gamma)$, $\gamma=\gamma(t)$, for all times provided that $u(0)$ starts close to $\varphi(\cdot)$. In other words, the type of stability that we expect is that the perturbation remains close to the manifold generated by translations of the traveling wave, leading to the concept of \emph{orbital stability} (also called stability \emph{in shape} \cite{AngAMS09}). We define the orbit generated by $\varphi$ as the set
\[
\cO_\varphi = \{ \varphi(\cdot + r) \, : \, r \in \R \} \subset \Hdper([0,L]).
\]
We note that $\cO_\varphi$ represents a $C^1$-curve, $\Gamma=\Gamma(r)$, in $\Hdper([0,L])$ determined by the parameter $r\in \mathbb R$, $\Gamma(r)=\zeta_r( \varphi)$. Thus, the traveling  wave profile will be orbitally stable if its orbit $\Gamma$ is stable by the flow generated by the evolution equation. Consequently, we have the following definition associated to \eqref{VBL} (cf. \cite{AngAMS09}).

\begin{definition}[orbital stability]
\label{deforbital}
Let $X$, $Y$ be Banach spaces, with the continuous embedding $Y \hookrightarrow X$. Let $\varphi \in X$ be a traveling wave solution to equation \eqref{VBL}. We say $\varphi$ is \emph{orbitally stable} in $X$ by the flow of \eqref{VBL} if for each $\vep > 0$ there exists $\delta = \delta(\vep) > 0$ such that if $\psi \in Y$ and 
\[
\inf_{r \in \R} \| \psi(\cdot) - \varphi(\cdot + r) \|_X < \delta,
\]
then the solution $u(x,t)$ of \eqref{VBL} with initial condition $u(0) = \psi$ exists globally and satisfies
\[
\sup_{t>0} \inf_{r \in \R} \| u(\cdot, t) - \varphi(\cdot + r) \|_X < \vep.
\] 
Otherwise we say that $\varphi$ is \emph{orbitally unstable} in $X$.
\end{definition}

In the present context of periodic waves for equations of the form \eqref{VBL}, we select $X = Y = \Hdper([0,L])$, where $L > 0$ is the fundamental period of the wave. 

\subsection{Main results}

Let us now state the main results of the paper. The first theorem establishes the local well-posedness of the evolution equation \eqref{VBL} in periodic Sobolev spaces.

\begin{theorem}
\label{teolocale}
Assume $f \in C^2(\R)$, $g \in C^1(\R)$, $L > 0$ and $s > 3/2$. If $\phi \in \Hsper([0,L])$ then there exist some ${T} = {T}(\| \phi \|_s) > 0$ and a unique solution $u \in C([0,T];\Hsper([0,L])) \cap C^1((0,T];\Hper^{s-2}([0,L]))$ to the Cauchy problem for equation \eqref{VBL} with initial datum $u(0)=\phi$. For each $T_0\in (0,T)$, the data-solution map,
\[
\phi\in \Hsper([0,L]) \mapsto u_\phi\in C([0,T_0];\Hsper([0,L])),
\]
is continuous. Moreover, if we further assume $f \in C^4(\R)$ and $g \in C^3(\R)$ then the data-solution map is of class $C^2$.
\end{theorem}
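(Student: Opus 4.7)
The plan is to reformulate the Cauchy problem as a Duhamel integral equation for the periodic heat semigroup, run a contraction mapping argument to obtain local existence, uniqueness, and continuity of the data-solution map, and then upgrade to $C^2$-smoothness via the implicit function theorem.

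First, I would introduce the periodic heat semigroup $E(t) := e^{t\partial_x^2}$ on $\Ldper([0,L])$, defined on the Fourier side by $\widehat{E(t)\phi}(k) = e^{-(2\pi k/L)^2 t}\widehat{\phi}(k)$. Elementary computations give the contraction bound $\|E(t)\phi\|_s \le \|\phi\|_s$ together with the parabolic smoothing estimate
\[
\|E(t)\phi\|_{s+\tau} \le C_\tau\, t^{-\tau/2}\,\|\phi\|_s, \qquad t > 0,\ \tau \ge 0.
\]
A sufficiently regular solution of \eqref{VBL} with $u(0)=\phi$ must then satisfy the mild formulation
\[
u(t) = E(t)\phi + \int_0^t E(t-\tau)\bigl[g(u(\tau)) - \partial_x f(u(\tau))\bigr]\,d\tau.
\]

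Second, because $s > 3/2 > 1/2$, $\Hsper$ is a Banach algebra continuously embedded in $C^1_{\mathrm{per}}$, and classical Moser-type estimates guarantee that the Nemytskii map $u \mapsto F(u)$ is locally Lipschitz on $\Hsper$ whenever $F \in C^1(\R)$. The derivative loss in $\partial_x f(u)$ is handled by shifting the derivative onto the semigroup,
\[
\int_0^t E(t-\tau)\,\partial_x f(u(\tau))\,d\tau = \int_0^t \partial_x E(t-\tau)\,f(u(\tau))\,d\tau,
\]
whose integrand is bounded in $\Hsper$ by $C(t-\tau)^{-1/2}\|f(u(\tau))\|_s$. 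The integrability of the singularity at $\tau = t$ allows a Banach fixed-point argument on a closed ball of $C([0,T]; \Hsper)$ with $T = T(\|\phi\|_s)$, producing a unique mild solution $u_\phi$ and the Lipschitz continuity of $\phi \mapsto u_\phi$. The extra regularity $u_\phi \in C^1((0,T]; \Hper^{s-2})$ then follows by differentiating the integral equation in $t$ and exploiting parabolic smoothing for $t > 0$.

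Third, to upgrade to $C^2$-smoothness under $f \in C^4$ and $g \in C^3$, I would apply the $C^2$ implicit function theorem to
\[
\Psi(\phi, u)(t) := u(t) - E(t)\phi - \int_0^t E(t-\tau)\bigl[g(u(\tau)) - \partial_x f(u(\tau))\bigr]\,d\tau
\]
on $\Hsper \times C([0,T_0]; \Hsper)$. The key ingredient is the standard fact that for $F \in C^{m+1}(\R)$ and $s > 1/2$, the Nemytskii operator $u \mapsto F(u)$ is of class $C^m$ from $\Hsper$ into itself; with $m = 2$ this covers $g \in C^3$, while the extra derivative in $f \in C^4$ exactly compensates the loss incurred by $\partial_x$ acting on $f(u)$ inside the time integral, where it is absorbed by the $(t-\tau)^{-1/2}$ singularity of $\partial_x E(t-\tau)$. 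At the solution $(\phi, u_\phi)$, the linearization $D_u \Psi(\phi, u_\phi) = \Id - K$ is a compact Volterra-type perturbation of the identity, invertible on $C([0,T_0]; \Hsper)$ by a Gronwall estimate for the linear variational parabolic equation. The implicit function theorem then produces $\phi \mapsto u_\phi$ as a $C^2$ map into $C([0,T_0]; \Hsper)$. The main technical obstacle is precisely this $C^2$-step: keeping careful track of the interplay between the Nemytskii regularity of $f$ and $g$, the derivative loss in $\partial_x f(u)$, and the smoothing of $E(t-\tau)$, so that both Fr\'echet derivatives of $\Psi$ in $(\phi, u)$ are continuous on bounded sets.
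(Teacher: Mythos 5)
Your proposal is correct and follows essentially the same strategy as the paper: recast \eqref{CpVBL} via Duhamel, exploit the parabolic smoothing estimate (Corollary \ref{corregineq}) to absorb the one-derivative loss from the flux term with an integrable $(t-\tau)^{-1/2}$ singularity, run a Banach fixed-point argument on $C([0,T];\Hsper)$, and then apply the implicit function theorem to $\Gamma(\phi,w)$ to obtain $C^2$-smoothness of the data-solution map under the strengthened regularity $f\in C^4$, $g\in C^3$. The only cosmetic difference is how the derivative loss is organized: you move $\partial_x$ onto the heat kernel and estimate $\|\partial_x E(t-\tau)\|$, whereas the paper keeps $F(u,u_x)=g(u)-f'(u)u_x$ and measures it in $\Hper^{s-1}$ before using the $\Hper^{s-1}\to\Hsper$ smoothing of $\V(t-\tau)$; these are equivalent bookkeeping choices and lead to the same contraction and invertibility estimates.
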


The second main result is precisely the general criterion for orbital instability based on an unstable spectrum of the linearized operator around the wave. It establishes orbital instability under the flow of the nonlinear viscous balance law in periodic Sobolev spaces with same period as the fundamental period of the wave.  


\begin{theorem}[orbital instability criterion for viscous balance laws]
\label{mainthem}
Suppose that $f \in C^4(\R)$, $g \in C^3(\R)$. Let $u(x,t) = \varphi (x-ct)$ be a periodic traveling wave solution with speed $c \in \R$ to the viscous balance law \eqref{VBL}, where the profile function $\varphi = \varphi(\cdot)$ is of class $C^2$ and has fundamental period $L > 0$. Assume that the following \emph{spectral instability property} holds: the linearized operator around the wave, $\cL_0^c : \Ldper([0,L]) \to \Ldper([0,L])$, defined in \eqref{linopBloch0}, has an unstable eigenvalue, that is, there exists $\lambda \in \C$ with $\Re \lambda > 0$ and some eigenfunction $\Psi \in \cD(\cL_0^c) = \Hdper([0,L]) \subset \Ldper([0,L])$ such that $\cL_0^c \Psi = \lambda \Psi$. Then the periodic traveling wave is orbitally unstable in $X = \Hper^2([0,L])$ under the flow of \eqref{VBL}.
\end{theorem}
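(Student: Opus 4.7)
My plan is to convert the spectral instability of the generator $\mathcal{L}_0^c$ into instability of the time-$t_0$ flow map at the fixed point $\varphi$, and then to invoke the abstract iterative instability theorem of Henry, P\'erez, and Wreszinski \cite{HPW82} adapted to the manifold of translates $\mathcal{O}_\varphi$.

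\textbf{Setup.} First I would pass to the Galilean frame \eqref{VBLg}, in which the wave becomes the stationary solution $u(x,t)\equiv\varphi(x)$ and every translate $\zeta_r(\varphi)$ is also stationary, so that $\mathcal{O}_\varphi$ is a one-dimensional $C^1$-submanifold of equilibria in $\Hdper([0,L])$. I then fix some $t_0>0$ small enough that, by Theorem \ref{teolocale} applied with $s=2$ (and using the hypotheses $f\in C^4$, $g\in C^3$), there is an $\Hdper([0,L])$-neighborhood $U$ of $\varphi$ on which the flow map $S:U\to\Hdper([0,L])$, $S(\phi):=u_\phi(\cdot,t_0)$, is well defined and of class $C^2$, and every point of $\mathcal{O}_\varphi\cap U$ is a fixed point of $S$.

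\textbf{Linearization and spectrum.} Next I would identify $dS(\varphi)$ with the time-$t_0$ value of the analytic semigroup $\{e^{t\mathcal{L}_0^c}\}_{t\geq 0}$ generated by $\mathcal{L}_0^c$. Since $\mathcal{L}_0^c$ is a uniformly elliptic second-order operator with bounded $L$-periodic coefficients on $\Ldper([0,L])$, it is sectorial with compact resolvent and generates an analytic semigroup which restricts to a strongly continuous semigroup on $\Hdper([0,L])$. The identification $dS(\varphi)=e^{t_0\mathcal{L}_0^c}$ follows by writing \eqref{VBLg} in Duhamel form and differentiating in the initial datum at $\phi=\varphi$, using the $C^2$ data-solution map from Theorem \ref{teolocale} to justify the exchange of derivatives and limits. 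The eigenvalue part of the spectral mapping is then immediate: from $\mathcal{L}_0^c\Psi=\lambda\Psi$ one obtains $dS(\varphi)\Psi=e^{t_0\lambda}\Psi$, so that $\mu_0:=e^{t_0\lambda}$ is an eigenvalue of $dS(\varphi)$ with $|\mu_0|=e^{t_0\Re\lambda}>1$ and finite-dimensional generalized eigenspace.

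\textbf{Application of the abstract theorem.} The neutral direction at $\varphi$ is provided by $\varphi'$: differentiating the profile equation \eqref{profileq} in $x$ and using translation invariance shows $\mathcal{L}_0^c\varphi'=0$, hence $dS(\varphi)\varphi'=\varphi'$, and this direction is tangent to $\mathcal{O}_\varphi$ at $\varphi$. I would then choose a smooth local $\Hdper$-slice $\Sigma$ transverse to $\mathcal{O}_\varphi$ at $\varphi$ (via the implicit function theorem applied to a functional picking out the shift parameter $r$) and reduce $S$ to a $C^2$ self-map of $\Sigma$ having $\varphi$ as an isolated fixed point, whose linearization still carries the unstable eigenvalue $\mu_0$. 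The Henry-P\'erez-Wreszinski theorem \cite{HPW82} then provides sequences $\phi_n\to\varphi$ in $\Hdper([0,L])$ and integers $k_n\in\mathbb{N}$ such that
\[
\inf_{r\in\R}\bigl\|S^{k_n}(\phi_n)-\varphi(\cdot+r)\bigr\|_2 \;\geq\; \delta_0
\]
for some universal $\delta_0>0$. Since $S^{k_n}(\phi_n)=u_{\phi_n}(\cdot,k_nt_0)$, this directly negates Definition \ref{deforbital} in $X=\Hdper([0,L])$ and yields the claimed orbital instability.

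\textbf{Main obstacle.} The most delicate points are (i) the rigorous identification $dS(\varphi)=e^{t_0\mathcal{L}_0^c}$, which requires the mild formulation of \eqref{VBLg} together with the refined $H^s$-estimates from Section \ref{secwellpos} to differentiate the Duhamel operator in the initial datum and to recognize the resulting variational equation as generating $e^{t\mathcal{L}_0^c}$; and (ii) the reduction to the transverse slice $\Sigma$ so that the HPW theorem can be invoked cleanly in the presence of the one-dimensional kernel $\mathrm{span}\{\varphi'\}$, together with the translation of the resulting discrete-time iterative instability of $S|_\Sigma$ back into the orbital language of Definition \ref{deforbital}.
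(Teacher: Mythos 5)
Your proposal is essentially correct and follows the same overall architecture as the paper: exploit the $C^{2}$ data-solution map from Theorem~\ref{teolocale}, form the time-$T$ flow map with $\varphi$ a fixed point, identify its Fr\'echet derivative with the solution operator of the linearized Cauchy problem, deduce that $e^{\lambda T}$ is an eigenvalue of that derivative with modulus greater than one, and then invoke the abstract iteration theorem of Henry, P\'erez and Wreszinski. Two points of genuine divergence from the paper are worth flagging. First, the paper works in the lab frame and sets $\cS(\phi)=\zeta_{cT}(u_\phi(T))$, rather than changing to the Galilean frame; this is cosmetically different from your approach but equivalent (indeed, your remark that \eqref{VBLg} is again of the form \eqref{VBL} with modified flux is exactly why the lab-frame well-posedness transfers). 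Second, and more substantively, the paper does \emph{not} reduce to a transverse slice $\Sigma$: it quotes the extension of the HPW theorem to a $C^{1}$ curve of fixed points (Remark~\ref{remHenry}, incorporated into Corollary~\ref{corhenry}), which says directly that iterates move away from the whole curve $\cO_\varphi$, so the slice reduction is unnecessary. Your slice route can be made to work, but it hides several obligations you would need to discharge explicitly: the retraction onto $\Sigma$ along the orbit must be shown $C^{2}$; the linearization of the retracted map at $\varphi$ must be shown to retain the unstable eigenvalue $\mu_0$ (here one needs $\Psi\notin\Span\{\varphi'\}$, which holds because $\cL_0^c\varphi'=0$ while $\lambda\neq 0$); and the final distance estimate on $\Sigma$ has to be converted into a lower bound on distance to the full orbit. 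The paper's approach is cleaner because the abstract result already incorporates the orbit. Finally, your identification $dS(\varphi)=e^{t_0\cL_0^c}$ calls on more abstract machinery (sectoriality and the analytic semigroup of $\cL_0^c$) than the paper needs: the paper simply proves global well-posedness of the linearized Cauchy problem on $X_2$ (Lemma~\ref{lemglobalwp}), exhibits $e^{\lambda t}\Psi$ as its solution with datum $\Psi$, and concludes $\cS'(\varphi)\Psi=e^{\lambda T}\Psi$ by uniqueness, avoiding semigroup generation theory altogether.
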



Additionally, in this paper we apply Theorem \ref{mainthem} to prove the nonlinear instability of periodic waves belonging to the two families described in the Introduction, whose existence and spectral instability were proved in \cite{AlPl21} (see Theorems  \ref{teoorbsmall} and \ref{teoorblarge} below).



\section{Local well-posedness}
\label{secwellpos}

In this section we establish the local well-posedness in $\Hsper([0,L])$ for any $s > 3/2$, of the model equation \eqref{VBL}. Our analysis is standard and it is based on Banach's fixed point theorem. Albeit the arguments are classical and without major problems, several estimates are key ingredients in order to obtain the smoothness of the data-solution map associated to \eqref{VBL} (see Section \ref{secsmooth}). In the sequel (and for the rest of the paper) we use the notation,
\[
X_s=\Hsper([0,L]), \qquad \text{for any } s \in \R.
\] 

We start with the recollection of well-known facts.

\subsection{The heat semigroup in $X_s$}

The following properties of the heat semigroup acting on periodic Sobolev spaces can be found in the book by Iorio and Iorio \cite{IoIo01}.
\begin{theorem}
\label{teoheatSG}
The Cauchy problem for the heat equation,
\begin{equation}
\label{heat}
\begin{aligned}
u_t& = u_{xx} , \\
u(0) &= \phi, 
\end{aligned}
\end{equation}
is globally well-posed in $X_s$ for any $s \in \R$, $L > 0$. That is, if $\phi \in X_s$ then there exists a unique mild solution $u \in C([0,T];X_s)$ for all $T > 0$. The solution is given by
\[
u(t) = \V(t) \phi,
\]
where the family of operators, $\V(t) : X_s \to X_s$, $t \geq 0$, is the heat $C_0$-semigroup of contractions,
\[
\V(t) \phi := \big( e^{-k^2 t} \widehat{\phi} \, \big)^{\vee},
\]
with generator $\cT = \partial_x^2$ and dense domain $\cD = \Hsdper([0,L])$. The solution depends continuously on the initial data in the following sense,
\[
\sup_{t\in[0, \infty)} \big\| \V(t) \phi_1 - \V(t) \phi_2 \big\|_s \leq \| \phi_1 - \phi_2\|_s.
\]
\end{theorem}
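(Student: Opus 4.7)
The plan is to reduce the problem to a countable family of scalar linear ODEs via Fourier series in the $L$-periodic setting, and to verify each assertion directly from the Fourier representation $\widehat{\V(t)\phi}(k) = e^{-k^2 t}\widehat{\phi}(k)$ together with the definition $\|u\|_s^2 = L\sum_k (1+|k|^2)^s |\widehat{u}(k)|^2$.

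First I would check that $\V(t)$ maps $X_s$ into itself and is a contraction: since $|e^{-k^2 t}| \leq 1$ for $t \geq 0$, termwise comparison in the Fourier series yields $\|\V(t)\phi\|_s \leq \|\phi\|_s$. The semigroup identity $\V(t+\tau) = \V(t)\V(\tau)$ and $\V(0) = \Id$ follow from the multiplicativity of the exponential symbol. For strong continuity at $t=0$, I would apply dominated convergence to $(1+|k|^2)^s |e^{-k^2 t}-1|^2 |\widehat{\phi}(k)|^2$, which is pointwise $\to 0$ as $t \to 0^+$ and is dominated by $4(1+|k|^2)^s |\widehat{\phi}(k)|^2$, summable by the assumption $\phi \in X_s$. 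Strong continuity at any $t_0>0$ then follows from the semigroup property and the contraction estimate.

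To identify the infinitesimal generator with $\cT = \partial_x^2$ on $\cD(\cT) = X_{s+2}$, I would show that for $\phi \in X_{s+2}$ the difference quotient $t^{-1}(\V(t)\phi - \phi)$ converges to $\partial_x^2 \phi$ in $X_s$ as $t \to 0^+$. On the Fourier side, the multiplier $t^{-1}(e^{-k^2 t}-1)$ tends pointwise to $-k^2$ and satisfies $|t^{-1}(e^{-k^2 t}-1)| \leq k^2$ uniformly in $t>0$, so dominated convergence with dominating sequence $(1+|k|^2)^s k^4 |\widehat{\phi}(k)|^2 \leq (1+|k|^2)^{s+2}|\widehat{\phi}(k)|^2$ yields the limit. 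For the reverse inclusion, if $\phi \in X_s$ and the difference quotient converges in $X_s$ to some $\psi$, termwise identification of Fourier coefficients forces $\widehat{\psi}(k) = -k^2 \widehat{\phi}(k)$; membership $\psi \in X_s$ then implies $\phi \in X_{s+2}$.

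Uniqueness and continuous dependence are then immediate from linearity: if $u \in C([0,T];X_s)$ has $u(0)=0$, taking Fourier coefficients reduces each mode to the scalar ODE $\dot{y} = -k^2 y$ with $y(0)=0$, so $u \equiv 0$; and $\sup_{t \geq 0}\|\V(t)\phi_1 - \V(t)\phi_2\|_s \leq \|\phi_1-\phi_2\|_s$ by linearity and contractivity. Global existence for all $T>0$ is automatic from the explicit formula. No step is technically deep; the main points that require care are the uniform-in-$t$ dominations used in the $C_0$-continuity and generator arguments, where the Sobolev weight must be matched against $|\widehat{\phi}(k)|^2$ correctly in order to invoke dominated convergence in $\ell^1$.
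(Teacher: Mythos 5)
Your argument is correct and is essentially the standard proof: the paper itself simply cites Iorio and Iorio (Corollary 4.16, Theorems 4.9, 4.14, 4.25) for these facts, and the proofs given there proceed exactly as you do, by reading everything off the Fourier-side multiplier $e^{-k^2 t}$ and invoking dominated convergence in the weighted $\ell^2$ sum defining $\|\cdot\|_s$. The only minor point worth flagging is that uniqueness of the mild solution is already built into the variation-of-constants formula when the forcing vanishes, so the mode-by-mode ODE argument is not really needed there; and in the reverse generator inclusion one should note that the cross term $(1+|k|^2)^s\,2k^2\,|\widehat{\phi}(k)|^2$ is controlled by the $1$ and $k^4$ contributions via $2k^2 \leq 1 + k^4$, which you gloss over but which is immediate.
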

\begin{proof}
Follows from standard theory: it is a particular case (with $q=0$) of Corollary 4.16 and Theorems 4.9, 4.14 and 4.25 in Iorio and Iorio \cite{IoIo01}, pp. 218--232.
\end{proof}

\begin{corollary}
\label{corheat}
For all $s \in \R$, $N \geq 0$ and any $\phi \in \Hsper$,
\begin{equation}
\label{limsmN}
\lim_{h \to 0} \left\| h^{-1} (\V(t+h) - \V(t))\phi - \partial_x^2 (\V(t) \phi ) \right\|_{s-N} = 0,
\end{equation}
uniformly with respect to $t \geq 0$. In particular, there exists a uniform $\overline{C} > 0$ such that
\begin{equation}
\label{unifcota}
\| h^{-1} (\V(h) - \Id) - \partial_x^2 \| \leq \overline{C},
\end{equation}
in the operator norm for all small $0 < |h| \ll 1$.
\end{corollary}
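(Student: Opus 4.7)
The proof reduces to a scalar estimate on the symbol of the heat semigroup via the Fourier characterization of $X_s$. Since the $k$-th Fourier coefficient of $\V(t)\phi$ equals $e^{-k^2 t}\widehat{\phi}(k)$, introducing the multiplier
\[
m(k,h) := \frac{e^{-k^2 h}-1}{h} + k^2, \qquad k \in \Z,\ h > 0,
\]
Parseval's identity gives
\[
\bigl\|h^{-1}(\V(t+h)-\V(t))\phi - \partial_x^2 \V(t)\phi\bigr\|_{s-N}^2 = L \sum_{k \in \Z} (1+|k|^2)^{s-N} \, |m(k,h)|^2 \, e^{-2k^2 t} \, |\widehat{\phi}(k)|^2.
\]
The entire statement will follow by analyzing the convergence of this series as $h \to 0$ through dominated convergence.

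Two elementary properties of $m(\cdot,\cdot)$ do the job. Pointwise in $k$, $\lim_{h \to 0} m(k,h) = 0$, as this is nothing but the derivative at zero of $h \mapsto e^{-k^2 h}$. For the uniform bound, the elementary inequality $1 - e^{-x} \leq x$ (valid for all $x \geq 0$) yields
\[
|m(k,h)| \,\leq\, \frac{1-e^{-k^2 h}}{h} + k^2 \,\leq\, 2k^2 \,\leq\, 2(1+|k|^2),
\]
uniformly in $h > 0$. Hence each summand of the series above is dominated by $4(1+|k|^2)^{s-N+2}|\widehat{\phi}(k)|^2$ independently of $t$ (absorbing $e^{-2k^2 t} \leq 1$), and for $N \geq 2$ this majorant is summable thanks to $\phi \in X_s$. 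The Dominated Convergence Theorem (for counting measure) then forces the limit of the full series to be zero at $t = 0$; and since for every $t > 0$ the corresponding series is term-wise dominated by the one at $t = 0$, the convergence is automatically uniform in $t \geq 0$. This yields \eqref{limsmN}.

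For the operator-norm estimate \eqref{unifcota}, one simply specializes the computation above to $t = 0$. The same bound $|m(k,h)| \leq 2(1+|k|^2)$ gives
\[
\bigl\|(h^{-1}(\V(h)-\Id) - \partial_x^2)\phi\bigr\|_{s-N}^2 \leq 4L \sum_{k \in \Z} (1+|k|^2)^{s-N+2} |\widehat{\phi}(k)|^2 \leq 4\,\|\phi\|_s^2,
\]
valid for all $N \geq 2$ and all $0 < |h| \ll 1$; thus $h^{-1}(\V(h)-\Id) - \partial_x^2$ maps $X_s$ into $X_{s-N}$ with operator norm at most $\overline{C} = 2$. The only conceptual subtlety is the unavoidable two-derivative loss, which merely reflects the fact that $\partial_x^2$ is bounded $X_s \to X_{s-2}$ but not on $X_s$ itself; this is precisely what the parameter $N$ in the statement is designed to accommodate, and no genuine technical obstacle arises beyond this bookkeeping.
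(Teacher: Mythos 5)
Your proof is correct in substance and considerably more informative than the paper's, which disposes of both assertions simply by citing Theorem 4.15 in Iorio~\& Iorio \cite{IoIo01}. Working directly on the Fourier side with the multiplier $m(k,h) = h^{-1}(e^{-k^2h}-1) + k^2$ is the transparent way to do this: pointwise vanishing of $m(k,h)$ as $h\to0$, the uniform bound $|m(k,h)|\leq k^2$ (your $2(1+|k|^2)$ is a generous but harmless weakening), and dominated convergence together give \eqref{limsmN}; the observation that $e^{-2k^2t}\leq 1$ makes the $t=0$ series dominate all others, yielding the uniformity in $t$ essentially for free.

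The one real point of friction is the range of $N$. The statement reads ``$N\geq 0$'', but your argument --- and in fact the claim itself --- only works for $N\geq 2$. You identify this honestly (``for $N\geq 2$ this majorant is summable'') and correctly attribute it to the two-derivative loss of $\partial_x^2$. What you do not quite say, and what is worth making explicit, is that this restriction is not an artifact of your method: for $\phi\in X_s\setminus X_{s+2}$ one has $\partial_x^2\phi\notin X_s$ while $h^{-1}(\V(h)-\Id)\phi\in X_s$ for every fixed $h>0$, so the difference fails to lie in $X_{s-N}$ for $N<2$ and the limit in \eqref{limsmN} at $t=0$ cannot even be posed there. Thus the corollary as stated is false for $N\in\{0,1\}$; the paper only ever invokes it with $N=2$ (in the proof of Lemma~\ref{lemstrong}, passing from $\|\cdot\|_{s-3}$ to $\|\cdot\|_{s-1}$), so nothing downstream is affected. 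Your reading of \eqref{unifcota} as an operator bound from $X_s$ into $X_{s-2}$ is the correct interpretation. One minor remark: the bound $1-e^{-x}\leq x$ requires $x\geq 0$, so your estimates implicitly assume $h>0$; since $\V$ is only a forward semigroup, the ``$0<|h|\ll 1$'' in \eqref{unifcota} should be read as $0<h\ll1$, and with that reading your argument goes through without change.
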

\begin{proof}
See Theorem 4.15 in \cite{IoIo01}. The second assertion follows immediately from \eqref{limsmN}.
\end{proof}

\begin{corollary}[regularity inequality]
\label{corregineq}
For all $r \in \R$ and $\delta \geq 0$ there exists a uniform constant $K_\delta > 0$ depending only on $\delta$ such that
\begin{equation}
\label{regineq}
\| \V(t) u \|_{r + \delta} \leq K_\delta \Big[ 1 + \Big( \frac{\delta}{2t}\Big)^{\delta}\Big]^{1/2} \| u \|_r,
\end{equation}
for all $u \in \Hper^r([0,L])$, $t >0$.
\end{corollary}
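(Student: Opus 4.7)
The plan is to work directly with the Fourier-multiplier representation of the heat semigroup supplied by Theorem \ref{teoheatSG}, namely $\V(t)u = \big(e^{-k^2 t} \widehat{u}(k)\big)^{\vee}$. By Parseval in the periodic setting,
\[
\| \V(t) u \|_{r+\delta}^2 = L \sum_{k \in \Z} (1+|k|^2)^{r+\delta} e^{-2k^2 t} |\widehat{u}(k)|^2 = L \sum_{k \in \Z} \mu_k(t)\, (1+|k|^2)^r |\widehat{u}(k)|^2,
\]
where $\mu_k(t) := (1+|k|^2)^\delta e^{-2k^2 t}$. Hence it suffices to establish the uniform symbol bound
\[
\sup_{k \in \Z} \mu_k(t) \leq K_\delta^2 \Big[ 1 + \Big( \tfrac{\delta}{2t}\Big)^{\delta}\Big]
\]
for some constant depending only on $\delta$; inserting this bound into the sum above and extracting the square root yields \eqref{regineq}.

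The remaining step is an elementary calculus estimate. I would introduce $\phi(x) := (1+x)^\delta e^{-2xt}$ on $x \geq 0$, so that $\mu_k(t) = \phi(k^2)$, and locate its maximum. Differentiating produces the unique critical point $x^\star = \delta/(2t) - 1$. If $\delta/(2t) \leq 1$, then $x^\star \leq 0$, $\phi$ is non-increasing on $[0,\infty)$, and $\sup_{x \geq 0}\phi(x) = \phi(0) = 1$. If $\delta/(2t) > 1$ (equivalently $2t < \delta$), then
\[
\phi(x^\star) = \Big( \tfrac{\delta}{2t}\Big)^{\delta} e^{-\delta + 2t} \leq \Big( \tfrac{\delta}{2t}\Big)^{\delta},
\]
since the exponential factor is at most $1$ in this regime. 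Combining both cases gives $\sup_{x \geq 0} \phi(x) \leq 1 + (\delta/(2t))^\delta$, so the desired inequality holds in fact with $K_\delta = 1$.

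I do not expect any serious obstacle: the result is a textbook smoothing estimate for the heat semigroup, and the Fourier-multiplier reduction makes the problem entirely pointwise in the frequency $k$. The only point that requires care is the case split at $2t = \delta$ in the optimization of $\phi$, where one must notice that the ``dangerous'' factor $e^{-\delta+2t}$ arising from the critical-point value is automatically bounded by $1$ precisely in the regime $2t < \delta$; this cancellation is what keeps the constant independent of $t$ (up to the singular factor $(\delta/2t)^\delta$ that is already accounted for on the right-hand side).
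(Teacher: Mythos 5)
Your proof is correct, and it is self-contained where the paper is not: the paper proves nothing here, but simply cites Theorem 4.17 of Iorio and Iorio (specialized to $q=0$, $\mu=1$), a more general smoothing estimate covering a whole class of dissipative linear equations. Your argument carries out the underlying Fourier-multiplier computation directly for the heat symbol $e^{-k^2 t}$: the reduction to the pointwise bound on $\mu_k(t) = (1+|k|^2)^\delta e^{-2k^2 t}$ is exactly right, the calculus optimization of $\phi(x) = (1+x)^\delta e^{-2xt}$ with the case split at $x^\star = \delta/(2t)-1$ is handled cleanly, and the observation that $e^{-\delta+2t}\leq 1$ in the regime $x^\star>0$ is precisely what controls the constant. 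You even obtain the sharper conclusion that one may take $K_\delta = 1$, which is more than the statement asks for. The only minor point you might flag explicitly is the degenerate case $\delta = 0$: there $x^\star = -1 < 0$, so the sup is $\phi(0)=1$ and the bound reduces to the contractivity of $\V(t)$, with the factor $(\delta/2t)^\delta$ interpreted as a nonnegative quantity (any convention for $0^0$ suffices). In short, this is the textbook argument made explicit; it trades the paper's appeal to a more general reference for a shorter, elementary, and slightly sharper proof of exactly the inequality needed.
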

\begin{proof}
This is a particular case, with $q = 0$ and $\mu =1$, of Theorem 4.17 in \cite{IoIo01}.
\end{proof}

\subsection{The Cauchy problem for viscous balance laws}
Let us consider the Cauchy problem for the viscous balance law \eqref{VBL} in $X_s$, $s > 3/2$. It reads:
\begin{equation}
\label{CpVBL}
\begin{aligned}
u_t - u_{xx} &= g(u) - f'(u)u_x,\\
u(0) &= \phi,
\end{aligned}
\end{equation}
for some initial condition $u(0) = \phi \in X_s$. Assuming $f \in C^2$, $g \in C^1$, we define $F \in C^1(\R^2)$ as 
\[
F(u,p) := g(u) - f'(u)p, \qquad (u,p) \in \R^2.
\]
Hence, the Cauchy problem \eqref{CpVBL} can be recast as
\begin{equation}
\label{CpVBL2}
\begin{aligned}
u_t - u_{xx} &= F(u,u_x),\\
u(0) &= \phi,
\end{aligned}
\end{equation}
with $\phi \in X_s$. Upon application of the variation of constants formula we arrive at the integral equation,
\begin{equation}
\label{inteq}
\cA u := u(t) = \V(t) \phi + \int_0^t \V(t-\tau) F(u,u_x) \, d \tau.
\end{equation}

In order to prove existence and uniqueness of solutions to the Cauchy problem \eqref{CpVBL2} we follow the standard blueprint (see, e.g. Taylor \cite{TayPDE3-2e}, chapter 15): (i) the linear part of the equation generates a $C_0$ semigroup in a certain Banach space $X$ (this step has been already verified by Theorem \ref{teoheatSG}); (ii), the nonlinear term $F$ is locally Lipschitz from $X$ to another Banach space $Y$; and, (iii), the operator $\cA$ is a contraction in a closed ball in $C([0,T];X)$ for $T$ sufficiently small yielding, upon application of Banach's fixed point theorem, a solution to the integral equation \eqref{inteq}.

The following lemmata are devoted to verify these steps in the context of periodic Sobolev spaces and in the spirit of the analysis of Iorio and Iorio \cite{IoIo01} for nonlinear equations.

\begin{lemma}
\label{lemFlip}
For any $s > 3/2$ and assuming $f \in C^2$, $g \in C^1$, then $F = F(u,u_x)$ is locally Lipschitz from $\Hsper$ to $\Hper^{s-1}$. More precisely, for any 
\[
u,v \in \overline{B_M} = \{ w \in \Hsper \, : \, \| w \|_s \leq M \} \subset \Hsper,
\]
with $M > 0$ fixed but arbitrary, there holds the estimate
\begin{equation}
\label{E1}
\| F(u,u_x) - F(v,v_x) \|_{s-1} \leq L_s(\| u \|_s, \| v \|_s) \| u - v \|_s,
\end{equation}
where $L_s : [0,\infty) \times [0,\infty) \to (0,\infty)$, $L_s = L_s(\varrho_1,\varrho_2)$, is a continuous, positive function and non-decreasing with respect to each argument. In particular, there holds the estimate
\begin{equation}
\label{E2}
\| F(u,u_x) \|_{s-1} \leq L_s(\| u \|_s, 0) \| u  \|_s,
\end{equation}
for all $u \in \overline{B_M}$.
\end{lemma}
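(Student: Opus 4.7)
The plan is to reduce \eqref{E1} to the Banach algebra property of $\Hper^{s-1}$, which is valid since $s-1>1/2$. I would begin with the algebraic decomposition
\begin{equation*}
F(u,u_x) - F(v,v_x) = \bigl(g(u) - g(v)\bigr) - \bigl(f'(u) - f'(v)\bigr)u_x - f'(v)(u_x - v_x),
\end{equation*}
and then apply the fundamental theorem of calculus to rewrite the two scalar differences as
\begin{equation*}
g(u)-g(v) = (u-v)\!\int_0^1\!\! g'(v+t(u-v))\,dt, \quad f'(u)-f'(v) = (u-v)\!\int_0^1\!\! f''(v+t(u-v))\,dt,
\end{equation*}
where the second identity relies crucially on the hypothesis $f\in C^2$.

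Next, I would estimate each of the three resulting terms in $\Hper^{s-1}$ using the algebra inequality $\|\phi\psi\|_{s-1}\leq C_s\|\phi\|_{s-1}\|\psi\|_{s-1}$, the elementary bound $\|u_x\|_{s-1}\leq \|u\|_s$ (and likewise for $v_x-u_x$), and the embedding $\Hsper\hookrightarrow \Hper^{s-1}$ for the outer factor $(u-v)$. The key technical step is to control, in $\Hper^{s-1}$, the composition factors
\begin{equation*}
\int_0^1 g'(v+t(u-v))\,dt,\qquad \int_0^1 f''(v+t(u-v))\,dt,\qquad f'(v),
\end{equation*}
by a continuous, non-decreasing function of $\|u\|_s$ and $\|v\|_s$. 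For this I would use the embedding $\Hsper\hookrightarrow C^0_\mathrm{per}$ to bound each argument of the compositions in the sup norm by $\max(\|u\|_s,\|v\|_s)$, and then combine the algebra property with a Moser-type composition estimate to transfer the Sobolev regularity through composition with $g'$, $f''$, and $f'$. Assembling the three bounds gives \eqref{E1} with
\begin{equation*}
L_s(\varrho_1,\varrho_2) = C_s\bigl(\Gamma_g(\varrho_1,\varrho_2) + \Gamma_f(\varrho_1,\varrho_2)(\varrho_1+\varrho_2)\bigr),
\end{equation*}
where $\Gamma_g,\Gamma_f$ are the continuous, non-decreasing upper bounds produced by the composition estimate for $g'$ and $f'',f'$ respectively; continuity and monotonicity of $L_s$ are manifest from this construction.

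The main obstacle will be the Moser-type composition estimate in $\Hper^{s-1}$ for arbitrary $s>3/2$: the classical form of this bound requires the composed function to have smoothness commensurate with the Sobolev exponent, while here only $g'\in C^0$ and $f''\in C^0$ are available. In practice one works in the regime where $s-1$ is not large compared to the available regularity of $f$ and $g$ (and for the eventual application to Theorem \ref{mainthem} the stronger hypotheses $f\in C^4$, $g\in C^3$ are in force, which is what enables the $C^2$-smoothness of the data-solution map in Theorem \ref{teolocale}). The bound \eqref{E2} then follows from \eqref{E1} by specialization to $v=0$, after absorbing the constant contribution $\|F(0,0)\|_{s-1}=|g(0)|\,\sqrt{L}$ into a slightly enlarged $L_s(\varrho_1,0)$.
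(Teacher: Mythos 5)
Your approach is essentially the same as the paper's: decompose $F(u,u_x)-F(v,v_x)$ into the three terms $(g(u)-g(v))$, $(f'(u)-f'(v))v_x$ (or $u_x$), and $f'(\cdot)(u_x-v_x)$, then use the Banach-algebra property of $\Hper^{s-1}$ together with composition estimates. The paper short-circuits your FTC step: it simply asserts the Sobolev-level Lipschitz bounds $\|h(u)-h(v)\|_{s-1}\le L_h\|u-v\|_{s-1}$ for $h\in\{f',g\}$, justifying them with the (incorrect as literally stated) remark that continuity on a compact set implies local Lipschitzness. Your route via $h(u)-h(v)=(u-v)\int_0^1 h'(v+t(u-v))\,dt$ plus a Moser composition bound is the careful way to actually prove what the paper asserts, and you correctly flag the real subtlety: for arbitrary $s>3/2$ those composition estimates in $\Hper^{s-1}$ need smoothness of $f$ and $g$ commensurate with $s-1$, which $f\in C^2$, $g\in C^1$ does not supply once $s$ is large. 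The paper does not address this; the lemma is really only invoked at $s=2$ downstream. One small flaw in your write-up: you cannot literally absorb $\|F(0,0)\|_{s-1}=|g(0)|\sqrt{L}$ into $L_s(\varrho_1,0)\varrho_1$, since that product vanishes as $\varrho_1\to 0$. Either one states \eqref{E2} with an additive constant, or one invokes $g(0)=0$ (which holds in the applications via \hyperref[A2]{(A$_2$)}); the paper silently does the latter.
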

\begin{proof}
Let $u,v \in \overline{B_M}$. Since for each $s > 1/2$, $\Hsper$ is a Banach algebra (see Theorem 3.200 in \cite{IoIo01}), there exists a constant $C_s \geq 0$ depending only on $s$ such that
\[
\begin{aligned}
\| F(u,u_x) - F(v,v_x) \|_{s-1} 
&\leq \| g(u) - g(v) \|_{s-1} + C_s \| f'(u) \|_{s-1} \| u_x - v_x \|_{s-1} + \\ &\quad + C_s \|f'(v) - f'(u) \|_{s-1} \| v_x \|_{s-1}\\
&\leq \| g(u) - g(v) \|_{s-1} + C_s \| f'(u) \|_{s-1} \| u - v \|_{s} + \\ &\quad + C_s \|f'(v) - f'(u) \|_{s-1} \| v \|_{s}.
\end{aligned}
\]

In view that $s > 3/2$ we have $\Hsper \subset \Huper$ and by Sobolev's inequality there holds $|u| \leq \| u \|_{L^\infty} \leq 2 \| u \|_0^{1/2}  \| u_x \|_0^{1/2} \leq 2 \|u\|_1 \leq 2 \|u\|_s \leq 2M$ a.e. in $x \in [0,L]$ for all $u \in \overline{B_M}$. Since $f(u), g(u)$ and $f'(u)$ are continuous in the compact set $[-2M,2M]$, then they are locally Lipschitz and there exist uniform constants $L_f, L_g > 0$, depending only on $s$ and $M$, such that 
\begin{equation}
\label{Lipest}
\begin{aligned}
\| f'(u) - f'(v) \|_{s-1} &\leq L_f \|u - v \|_{s-1},\\
\| g(u) - g(v) \|_{s-1} &\leq L_g \|u - v \|_{s-1},\\
\| g'(u) - g'(v) \|_{s-1} &\leq L_g \|u - v \|_{s-1},\\
\| f'(u) \|_{s-1} &\leq L_f \| u \|_{s-1} + |f'(0)|,\\
\| g'(u) \|_{s-1} &\leq L_g \| u \|_{s-1} + |g'(0)|,
\end{aligned}
\end{equation}
for all $u,v \in \overline{B_M}$. Therefore,
%
%
%
\[
\begin{aligned}
\| F(u,u_x) - F(v,v_x) \|_{s-1} &\leq \big[ L_g + C_s \| f'(u) \|_{s-1}+ C_s \| v \|_{s-1} \big]  \| u - v \|_{s-1}\\
&\leq \big[ L_g + C_s (L_f \| u \|_s + |f'(0)|+ \| v \|_s) \big]  \| u - v \|_{s}\\
&\leq L_s(\|u\|_s, \|v \|_s) \| u - v \|_{s},
\end{aligned}
\]
since from definition $\| u \|_{s-1} \leq \| u \|_s$ for all $u$, and where we have defined\footnote{for later use (see the proof of Lemma \ref{leminvert} below) we have incorporated $|g'(0)|$ into the definition of this upper bound $L_s(\cdot,\cdot)$.}
\begin{equation}
\label{defLs}
L_s(\varrho_1,\varrho_2) := L_g + C_s \big( (L_f + L_g) \varrho_1 + \varrho_2 + |f'(0)| + |g'(0)| \big) > 0, 
\end{equation}
for all $(\varrho_1,\varrho_2) \in [0,\infty) \times [0,\infty)$. Clearly, $L_s(\cdot,\cdot)$ is continuous and non-decreasing with respect to each argument. This yields \eqref{E1} and the lemma is proved.
\end{proof}

Let $\phi \in X_s$. For any $\alpha > 0$, fixed but arbitrary, and for $T > 0$ to be chosen later, let us define
\[
Z_{\alpha,T} := \Big\{ u \in C([0,T];X_s) \, : \, \sup_{t \in [0,T]} \| u(t) - \phi \|_s \leq \alpha \Big\}.
\]
Clearly, $Z_{\alpha,T} \subset C([0,T];X_s)$ and it is closed under the norm
\[
\| u \|_{C([0,T]; X_s)} := \sup_{t \in [0,T]} \| u(t) \|_s.
\]

Next result establishes the conditions under which the operator $\cA$ defined in \eqref{inteq} is a contraction mapping on $Z_{\alpha,T}$, yielding the existence and uniqueness of a mild solution to \eqref{CpVBL2}.

\begin{lemma}
\label{lemmild}
Let $s > 3/2$ and $\phi \in X_s$. Then there exist $T > 0$ and a unique mild solution $u \in C([0,T];X_s)$ to the Cauchy problem \eqref{CpVBL2} (that is, to the integral equation \eqref{inteq}). Moreover, the data-solution map $\phi \mapsto u$ is continuous.
\end{lemma}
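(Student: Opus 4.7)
The plan is to apply Banach's fixed point theorem to the operator $\cA$ defined in \eqref{inteq} on the closed set $Z_{\alpha,T}$. The key technical device is the regularity inequality of Corollary \ref{corregineq}, which is exactly what allows us to recover the one derivative lost in the nonlinear term: by Lemma \ref{lemFlip}, $F(u,u_x) \in X_{s-1}$, while we need to measure $\cA u$ in $X_s$, and the smoothing of $\V(t-\tau)$ used with $r = s-1$, $\delta = 1$ bridges exactly this gap, at the price of a weakly singular factor $(1+1/(2(t-\tau)))^{1/2}$ whose integral is still $O(\sqrt{t})$.

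First I would fix $M := \|\phi\|_s + \alpha$, so that every $u \in Z_{\alpha,T}$ satisfies $\|u(t)\|_s \le M$. Applying \eqref{regineq} with $r = s-1$, $\delta = 1$ and then the growth bound \eqref{E2} from Lemma \ref{lemFlip} gives, for all $u \in Z_{\alpha,T}$,
\[
\Big\| \int_0^t \V(t-\tau) F(u,u_x)\,d\tau \Big\|_s \le K_1 L_s(M,0)\,M \int_0^t \Bigl[1 + \tfrac{1}{2(t-\tau)}\Bigr]^{1/2} d\tau,
\]
and the right-hand side is $O(\sqrt{t})$ as $t \to 0^+$. Combined with the strong continuity of the heat semigroup, $\|\V(t)\phi - \phi\|_s \to 0$ (Theorem \ref{teoheatSG}), this allows me to choose $T$ small enough that $\|\cA u(t) - \phi\|_s \le \alpha$ uniformly on $[0,T]$, proving that $\cA$ maps $Z_{\alpha,T}$ into itself. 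For the contraction property, the identical estimate combined with the Lipschitz bound \eqref{E1} yields
\[
\|\cA u - \cA v\|_{C([0,T];X_s)} \le K_1 L_s(M,M) \Big( \int_0^T \Bigl[1 + \tfrac{1}{2(T-\tau)}\Bigr]^{1/2} d\tau \Big) \|u-v\|_{C([0,T];X_s)},
\]
so shrinking $T$ further gives a strict contraction. Banach's theorem then produces the unique fixed point $u \in C([0,T];X_s)$, which is the desired mild solution.

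For continuous dependence on the initial data, let $u_1, u_2$ be the solutions corresponding to $\phi_1, \phi_2 \in X_s$ with norms bounded by some common constant. Subtracting the integral equations and applying the same combination of \eqref{regineq} and \eqref{E1} gives
\[
\|u_1(t) - u_2(t)\|_s \le \|\phi_1 - \phi_2\|_s + K_1 L_s \int_0^t \Bigl[1 + \tfrac{1}{2(t-\tau)}\Bigr]^{1/2} \|u_1(\tau) - u_2(\tau)\|_s\,d\tau,
\]
and a standard Gronwall argument adapted to the weakly singular (but integrable) kernel closes the estimate, giving
\[
\|u_1 - u_2\|_{C([0,T];X_s)} \le C \|\phi_1 - \phi_2\|_s
\]
for some constant $C = C(T,M)$.

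The main obstacle I foresee is not conceptual but bookkeeping: matching the loss of one derivative in the nonlinearity against the integrable singularity in the smoothing estimate, and making sure the constants depending on $M$ can be absorbed by shrinking $T$. The technique is standard for semilinear parabolic equations (cf. the analogous treatment in Iorio and Iorio \cite{IoIo01}), and the refined form of the estimates, in particular keeping track of the explicit dependence of $L_s(\varrho_1,\varrho_2)$ on its arguments, will be reused later to obtain the $C^2$ smoothness of the data-solution map in Section \ref{secwellpos}.
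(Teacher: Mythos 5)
Your proposal is correct and takes essentially the same route as the paper: a Banach fixed point argument for $\cA$ on $Z_{\alpha,T}$, with the regularity inequality \eqref{regineq} (used with $\delta = 1$, $r = s-1$) absorbing the one-derivative loss from the nonlinearity, followed by a Gronwall estimate for continuous dependence. The one step you leave implicit, and which the paper spends a full paragraph on via the Dominated Convergence Theorem, is that $\cA u$ is actually \emph{continuous in $t$} (so that $\cA u \in C([0,T];X_s)$); your uniform bound $\|\cA u(t) - \phi\|_s \le \alpha$ alone does not show $\cA(Z_{\alpha,T}) \subset Z_{\alpha,T}$, since membership in $Z_{\alpha,T}$ requires continuity, and the weakly singular kernel makes this a point worth checking rather than waving through.
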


\begin{proof}
First, we verify that if $u \in Z_{\alpha,T}$ then $\cA u \in C([0,T];X_s)$. Indeed, for all $0 < t_1 < t_2 < T$ we have
\begin{equation}
\label{difA}
\begin{aligned}
\| \cA u(t_1) - \cA u(t_2) \|_s &\leq \| (\V(t_1) - \V(t_2) ) u \|_s + \int_0^{t_1} \| (\V(t_1-\tau) - \V(t_2-\tau)) F(u,u_x) \|_s \, d\tau\\
& \quad + \int_{t_1}^{t_2} \| (\V(t_1-\tau) - \V(t_2-\tau)) F(u,u_x) \|_s \, d\tau.
\end{aligned}
\end{equation}

Since $\V(t)$ is a $C_0$-semigroup, $\| (\V(t_1) - \V(t_2) ) u \|_s\to 0$ as  $t_2\to t_1$. In order to control the second term in \eqref{difA}, we apply  inequality \eqref{regineq} with $\delta = 1$, $r = s-1$, $C = K_1 > 0$, and estimate \eqref{E2} (inasmuch as $Z_{\alpha,T} \subset \overline{B_M}$ with $M = \alpha + \| \phi \|_s$); this yields,
\[
\begin{aligned}
\| (\V(t_1-\tau) - \V(t_2-\tau)) F(u,u_x) \|_s \leq 2C \Big[ 1 + \frac{1}{2(t_1-\tau)} \Big]^{1/2} \!\! \sup_{t \in [0,T]} L_s(\| u(t) \|_s,0) \| u(t) \|_s,
\end{aligned}
\]
for all $0 < \tau \leq t_1$. The function on the right side of last inequality is integrable in $\tau \in (0,t_1)$. Therefore, by the Dominated Convergence Theorem,
\[
\lim_{t_2 \to t_1} \int_0^{t_1} \| (\V(t_1-\tau) - \V(t_2-\tau)) F(u,u_x) \|_s \, d\tau = 0.
\]
Analogously, for the second integral in \eqref{difA} we have the estimate
\[
\|\V(t_2-\tau) F(u,u_x) \|_s \leq C \Big[ 1 +  \frac{1}{2(t_2-\tau)} \Big]^{1/2} L_s(\| u(\tau) \|_s,0) \| u(\tau) \|_s,
\]
for all $\tau \in (t_1,t_2)$.  Clearly, since $u \in Z_{\alpha,T}$ then we have $\| u(\tau)\|_s \leq \alpha + \| \phi \|_s$ and therefore
\[
\begin{aligned}
\int_{t_1}^{t_2} \| \V(t_2-\tau) F(u,u_x) \|_s \, d\tau \leq C L_s(\alpha + \| \phi  \|_s,0) ( \alpha + \| \phi \|_s) \big( t_2 - t_1 - \sqrt{2(t_2-t_1)}\big) \to 0,
\end{aligned}
\]
as $t_2 \to t_1$. This shows that $\cA u(t) \in X_s$ for all $t \in [0,T]$ and that $\cA u \in C([0,T];X_s)$.

Next, we choose $T> 0$ small enough such that $\cA(Z_{\alpha,T}) \subset Z_{\alpha,T}$ and that $\cA$ is a contractive mapping. First, note that since $\V(t)$ is a $C_0$-semigroup we can choose $T_1 > 0$ such that $\| \V(t) \phi - \phi \|_s < \alpha/2$ for all $t \in [0,T_1]$. Now, if $u \in Z_{\alpha,T}$ then we have the  estimate (see \eqref{E2}),
\[
\begin{aligned}
\Big\| \int_0^t \V(t - \tau) F(u, u_x) \, d\tau \Big\|_s &\leq \int_0^t \| \V(t-\tau) F(u,u_x) \|_s \, d\tau \\
&\leq C L_s(\alpha + \| \phi  \|_s,0) ( \alpha + \| \phi \|_s) (T + \sqrt{2 T}) \\
&< \tfrac{1}{2} \alpha,
\end{aligned}
\]
provided that we choose $T < T_1$ small enough. This shows that $\cA(Z_{\alpha,T}) \subset Z_{\alpha,T}$. Finally, in order to show that $\cA$ is a contraction for some (possibly smaller) $T > 0$, let $u,v \in Z_{\alpha,T}$. Similar arguments yield the estimate
\[
\begin{aligned}
\| \cA u(t) - \cA v(t) \|_s &\leq \int_0^t \| \V(t-\tau)(F(u,u_x) - F(v,v_x)) \|_s \, d\tau \\
&\leq C \int_0^t \Big[ 1 + \frac{1}{2(t - \tau)} \Big]^{1/2} \| F(u,u_x) - F(v, v_x) \|_s \, d\tau \\
&\leq C L_s( \alpha + \| \phi \|_s, \alpha + \| \phi \|_s )  (T + \sqrt{2 T}) \sup_{t \in [0,T]} \| u(t) - v(t) \|_s\\
&< \tfrac{1}{2} \| u - v \|_{C([0,T];X_s)},
\end{aligned}
\]
where for
\begin{equation}
\label{C}
C_\phi :=  C L_s( \alpha + \| \phi \|_s, \alpha + \| \phi \|_s ) > 0,
\end{equation}
we choose $T$ sufficiently small such that 
\begin{equation}
\label{laii}
C_\phi (T + \sqrt{2 T}) < \frac{1}{2}.
\end{equation}
Notice that $T$ depends on $\| \phi \|_s$. Hence, we conclude that there exists a small ${T} ={T}(\| \phi \|_s) > 0$ such that $\cA(Z_{\alpha,T}) \subset Z_{\alpha,T}$ and $\cA$ is a contraction on $Z_{\alpha,T}$. By Banach's fixed point theorem, there exists a unique fixed point $u \in Z_{\alpha,T}$ of $\cA$ that solves \eqref{inteq}. 

Finally, to show the continuity of the data-solution map let $u$ and $v$ in $C([0,T];X_s)$ be the solutions to the Cauchy problem with initial data $u(0) = \phi$ and $v(0) = \psi$, respectively. Then, using the regularity estimate \eqref{regineq} it is easy to show that
\[
\begin{aligned}
\| u(t) - v(t) \|_s &\leq \| \cV(t) \phi - \cV(t) \psi \|_s + \int_0^t \| \cV(t-\tau) (F(u,u_x) - F(v,v_x)) \|_s \, d\tau\\
&\leq \| \phi -  \psi \|_s + (T + \sqrt{2T}) L_s(M_s,M_s) \int_0^t \| u(\tau) - v(\tau) \|_s \, ds,
\end{aligned}
\]
with $M_s := \max \,  \big\{ \sup_{t \in [0,T]} \| u(t) \|_s, \sup_{t \in [0,T]} \| u(t) \|_s \big\}$. Gronwall's inequality then yields 
\[
\| u(t) - v(t) \|_s \leq C_{s,T} \| \phi -  \psi \|_s,
\]
for all $t \in [0,T]$ with a constant $C_{s,T} > 0$ depending only on $s$ and $T$. The lemma is proved.
\end{proof}

It remains to verify that the unique solution from Lemma \ref{lemmild} is, in fact, a strong solution to the Cauchy problem \eqref{CpVBL}.
%
%

\begin{lemma}
\label{lemstrong}
Under the assumptions of Lemma \ref{lemmild}, the unique mild solution $u \in C([0,T];X_s)$ to \eqref{inteq} satisfies $u \in C^1([0,T];\Hper^{s-2}([0,L]))$ and, therefore, it is a strong solution to the Cauchy problem \eqref{CpVBL}.
\end{lemma}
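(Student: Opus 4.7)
The plan is to differentiate the integral equation \eqref{inteq} directly in time, viewed in the weaker space $X_{s-2}$, and to identify the result with the right-hand side of \eqref{CpVBL2}. Setting $N(\tau) := F(u(\tau), u_x(\tau))$ and exploiting the semigroup law $\V(h)\V(t-\tau) = \V(t+h-\tau)$, the mild formula yields the key identity
$$u(t+h) - u(t) = (\V(h) - \Id)\, u(t) + \int_t^{t+h} \V(t+h-\tau)\, N(\tau)\, d\tau, \qquad h > 0,$$
together with the analogous identity for the left increment $u(t) - u(t-h)$, in which $u(t)$ on the right is replaced by $u(t-h)$ and the remainder integral becomes $\int_{t-h}^{t} \V(t-\tau)\, N(\tau)\, d\tau$.

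Dividing the first identity by $h$ and letting $h \to 0^+$, the leading term is controlled by Corollary \ref{corheat}: since $u(t) \in X_s = \cD(\partial_x^2)$, we have $h^{-1}(\V(h)-\Id)\, u(t) \to \partial_x^2 u(t)$ in $X_{s-2}$. The remainder can be rewritten after the substitution $\sigma = t+h-\tau$ as $h^{-1}\int_0^h \V(\sigma)\, N(t+h-\sigma)\, d\sigma$; by Lemma \ref{lemFlip} we have $N \in C([0,T]; X_{s-1})$, so the range $N([0,T])$ is compact in $X_{s-1}$, and the strong continuity of $\V(\cdot)$ on this compact set, combined with the uniform continuity of $N$, produces convergence to $N(t)$ in $X_{s-1} \hookrightarrow X_{s-2}$. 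This establishes the right derivative $u_t^+(t) = u_{xx}(t) + F(u, u_x)(t)$ in $X_{s-2}$ for every $t \in [0, T)$.

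For the left derivative at $t \in (0, T]$ the same scheme applies, but one first replaces the shifting data $u(t-h)$ with the fixed vector $u(t)$. The clean way is the splitting
$$\frac{\V(h) - \Id}{h}\, u(t-h) = \frac{\V(h) - \Id}{h}\, u(t) + \frac{\V(h) - \Id}{h}\bigl(u(t-h) - u(t)\bigr),$$
where the first piece converges as before, and the second is controlled by the uniform operator bound \eqref{unifcota}: its $X_{s-2}$-norm is $O\bigl(\|u(t-h) - u(t)\|_s\bigr)$, which vanishes as $h \to 0^+$ thanks to $u \in C([0,T]; X_s)$. Hence the two-sided derivative exists throughout $[0, T]$ (with one-sided limits at the endpoints) and equals $u_{xx} + F(u, u_x)$; continuity of this expression in $X_{s-2}$ follows from $u \in C([0,T]; X_s)$ and the locally Lipschitz mapping property of $F$ provided by Lemma \ref{lemFlip}, giving $u \in C^1([0,T]; X_{s-2})$ and the strong formulation of \eqref{CpVBL}. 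The one subtle point is precisely the swap of shifting data for fixed data in the left-derivative step, and it is resolved cleanly by the uniform bound \eqref{unifcota}.
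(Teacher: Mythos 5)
Your proof is correct and takes a genuinely different, cleaner route than the paper's. The paper differentiates the mild formula directly, producing the three-term decomposition
\begin{equation*}
h^{-1}\big(\V(t+h)-\V(t)\big)\phi - \partial_x^2\big(\V(t)\phi\big)
\;+\; h^{-1}\!\int_0^t\!\big(\V(t+h-\tau)-\V(t-\tau)\big)N\,d\tau - \partial_x^2\!\int_0^t\!\V(t-\tau)N\,d\tau
\;+\; h^{-1}\!\int_t^{t+h}\!\V(t+h-\tau)N\,d\tau - N(t),
\end{equation*}
and the middle piece is the expensive one: it is controlled by the regularity inequality \eqref{regineq} (to produce an $L^1_\tau$ majorant of the form $(t-\tau)^{-1/2}$), the uniform bound \eqref{unifcota}, and the Dominated Convergence Theorem. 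Your use of the semigroup ``restart'' identity $u(t+h)=\V(h)u(t)+\int_t^{t+h}\V(t+h-\tau)N(\tau)\,d\tau$ collapses the first two pieces of the paper's decomposition into the single term $(\V(h)-\Id)u(t)$, for which Corollary \ref{corheat} (at $t=0$, $N=2$) gives the limit $\partial_x^2 u(t)$ directly; the singular-kernel estimate and dominated convergence are never needed. Your handling of the left derivative by first swapping $u(t-h)$ for the fixed vector $u(t)$ and controlling the error $h^{-1}(\V(h)-\Id)(u(t-h)-u(t))$ via the uniform operator bound \eqref{unifcota} is exactly the subtle point that makes the two-sided argument go through, and it is correct. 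The remaining ingredients (continuity of $N=F(u,u_x)$ into $X_{s-1}$ from Lemma \ref{lemFlip}, compactness/uniform-continuity argument for the Duhamel tail — the paper achieves the same with a mean-value-theorem-for-integrals shortcut, equivalent in spirit — and continuity of $\partial_x^2 u + N$ into $X_{s-2}$) are all in place. Net effect: the same conclusion with less machinery, at the cost of stating the one-sided derivatives separately.
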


\begin{proof}
It suffices to show that
\[
\lim_{h \to 0} \| h^{-1} (u(t+h) - u(t)) - \partial_x^2 u - F(u, u_x) \|_{s-2} = 0.
\]
To that end, write
\begin{equation}
\label{ABC}
\begin{aligned}
h^{-1} (u(t+h) &- u(t)) - \partial_x^2 u - F(u, u_x) = h^{-1} (\V(t+h) - \V(t)) \phi - \partial_x^2 (\V(t) \phi) \\
&+ h^{-1} \int_0^t (\V(t+h-\tau) - \V(t-\tau)) F(u, u_x)(\tau) \, d \tau - \partial^2_x \int_0^t \V(t-\tau) F(u, u_x)(\tau) \, d\tau \\
&+ h^{-1} \int_t^{t+h} \V(t+h-\tau) F(u,u_x)(\tau) \, d \tau - F(u, u_x)(t).
\end{aligned}
\end{equation}
First, note that Corollary \ref{corheat} immediately implies that
\[
\lim_{h \to 0} \| h^{-1} (u(t+h) - u(t)) - \partial_x^2 (\V(t) \phi)  \|_{s-2} = 0.
\]
The $\| \cdot \|_{s-2}$-norm of the last term in \eqref{ABC} is clearly bounded above by
\[
h^{-1}\int_t^{t+h} R(\tau) \, d \tau,
\]
where $R(\tau) := \| \V(t+h-\tau) F(u,u_x)(\tau) - F(u, u_x)(t) \|_{s-2}$. $R$ is a continuous function of $\tau \in (t, t + h)$ and, hence, there exists some $\vartheta \in (t, t + h)$ for which
\[
R(\vartheta) = h^{-1}\int_t^{t+h} R(\tau) \, d \tau.
\]
Since $\vartheta \to t$ as $h \to 0$, by continuity of the semigroup we have,
\[
\lim_{h \to 0} R(\vartheta) = \lim_{h \to 0} \| \V(t+h-\vartheta) F(u,u_x)(\vartheta) - F(u, u_x)(t) \|_{s-2} = 0.
\]
This yields
\[
0 \leq \lim_{h \to 0} \Big\| h^{-1} \int_t^{t+h} \V(t+h-\tau) F(u,u_x)(\tau) \, d \tau - F(u, u_x)(t) \Big\|_{s-2} \leq \lim_{h \to 0} R(\vartheta) = 0.
\]

Finally, apply \eqref{regineq}, \eqref{unifcota} and \eqref{E2} to observe that, for all $ 0 < \tau < t$ and all $|h|$ small, there holds the estimate
\[
\begin{aligned}
\big\| h^{-1} (\V(t+h &-\tau) - \V(t-\tau)) F(u, u_x)(\tau) - \partial^2_x \big( \V(t-\tau) F(u, u_x)(\tau)\big) \big\|_{s-2} =\\
&\leq \| \V(t-\tau)( h^{-1} (\V(h) - \Id) - \partial^2_x )F(u, u_x)(\tau) \|_{s-2}\\
&\leq C \Big[ 1 + \frac{1}{2(t-\tau)}\Big]^{1/2} \| h^{-1} (\V(h) - \Id) - \partial^2_x \| \| F(u, u_x)(\tau) \|_{s-3}\\
&\leq C \overline{C} \Big[ 1 + \frac{1}{2(t-\tau)}\Big]^{1/2}  \| F(u, u_x)(\tau) \|_{s-1}\\
&\leq C \overline{C} L_s(\sup_{\tau \in(0,t)} \| u(\tau) \|_s,0)) \sup_{\tau \in (0,t)} \| u(\tau) \|_s \Big[ 1 + \frac{1}{2(t-\tau)}\Big]^{1/2}.
\end{aligned}
\]
Once again, the right hand side of last inequality is integrable in $\tau \in (0,t)$.  Corollary \ref{corheat} then yields
\[
\big\| (h^{-1}(\V(h) - \Id) - \partial^2_x )F(u, u_x)(\tau) \big\|_{s-2} \to 0,
\]
as $h \to 0$, uniformly in $\tau \in (0,t)$. Thus, by the Dominated Convergence Theorem, we conclude that
\[
\lim_{h \to 0} \Big\| h^{-1} \int_0^t (\V(t+h-\tau) - \V(t-\tau)) F(u, u_x)(\tau) \, d \tau - \partial^2_x \int_0^t \V(t-\tau) F(u, u_x)(\tau) \, d\tau \Big\|_{s-2} = 0.
\]
This shows that $u \in C^1([0,T];\Hper^{s-2}([0,L]))$ and the lemma is proved.
%
%
%
\end{proof}

\subsection{Smoothness of the data-solution map}

\label{secsmooth}

Let $B$ be the ball $B=B_\vep(\phi) = \{ u \in X_s \, : \, \| u - \phi \|_s < \vep \}$ with $\vep > 0$.  Define the map
\begin{equation}
\label{defGamma}
\begin{aligned}
\Gamma &: B \times C([0,T];X_s) \to C([0,T];X_s),\\
\Gamma (\psi, w)(t) &:= w(t) - \V(t) \psi - \int_0^t \V(t-\tau) F(w,w_x) \, d\tau.
\end{aligned}
\end{equation}

For any given $\phi \in X_s$, $s>3/2$, let us denote by $u_\phi \in C([0,T]; X_s)$, 
\begin{equation}
\label{uphi}
u_\phi(t) = \V(t) \phi + \int_0^t \V(t-\tau) F(u_\phi,\partial_x u_\phi ) \, d\tau,
\end{equation}
the unique solution to the Cauchy problem \eqref{CpVBL} with $u_\phi(0)=\phi$. Then, clearly, 
\[
\Gamma(\phi, u_\phi)(t) = 0,
\]
for all $t \in [0,T]$. 

At this point we need to impose further regularity on the functions $f$ and $g$ to guarantee twice Fr\'echet differentiability of the mapping $\Gamma$ in a neighborhood of $(\phi, u_\phi)$.

\begin{lemma}
\label{lemFrechdiff}
Let $f \in C^4(\R)$, $g \in C^3(\R)$ and $s > 3/2$. Then the map $\Gamma : X_s \times C([0,T];X_s) \to C([0,T];X_s)$ defined in \eqref{defGamma} is twice Fr\'echet differentiable in an open neighborhood $B_\vep(\phi) \times B_\delta(u_\phi)$ of $(\phi, u_\phi)$.
\end{lemma}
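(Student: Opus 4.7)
The plan is to decompose $\Gamma$ into its linear and nonlinear parts,
\[
\Gamma(\psi, w)(t) = \bigl( w(t) - \V(t)\psi \bigr) - \cN(w)(t), \qquad \cN(w)(t) := \int_0^t \V(t-\tau) F(w, w_x)(\tau)\, d\tau,
\]
and to treat the two pieces separately. The bracketed part is jointly bounded linear from $X_s \times C([0,T]; X_s)$ into $C([0,T]; X_s)$ (by Theorem \ref{teoheatSG}), hence $C^\infty$-Fréchet smooth, and does not depend on $w$ in a nontrivial way. Therefore the entire task reduces to proving that $\cN \colon C([0,T]; X_s) \to C([0,T]; X_s)$ is twice Fréchet differentiable on an open ball around $u_\phi$.

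The candidate first and second derivatives are computed by formally differentiating under the integral and using the chain rule on $F(u,p) = g(u) - f'(u)p$. Setting $A(w) := g'(w) - f''(w)w_x$, $B(w) := -f'(w)$, $P(w) := g''(w) - f'''(w) w_x$, $Q(w) := -f''(w)$ (and noting $\partial_{pp}F \equiv 0$), the candidates are
\[
D\cN(w)[h](t) := \int_0^t \V(t-\tau)\bigl[ A(w)h + B(w)h_x \bigr]\, d\tau,
\]
\[
D^2\cN(w)[h,k](t) := \int_0^t \V(t-\tau)\bigl[ P(w)\, hk + Q(w)(hk_x + h_xk) \bigr]\, d\tau.
\]

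To verify these are bounded (multi)linear maps into $C([0,T]; X_s)$ I exploit two ingredients in tandem: the Banach algebra structure of $\Hsper$ for $s > 1/2$ (plus the Lipschitz-type bounds \eqref{Lipest}) controls products in $\Hper^{s-1}$, and the parabolic regularity inequality \eqref{regineq} with $\delta = 1$, $r = s-1$ promotes $\Hper^{s-1}$ back to $X_s$ at the cost of the integrable weight $(t-\tau)^{-1/2}$. Concretely, Banach algebra plus \eqref{Lipest} gives $\|A(w)h + B(w)h_x\|_{s-1} \leq \widetilde L_s(\|w\|_s)\|h\|_s$ with $\widetilde L_s$ continuous and non-decreasing, and then \eqref{regineq} integrated in $\tau \in (0,t)$ yields
\[
\sup_{t \in [0,T]} \|D\cN(w)[h](t)\|_s \leq K_1 \widetilde L_s(\|w\|_s)\bigl(T + \sqrt{2T}\bigr)\|h\|_{C([0,T]; X_s)},
\]
and an analogous bound for $D^2\cN(w)$. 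Continuity of $w \mapsto D\cN(w)$ and $w \mapsto D^2\cN(w)$ in the operator norm follows by the same scheme combined with the mean value theorem applied to $A, B, P, Q$.

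Finally, Fréchet differentiability itself is obtained from a quantitative Taylor expansion of the nonlinearity. Under $f \in C^4$, $g \in C^3$, the function $F$ is of class $C^3$ with locally Lipschitz third partial derivatives, and Taylor's formula with integral remainder gives
\[
F(w+h, w_x + h_x) - F(w,w_x) - \bigl( A(w)h + B(w)h_x \bigr) = \cR_2[w,h],
\]
where $\cR_2$ is a quadratic polynomial in $(h,h_x)$ with coefficients depending continuously on $w + \theta h$, $w_x + \theta h_x$. The Banach algebra property and the Sobolev embedding $X_s \hookrightarrow L^\infty$ (valid because $s > 1/2$) yield $\|\cR_2[w,h](\tau)\|_{s-1} \leq C(\|w\|_s, \|h\|_s)\|h\|_s^2$ uniformly on a ball, and a final application of \eqref{regineq} plus integration produces $\|\cN(w+h) - \cN(w) - D\cN(w)[h]\|_{C([0,T]; X_s)} = \bigO(\|h\|^2) = o(\|h\|)$. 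The statement for $D^2\cN$ is obtained identically by Taylor-expanding $DF$ once more, which is precisely why the hypotheses $f \in C^4$, $g \in C^3$ are needed. The main obstacle, anticipated from the structure of $F$, is the loss of one derivative carried by $w_x$ and $h_x$, which prevents the bracketed integrands from belonging to $X_s$; this is resolved uniformly throughout the argument by shifting the missing derivative into the $\Hper^{s-1}$-norm and cashing it back in via the parabolic smoothing inequality \eqref{regineq}.
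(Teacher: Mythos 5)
Your proposal is correct and complete, and fills in the computational details that the paper deliberately omits, but it follows a technically different route. The paper's recommended strategy (stated in its one-line proof) is to compute the G\^ateaux derivatives of $\Gamma$, verify they are continuous in a neighborhood of $(\phi,u_\phi)$, and then invoke the standard principle (Zeidler, \S 4.2, Prop.~4.8) that continuous G\^ateaux differentiability upgrades to Fr\'echet differentiability --- this is a lighter touch because one only needs continuity of the candidate derivatives, not a quantitative remainder bound. You instead exhibit the candidate $D\cN$ and $D^2\cN$ explicitly, and prove Fr\'echet differentiability directly via Taylor's formula with integral remainder, producing $O(\|h\|^2)$ bounds. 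Both routes rest on the same three ingredients (Banach-algebra structure of $\Hsper$ for $s-1>1/2$, the Lipschitz-type Nemytskii estimates \eqref{Lipest}, and parabolic smoothing \eqref{regineq} with $\delta=1$), so the calculations are essentially the same; your version is more self-contained but more laborious. One small inaccuracy worth noting: under $f\in C^4$, $g\in C^3$ one gets $F\in C^3$, but \emph{not} that the third partials of $F$ are locally Lipschitz (that would require $f\in C^5$, $g\in C^4$, since $\partial_{uuu}F=g'''-f''''\,p$). Fortunately your argument never uses that claim --- what you actually exploit, correctly, is the local Lipschitz continuity of the \emph{second} partials $\partial_{uu}F=g''-f'''p$ and $\partial_{up}F=-f''$, which does follow from the stated hypotheses and is exactly what is needed to push the Taylor remainder for $DF$ to $O(\|k\|_s^2)$.
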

\begin{proof}
Follows directly from the regularity of $F(u,u_x) = g(u) - f'(u) u_x$, the definition of the mapping $\Gamma$ and standard properties of the contractive semigroup $\V(t)$. (Recall that the existence of continuous G\^ateaux derivatives in open neighborhoods yields Fr\'echet differentiability; see \cite{ZeidI86}, \S 4.2, Proposition 4.8.) We omit the details.
\end{proof}
\begin{lemma}
\label{leminvert}
Suppose that $f \in C^4(\R)$, $g \in C^3(\R)$. Let $\phi\in X_s$, $s>3/2$, and consider $u_\phi \in C([0,T]; X_s)$, $T>0$, the unique strong solution to  \eqref{CpVBL} given by Lemma \ref{lemstrong}. Then, the operator
\begin{equation}
\label{derivwGamma}
\begin{aligned}
\partial_w \Gamma (\phi, u_\phi) &: C([0,T];X_s) \to C([0,T];X_s),\\
\partial_w \Gamma (\phi, u_\phi) w(t) &= w(t) - \int_0^t \V(t-\tau) \Big[ \big(g'(u_\phi) - f''(u_\phi) \partial_x u_\phi\big) w - f'(u_\phi) w_x \Big] \, d\tau,
\end{aligned}
\end{equation}
is one to one and onto. Moreover, the data-solution map associated to \eqref{CpVBL},
\begin{equation}
\label{datasolmap}
\begin{aligned}
\Upsilon &: X_s \to C([0,T];X_s),\\
\phi &\mapsto \Upsilon(\phi) = u_\phi,
\end{aligned}
\end{equation}
is of class $C^2$.
\end{lemma}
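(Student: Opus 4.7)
The plan is to apply the implicit function theorem to $\Gamma$ at the point $(\phi, u_\phi)$. Lemma \ref{lemFrechdiff} already supplies the $C^2$-regularity of $\Gamma$ on a neighborhood of $(\phi, u_\phi)$, and by construction $\Gamma(\phi, u_\phi) = 0$. The only remaining ingredient is the invertibility of the partial derivative $\partial_w \Gamma(\phi, u_\phi)$ as a bounded linear operator on $C([0,T]; X_s)$; this is the content of the first assertion and will be the main work of the proof.

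First I would rewrite $\partial_w \Gamma(\phi, u_\phi) = \mathrm{Id} - \mathcal{K}$, where $\mathcal{K} : C([0,T]; X_s) \to C([0,T]; X_s)$ is the linear Volterra-type operator
\[
\mathcal{K} w(t) = \int_0^t \V(t-\tau)\bigl[ A(\tau) w(\tau) + B(\tau) \partial_x w(\tau) \bigr] \, d\tau,
\]
with time-dependent coefficients $A(\tau) = g'(u_\phi(\tau)) - f''(u_\phi(\tau)) \partial_x u_\phi(\tau)$ and $B(\tau) = -f'(u_\phi(\tau))$. Since $f \in C^4$, $g \in C^3$, $s > 3/2$, and $X_s$ is a Banach algebra, composition with $f'$, $f''$, $g'$ preserves $X_s$; hence $A(\tau) \in X_{s-1}$ and $B(\tau) \in X_s \hookrightarrow X_{s-1}$ depend continuously on $\tau$, with uniform bounds controlled by $M_\phi := \sup_{[0,T]} \|u_\phi\|_s$.

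Next, I would show that the equation $w - \mathcal{K} w = h$ is uniquely solvable for any $h \in C([0,T];X_s)$. Estimates analogous to those in Lemma \ref{lemmild}, combined with the regularity inequality \eqref{regineq} taken with $\delta = 1$ and $r = s-1$ (which supplies the derivative needed to control the $\partial_x w$ term at the cost of an integrable $(t-\tau)^{-1/2}$ factor), yield
\[
\|\mathcal{K} w(t)\|_s \leq C_\phi \int_0^t \Big[1 + \tfrac{1}{2(t-\tau)}\Big]^{1/2} \|w(\tau)\|_s \, d\tau,
\]
with $C_\phi > 0$ depending only on $s$, $T$, and $M_\phi$. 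On a subinterval $[0,T^*]$ with $T^*$ so small that $C_\phi (T^* + \sqrt{2T^*}) < 1$, the map $w \mapsto h + \mathcal{K} w$ is a contraction on $C([0,T^*]; X_s)$, yielding a unique solution. To cover the full interval I would iterate: partition $[0,T]$ into finitely many subintervals of length at most $T^*$, and on each $[jT^*, (j+1)T^*]$ recast the equation as a Volterra problem of identical type with a new inhomogeneity $\tilde h$ that absorbs the integral over $[0,jT^*]$, which is already determined. Injectivity follows from the same contraction (or, equivalently, from a Gronwall argument applied to $\|w(t)\|_s$).

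Having established that $\partial_w \Gamma(\phi, u_\phi)$ is a bijection on $C([0,T]; X_s)$, the implicit function theorem in Banach spaces (see, e.g., \cite{ZeidI86}) yields a $C^2$ map $\widetilde \Upsilon$ defined on some neighborhood of $\phi$ in $X_s$ with $\Gamma(\psi, \widetilde\Upsilon(\psi)) = 0$ and $\widetilde\Upsilon(\phi) = u_\phi$. Uniqueness in Lemma \ref{lemmild} forces $\widetilde\Upsilon = \Upsilon$ on that neighborhood, and since $\phi \in X_s$ was arbitrary, $\Upsilon \in C^2(X_s, C([0,T];X_s))$. The main technical obstacle is the careful bookkeeping in step two: the kernel of $\mathcal{K}$ involves $\partial_x w$, which raises the spatial derivative order by one, and this loss must be absorbed precisely by the smoothing factor from \eqref{regineq}; once that is handled, all remaining steps are routine applications of Banach's fixed point theorem and the implicit function theorem.
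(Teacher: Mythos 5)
Your argument is correct, but it takes a slightly different route from the paper's at the one nonroutine step (invertibility of $\partial_w\Gamma(\phi,u_\phi)$). You treat $\mathrm{Id}-\mathcal K$ as a genuine Volterra problem: establish contractivity on a possibly smaller subinterval $[0,T^*]$ and then iterate across a finite partition of $[0,T]$. The paper instead exploits the fact that the existence time $T$ from Lemma \ref{lemmild} was \emph{already} chosen so small that $C_\phi(T+\sqrt{2T})<1/2$ (inequality \eqref{laii}); since the estimate for $\|\mathcal K w\|_{C([0,T];X_s)}$ produces exactly the same constant $C_\phi$ from \eqref{C}, this immediately gives $\|\partial_w\Gamma(\phi,u_\phi)-\mathrm{Id}\|<1$ in operator norm on the full interval $[0,T]$, so invertibility follows at once from the Neumann series with no subdivision. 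Your route is more robust (it would work for any $T$ on which the solution persists, without needing to retrofit a smallness condition), at the cost of the extra bookkeeping in the iteration; the paper's is shorter because it recycles the constant already baked into the fixed-point construction. Both reach the same conclusion, and the subsequent application of the implicit function theorem and the identification $\widetilde\Upsilon=\Upsilon$ via uniqueness are handled the same way in each. One small thing you glossed over that the paper does explicitly is the verification of the formula \eqref{derivwGamma} for the G\^ateaux/Fr\'echet derivative of $\Gamma$ in $w$, via the Taylor expansion of $F(u,u_x)$; strictly speaking this is not part of Lemma \ref{lemFrechdiff}, which only asserts differentiability without computing the derivative.
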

\begin{proof}
First, let us verify the formula for the operator $\partial_w \Gamma (\phi, u_\phi)$ by computing $\lim_{h \to 0} h^{-1} \Gamma(\phi, u_\phi + hw)$ for any $w \in C([0,T],X_s)$. From the definition of $F = F(u,u_x)$ we have by Taylor expansion,
\[
F(u_\phi + hw, \partial_x u_\phi + hw_x) = F(u_\phi, \partial_x u_\phi) + h[ \big( g'(u_\phi) - f''(u_\phi) \partial_x u_\phi\big) w - f'(u_\phi) w_x \big)] + O(h^2).
\]
Hence,
\[
h^{-1} \Gamma(\phi, u_\phi + hw)(t) = w(t) - \int_0^t \V(t-\tau) \Big[ \big(g'(u_\phi) - f''(u_\phi) \partial_x u_\phi\big) w - f'(u_\phi) w_x \Big] \, d\tau + O(h),
\]
in view of \eqref{uphi}. This yields \eqref{derivwGamma} when $h \to 0$.

Now, let us apply the regularity inequality \eqref{regineq} to estimate, for any $t \in [0,T]$ and all $w \in C([0,T],X_s)$,
\[
\begin{aligned}
\| \partial_w \Gamma (\phi, u_\phi) w(t) - w(t) \|_s &\leq \int_0^t \Big\| \V(t-\tau) \Big[ \big(g'(u_\phi) - f''(u_\phi) \partial_x u_\phi\big) w - f'(u_\phi) w_x \Big] \Big\|_s \, d \tau \\
&\leq C \int_0^t \Big[ 1 +  \frac{1}{2(t - \tau)} \Big]^{1/2} \| g'(u_\phi) w - (f'(u_\phi)w)_x \|_{s-1} \, d \tau.
\end{aligned}
\]

In view of estimates \eqref{Lipest}, the fact that $\Hsper$ is a Banach algebra for any $s > 1/2$ and that, by the fixed point theory of Lemma \ref{lemmild} there holds $\cA(Z_{\alpha,T}) \subset Z_{\alpha,T}$, or in other words, $\| u_\phi \| \leq \alpha + \| \phi \|_s$, then we obtain the following estimate for all $0 < \tau < t$,
\[
\begin{aligned}
\| g'(u_\phi) w - (f'(u_\phi)w)_x \|_{s-1} &\leq C_s \| g'(u_\phi) \|_{s-1} \| w \|_{s-1} + \| (f'(u_\phi)w)_x \|_{s-1}\\
&\leq C_s [ L_g \| u_\phi \|_{s-1} + |g'(0)| ] \| w \|_s + C_s [L_f \| u_\phi \|_s + |f'(0)|] \| w \|_s \\
&\leq C_s \big[ (L_f + L_g) (\alpha + \| \phi \|_s) + |f'(0)| + |g'(0)| \big] \sup_{\tau \in (0,T)} \| w(\tau) \|_s\\
&= L_s(\alpha + \| \phi \|_s, 0)  \sup_{\tau \in (0,T)} \| w(\tau) \|_s\\
&\leq L_s(\alpha + \| \phi \|_s, \alpha + \| \phi \|_s) \sup_{\tau \in (0,T)} \| w(\tau) \|_s.
\end{aligned}
\]
This yields,
\[
\begin{aligned}
\| \partial_w \Gamma (\phi, u_\phi) w(t) - w(t) \|_s &\leq C L_s(\alpha + \| \phi \|_s, \alpha + \| \phi \|_s) (T + \sqrt{T}) \sup_{\tau \in (0,T)} \| w(\tau) \|_s\\
&= C_\phi (T + \sqrt{T}) \| w \|_{C([0,T];X_s)},
\end{aligned}
\]
for the same constant $C_\phi > 0$ given in \eqref{C}. Since $T$ satisfies \eqref{laii} we conclude that 
\[
\| (\partial_w \Gamma (\phi, u_\phi)  - \Id)w \|_{C([0,T];X_s)} < \tfrac{1}{2} \| w \|_{C([0,T];X_s)},
\]
for all $w \in C([0,T],X_s)$. In other words, in the operator norm there holds 
\[
\| \partial_w \Gamma (\phi, u_\phi)  - \Id \| < 1.
\] 
This proves that $\partial_w \Gamma (\phi, u_\phi)$ is invertible on $C([0,T];X_s)$.

In view of Lemma \ref{lemFrechdiff}, we now apply the Implicit Function Theorem in Banach spaces (cf. \cite{ZeidI86}, \S 4.7) to conclude the existence of a neighborhood $\widetilde{B} \subset B$ of $\phi$ and a $C^2$-mapping
\begin{equation}
\label{datasolmap}
\Upsilon : \widetilde{B} \to C([0,T];X_s),
\end{equation}
such that $\Gamma (w, \Upsilon(w)) = 0$ for all $w \in \widetilde{B}$. By \eqref{defGamma}, the mapping $\Upsilon$ is clearly the data-solution map inasmuch as $\phi \mapsto \Upsilon(\phi) = u_\phi$. The conclusion follows.
\end{proof}

\subsection{Proof of Theorem \ref{teolocale}} Assuming $f \in C^2(\R)$, $g \in C^1(\R)$, the first assertion follows immediately upon application of Lemmata \ref{lemmild} and \ref{lemstrong}. If we suppose more regularity, $f \in C^4(\R)$ and $g \in C^3(\R)$, then the hypotheses of Lemmata \ref{lemFrechdiff} and \ref{leminvert} are satisfied and the data-solution map, $\phi \mapsto \Upsilon(\phi)$, is of class $C^2$. The theorem is proved.
\qed

\section{Orbital instability criterion}
\label{secmain}

This section is devoted to prove Theorem \ref{mainthem}.

\subsection{An abstract result}

The following theorem provides the link to obtain nonlinear (orbital) instability from spectral instability.

\begin{theorem}[Henry \emph{et al.} \cite{HPW82}]
\label{teohenry}
Let $Y$ be a Banach space and $\Omega \subset Y$ an open subset such that $0 \in \Omega$. Assume that there exists a map $\cM : \Omega \to Y$ such that $\cM(0) = 0$ and, for some $p > 1$ and some continuous linear operator $\cL$ with spectral radius $r(\cL) > 1$, there holds
\[
\| \cM(y) - \cL y \|_Y = O(\| y \|_Y^p) 
\]
as $y \to 0$. Then $0$ is unstable as a fixed point of $\cM$. More precisely, there exists $\vep_0 > 0$ such that for all $B_\eta(0) \subset Y$ and arbitrarily large $N_0 \in \N$ there is $n \geq N_0$ and $y \in B_\eta(0)$ such that $\| \cM^n(y) \|_Y \geq \vep_0$.
\end{theorem}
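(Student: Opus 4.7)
The plan is to argue by contradiction. Suppose $0$ is stable for $\cM$, so that for every $\vep > 0$ there exists $\delta(\vep) > 0$ with $\|y\|_Y < \delta(\vep)$ implying $\|\cM^n(y)\|_Y \leq \vep$ for all $n \geq 0$. I will choose a small $\vep_0 > 0$ once and for all, and construct, for every sufficiently large $n$, an initial datum $y_0^{(n)} \in Y$ with $\|y_0^{(n)}\|_Y \to 0$ and $\|\cM^n(y_0^{(n)})\|_Y \geq \vep_0/4$; this simultaneously contradicts stability and yields the quantitative conclusion (arbitrarily large $n$, arbitrarily small $\eta$-balls).

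The spectral input is an approximate eigenvector tailored to arbitrarily long times. Compactness of $\sigma(\cL)$ together with $r(\cL) > 1$ furnishes $\lambda_0 \in \sigma(\cL)$ with $|\lambda_0| = r(\cL)$; as a point of maximal modulus, $\lambda_0$ lies on the topological boundary of $\sigma(\cL)$ and is therefore an approximate eigenvalue of $\cL$. Starting from a unit vector $v$ with $\|(\cL - \lambda_0 I)v\|_Y < \eta$, the telescoping identity
\[
\cL^k v - \lambda_0^k v = \sum_{j=0}^{k-1} \lambda_0^j \, \cL^{k-1-j}(\cL - \lambda_0 I)v
\]
yields $\|\cL^k v - \lambda_0^k v\|_Y \leq k R^{k-1}\eta$ with $R := \max(\|\cL\|_Y, |\lambda_0|)$. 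Choosing the tolerance $\eta = \eta_n$ sufficiently small, for each $n$ one obtains a unit vector $v_n$ with $\tfrac12|\lambda_0|^k \leq \|\cL^k v_n\|_Y \leq 2|\lambda_0|^k$ for all $0 \leq k \leq n$; thus $v_n$ imitates an eigenvector of eigenvalue $\lambda_0$ up to time $n$.

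Set $t_n := \vep_0/|\lambda_0|^n$ and $y_0^{(n)} := t_n v_n$, so that $\|y_0^{(n)}\|_Y = t_n \to 0$. Writing $y_k := \cM^k(y_0^{(n)})$, $z_k := \cL^k y_0^{(n)}$, and $e_k := y_k - z_k$, the recursion $y_{k+1} = \cL y_k + r_k$ with $\|r_k\|_Y \leq C\|y_k\|_Y^p$ gives by Duhamel $e_k = \sum_{j=0}^{k-1} \cL^{k-1-j} r_j$. Gelfand's formula also supplies $\|\cL^j\|_Y \leq M_\delta(|\lambda_0|(1+\delta))^j$ for any fixed $\delta > 0$. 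I close the bootstrap $\|e_k\|_Y \leq \tfrac12\|z_k\|_Y$ (for $0 \leq k \leq n$) by induction on $k$: assuming it for indices $< k$, we have $\|y_j\|_Y \leq 2\|z_j\|_Y \leq 4 t_n |\lambda_0|^j$, and the Duhamel sum is dominated by the geometric series of common ratio $|\lambda_0|^{p-1}/(1+\delta) > 1$ (for $\delta$ small, using $p > 1$); computing this sum, the parasitic factors $(1+\delta)^{\pm(k-1)}$ cancel and one obtains
\[
\|e_k\|_Y \leq C_\delta\, t_n^p\, |\lambda_0|^{kp}, \qquad \frac{\|e_k\|_Y}{\|z_k\|_Y} \leq 2 C_\delta (t_n |\lambda_0|^k)^{p-1} \leq 2 C_\delta \vep_0^{p-1}.
\]
Picking $\vep_0$ once and for all with $2 C_\delta \vep_0^{p-1} \leq 1/2$ closes the bootstrap. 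Consequently $\|\cM^n(y_0^{(n)})\|_Y = \|y_n\|_Y \geq \tfrac12\|z_n\|_Y \geq \vep_0/4$, which contradicts $\|\cM^n(y_0^{(n)})\|_Y \leq \vep_0/8$ (from stability, valid as soon as $t_n < \delta(\vep_0/8)$, i.e., for $n$ large enough).

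The main obstacle — and the decisive point of the proof — is the precise matching of two uses of Gelfand's formula: the \emph{lower} bound $\|\cL^k v_n\|_Y \geq \tfrac12|\lambda_0|^k$ from the approximate-eigenvector construction, and the \emph{upper} bound $\|\cL^j\|_Y \leq M_\delta(|\lambda_0|(1+\delta))^j$ for the operator norm. These must be combined inside the Duhamel sum so that the parasitic $(1+\delta)^{\pm k}$ factors cancel exactly; without this cancellation, a crude use of either bound alone overwhelms the $p > 1$ improvement of the nonlinear remainder. Once the cancellation is in place, $t_n$ can be taken of the optimal order $|\lambda_0|^{-n}$, keeping $\|z_n\|_Y$ bounded below while $\|y_0^{(n)}\|_Y$ vanishes, and the contradiction is delivered for every $n$ beyond an explicit threshold, producing the $n \geq N_0$ quantifier in the statement.
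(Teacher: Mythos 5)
Your proposal is correct, and it is a genuine proof rather than a paraphrase of one in the paper: the paper itself offers no argument for Theorem \ref{teohenry}, deferring entirely to \cite{HPW82} and \cite{He81}. Your route --- take a peripheral spectral point $\lambda_0$ with $|\lambda_0|=r(\cL)$, note it lies in the approximate point spectrum, build for each $n$ an approximate eigenvector $v_n$ with two-sided bounds $\tfrac12|\lambda_0|^k\le\|\cL^k v_n\|\le 2|\lambda_0|^k$ up to time $n$, and close a Duhamel bootstrap in which the Gelfand upper bound $\|\cL^j\|\le M_\delta(|\lambda_0|(1+\delta))^j$ and the geometric sum of ratio $|\lambda_0|^{p-1}/(1+\delta)>1$ conspire to cancel the $(1+\delta)^{\pm k}$ factors --- is essentially the classical Henry--Perez--Wreszinski argument, and all the estimates check out (the telescoping identity, the cancellation yielding $\|e_k\|\le C_\delta t_n^p|\lambda_0|^{kp}$, and the closing choice $2C_\delta\vep_0^{p-1}\le\tfrac12$ are each correct). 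Two small remarks. First, the contradiction framing is redundant: your construction directly produces, for every $\eta>0$ and $N_0$, some $n\ge N_0$ with $t_n<\eta$ and $\|\cM^n(t_nv_n)\|\ge\vep_0/4$, which is the stated conclusion with $\vep_0$ renamed; you should also note explicitly that the bootstrap bound $\|y_k\|\le 4\vep_0$ is what keeps the iterates inside the neighborhood where $\|\cM(y)-\cL y\|\le C\|y\|^p$ holds, so that $\cM^k$ is even defined. Second, if $Y$ is a real Banach space the approximate eigenvector lives in the complexification, and your intermediate-time \emph{lower} bounds $\|z_k\|\ge\tfrac12 t_n|\lambda_0|^k$ need not pass to a real part; this is repaired by running the induction on the absolute bound $\|e_k\|\le C_\delta\vep_0^{p-1}t_n|\lambda_0|^k$ (which uses only upper bounds at intermediate times) and invoking a lower bound only at the final time $k=n$. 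In the present paper $Y=\Hdper([0,L])$ is complex, so this caveat is moot for the application.
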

\begin{proof}
See Theorem 2 in \cite{HPW82} (see also Theorem 5.1.5 in \cite{He81}).
\end{proof}

\begin{remark}
\label{remHenry}
The statement in Theorem \ref{teohenry} establishes the instability of $0$ as a fixed point of $\cM$; in other words, it shows the existence of points moving away from $0$ under successive applications of $\cM$. In the Remark after Theorem 2 in  \cite{HPW82}, the following extension of Theorem 2 is obtained: if $\Gamma_0$ is a $C^1$-curve of fixed points of  $\cM$ with $0\in \Gamma_0$ then $\Gamma_0$ is unstable, in other words, the points $\{\cM^n(y), n \geq 0\}$ not only move away from $0$, but also from $\Gamma_0$.
\end{remark}

Theorem \ref{teohenry} can be recast in a more suitable form for applications to nonlinear wave instability (see also \cite{AngNat14,AngNat16}).

\begin{corollary}
\label{corhenry}
Let $\cS : \Omega \subset Y \to Y$ be a $C^2$ map defined on an open neighborhood of a fixed point $\varphi$ of $\cS$. If there is an element $\mu \in \sigma(\cS'(\varphi))$ with $|\mu| > 1$ then $\varphi$ is unstable as a fixed point of $\cS$. Moreover, if $ \Gamma$ is a $C^1$-curve of fixed points of  $\cS$ with $\varphi \in \Gamma$ then $\Gamma$ is unstable.
\end{corollary}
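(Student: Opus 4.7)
The plan is to reduce this statement directly to Theorem \ref{teohenry} via a translation to the origin and a second-order Taylor expansion. First, I would translate the fixed point to $0$ by setting $\cM(y) := \cS(\varphi + y) - \varphi$ for $y$ in the open neighborhood $\Omega - \varphi$ of $0$ in $Y$. By construction $\cM(0) = 0$, and $\cM$ inherits the $C^2$ regularity of $\cS$. Moreover, $\cM'(0) = \cS'(\varphi) =: \cL$, and any fixed points of $\cS$ near $\varphi$ correspond to fixed points of $\cM$ near $0$ via $y \mapsto \varphi + y$. In particular, a $C^1$-curve $\Gamma$ of fixed points of $\cS$ through $\varphi$ corresponds to a $C^1$-curve $\Gamma_0 := \Gamma - \varphi$ of fixed points of $\cM$ through $0$.

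Next I would verify the hypotheses of Theorem \ref{teohenry} for $\cM$. Since $\cS \in C^2$, Taylor's theorem in Banach spaces yields
\[
\cM(y) = \cM(0) + \cM'(0) y + R(y) = \cL y + R(y),
\]
where the remainder satisfies $\| R(y) \|_Y = O(\|y\|_Y^2)$ as $y \to 0$, uniformly on a small ball around $0$ (the bound follows from the continuity of $\cM''$ on a neighborhood of $0$ via the integral form of the remainder). Thus $\| \cM(y) - \cL y \|_Y = O(\|y\|_Y^p)$ with $p = 2 > 1$. On the spectral side, since $\mu \in \sigma(\cL)$ the spectral radius satisfies $r(\cL) \geq |\mu| > 1$, and $\cL = \cS'(\varphi)$ is a bounded linear operator on $Y$ because $\cS$ is Fr\'echet differentiable at $\varphi$. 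All hypotheses of Theorem \ref{teohenry} are therefore satisfied.

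Applying Theorem \ref{teohenry} to $\cM$, I conclude that $0$ is unstable as a fixed point of $\cM$: there exists $\vep_0 > 0$ so that in every ball $B_\eta(0) \subset Y$ one can find a point $y$ and an iteration order $n$ (as large as we wish) with $\| \cM^n(y) \|_Y \geq \vep_0$. Translating back, taking $u = \varphi + y$ gives $\cS^n(u) - \varphi = \cM^n(y)$, so $\|\cS^n(u) - \varphi\|_Y \geq \vep_0$ for $u$ arbitrarily close to $\varphi$; that is, $\varphi$ is unstable as a fixed point of $\cS$. For the stronger statement about a $C^1$-curve $\Gamma$ of fixed points through $\varphi$, I invoke the extension of Theorem \ref{teohenry} recorded in Remark \ref{remHenry}: the curve $\Gamma_0$ of fixed points of $\cM$ through $0$ is unstable, i.e. the iterates $\{\cM^n(y)\}$ escape from a neighborhood of $\Gamma_0$ itself, not merely from $0$. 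Translating by $\varphi$ yields the instability of $\Gamma$ under $\cS$.

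The only substantive point is making sure the Taylor remainder is genuinely $O(\|y\|_Y^2)$ in the Banach-space sense; this is routine from $\cS \in C^2$ and the boundedness of $\cS''$ on a closed ball around $\varphi$, so no real obstacle arises. Everything else is a direct invocation of Theorem \ref{teohenry} and its curve-of-fixed-points extension.
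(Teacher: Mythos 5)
Your proof is correct and follows essentially the same route as the paper: translate the fixed point to the origin via $\cM(y) := \cS(\varphi+y) - \varphi$, apply a second-order Taylor expansion to verify the hypotheses of Theorem \ref{teohenry} with $p=2$, and invoke Remark \ref{remHenry} for the curve statement. If anything, you are slightly more explicit than the paper about the Taylor remainder estimate and the handling of the $C^1$-curve of fixed points.
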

\begin{proof}
Define the open set $\widetilde{\Omega} = \{ y - \varphi \, : \, y \in B \} \subset Y$, where $B = B_\delta(\varphi)$ is an open ball with radius $\delta > 0$, and consider the mapping $\cM : \widetilde{\Omega} \to Y$, $\cM(x) := \cS(x+\varphi) - \varphi$. Then, clearly, $\cM(0) = 0$ and $\cM$ is of class $C^2$ in $\widetilde{\Omega}$. Define $\cZ := \cS'(\varphi)$. Then, by hypothesis, there exists an eigenvalue $\mu \in \sigma(\cZ)$ with $1 < |\mu| \leq r(\cZ)$. By Taylor's formula,
\[
\cM(x) = \cM(0) + \cM'(0) x + O(\|x\|_Y^2) = \cZ x + O(\|x\|_Y^2),
\]
provided that $\| x \|_Y \ll 1$. Apply Theorem \ref{teohenry} to deduce the existence of $\vep_0 > 0$ such that, for any ball $B_\eta(\varphi)$, with radius $\eta > 0$ and arbitrarily large $N_0 \in \N$, there exists $n \geq N_0$ and $y \in B_\eta(\varphi)$ such that $\| \cS^n(y) - \varphi \|_Y \geq \vep_0$. This completes the proof.
\end{proof}


\subsection{The mapping $\cS$}

Before proving our main result, Theorem \ref{mainthem}, we need to specify the particular mapping $\cS$ (in the context of Corollary \ref{corhenry}) suitable for our needs. We start by making a couple of observations.

First notice that, if we denote the unique solution to the Cauchy problem \eqref{CpVBL} with initial datum $\varphi \in X_2 = \Hdper$ as $u_\varphi = \Upsilon(\varphi) \in C([0,T];X_2)$ (where $\varphi = \varphi(\cdot)$ is the $L$-periodic $C^2$ profile function), then for each $x \in [0,L]$ a.e. there holds $u_\varphi(t)(x) = \varphi(x-ct)$, or, in other words, 
\begin{equation}
\label{solprofil}
u_\varphi(t) = \varphi(\cdot - ct) = \zeta_{-ct}(\varphi) \in X_2, 
\end{equation}
where $\zeta_\eta$ is the translation operator in $X_2$ for any $\eta \in \R$. This follows by direct differentiation and by the profile equation \eqref{profileq}.

%

Our second observation is the content of the following
\begin{lemma}[global well-posedness of the linearized problem]
\label{lemglobalwp}
Let $f \in C^4(\R)$, $g \in C^3(\R)$. Then for every $\phi \in X_2 = \Hdper([0,L])$ and all $T > 0$ there exists a unique solution $v_\phi \in C([0,T];X_2) \cap C^1([0,T]; \Ldper([0,L]))$ to the Cauchy problem for the linearized operator around the periodic traveling wave $\varphi$, namely,
\begin{equation}
\label{Cplin}
\begin{aligned}
v_t &= \cL_0^c v,\\
v(0) &= \phi,
\end{aligned}
\end{equation}
where
\begin{equation}\label{linear}
\cL_0^c v = v_{xx} + g'(\varphi) v - (f'(\varphi)v)_x + cv_x.
\end{equation}

\end{lemma}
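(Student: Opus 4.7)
The plan is to mimic the Duhamel--semigroup approach of Lemmas \ref{lemmild} and \ref{lemstrong}, exploiting the fact that \eqref{Cplin} is \emph{linear}. First, I would recast the Cauchy problem in integral form via the heat semigroup,
$$
v(t) = \V(t)\phi + \int_0^t \V(t-\tau) \cB v(\tau) \, d\tau, \qquad t \in [0,T],
$$
where the bounded linear operator $\cB$ is given by
$$
\cB v := g'(\varphi) v - (f'(\varphi) v)_x + c v_x.
$$
Since $\varphi$ is $L$-periodic and of class $C^2$, and $f \in C^4$, $g \in C^3$, the coefficients $g'(\varphi)$ and $f'(\varphi)$ lie in $X_2$; hence, by the Banach algebra property of $X_s$ for $s > 1/2$, the map $\cB$ is continuous from $X_2$ into $X_1$ with the linear estimate
$$
\| \cB v \|_1 \leq C_\varphi \| v \|_2,
$$
where $C_\varphi > 0$ depends only on $\|\varphi\|_2$, $|c|$, $|f'(0)|$ and $|g'(0)|$, and --- crucially --- \emph{not} on $v$ or on $\phi$.

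Next, I would run a contraction argument on $C([0,T_*]; X_2)$ for a small $T_* > 0$. The regularity inequality \eqref{regineq} with $\delta = 1$, $r = 1$ combined with the linear bound above yields
$$
\Big\| \int_0^t \V(t-\tau) \cB v(\tau) \, d\tau \Big\|_2 \leq K_1 C_\varphi (T_* + \sqrt{2 T_*}) \sup_{\tau \in [0, T_*]} \| v(\tau) \|_2,
$$
for every $t \in [0,T_*]$. Choosing $T_*$ so that $K_1 C_\varphi (T_* + \sqrt{2 T_*}) < \tfrac{1}{2}$, the affine map $v \mapsto \V(t)\phi + \int_0^t \V(t-\tau) \cB v \, d\tau$ becomes a strict contraction on $C([0,T_*]; X_2)$, and Banach's fixed point theorem delivers a unique mild solution on $[0,T_*]$. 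The essential feature, by contrast with the nonlinear case, is that $T_*$ depends only on $C_\varphi$ and not on $\|\phi\|_2$. Consequently, starting again at time $T_*$ from $v(T_*) \in X_2$ produces a solution on $[T_*, 2T_*]$, and after finitely many iterations the solution is defined on any prescribed $[0,T]$, $T > 0$. This gives global existence in $C([0,T]; X_2)$; a Gronwall estimate additionally yields $\|v(t)\|_2 \leq e^{\kappa t}\|\phi\|_2$ for some $\kappa = \kappa(C_\varphi) > 0$, ruling out any blow-up.

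Finally, to upgrade this mild solution to a strong one and obtain $v \in C^1([0,T]; \Ldper)$, I would reproduce the proof of Lemma \ref{lemstrong} \emph{verbatim}, with $F(u,u_x)$ replaced by $\cB v$ and the decomposition \eqref{ABC} in hand: Corollary \ref{corheat} controls the pure heat-semigroup term, continuity of $\V(t)$ handles the endpoint contribution, and the regularity inequality together with the Dominated Convergence Theorem (using $\|\cB v(\tau)\|_1 \leq C_\varphi \|v(\tau)\|_2$ in place of \eqref{E2}) takes care of the remaining Duhamel integral. The absence of nonlinearity makes every estimate strictly simpler than in Section \ref{secwellpos}; the only substantive point, already addressed above, is uniformity of the existence time needed to iterate up to arbitrary $T > 0$. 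I therefore do not anticipate any genuine obstacle in this proof.
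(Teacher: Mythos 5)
Your proposal is correct and follows essentially the same route the paper sketches: the paper's proof simply remarks that the result ``follows similarly'' to the nonlinear contraction argument of Section~\ref{secwellpos}, with globality deferred to standard linear parabolic theory (citing Taylor). You have filled in those omitted details, and in particular correctly isolated the one point that makes the linear case global --- the contraction time $T_*$ depends only on the operator norm of $\cB$ (hence on $\varphi$) and not on $\|\phi\|_2$, so the local solution can be continued to any $[0,T]$.
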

\begin{proof}
Follows similarly as the proof for the nonlinear well-posedness result in periodic Sobolev spaces of section \ref{secwellpos}. The fact that the solution is now global is a consequence of the well-posedness and regularity for parabolic linear problems (see \cite{TayPDE3-2e}). We omit the details.
\end{proof}

\begin{remark}
Recall that $\cL_0^c$ denotes the linearized operator around the wave defined on the periodic Lebesgue space $\Ldper([0,L])$ (with Bloch parameter, or Floquet exponent, $\theta = 0$, see \eqref{Blochop}). The operator is defined in terms of the traveling wave profile $\varphi \in X_2$, its fundamental period $L$ and its speed, $c$.
\end{remark}

Let us now define a mapping which plays the role of the operator $\cS$ in the abstract Corollary \ref{corhenry}. For each $\phi \in X_2 $, set
\begin{equation}
\label{defS}
\begin{aligned}
\cS &: X_2 \to X_2,\\
\cS(\phi) &:= \zeta_{cT} (u_\phi(T))
\end{aligned}
\end{equation}
where $u_\phi = \Upsilon(\phi)$ denotes the unique solution to the Cauchy problem \eqref{CpVBL} with $u_\phi(0)=\phi$, $u_\phi \in C([0,{T}];X_2)$. Recall that $u_\phi$ is given by the variation of constants formula \eqref{uphi}.

\begin{lemma}[properties of $\cS$]
\label{propS} 
Let $\varphi$ be a periodic profile for equation \eqref{VBL}. The mapping $\cS$ defined in \eqref{defS} satisfies:
\begin{itemize}
\item[(a)] $\cS(\varphi) = \varphi \in X_2$.
\item[(b)] $\cS$ is twice Fr\'echet differentiable in an open neighborhood of $\varphi$.
\item[(c)] For every $\psi \in X_2$ there holds
\begin{equation}
\label{devS}
\cS'(\varphi) \psi = v_\psi(T),
\end{equation}
where $v_\psi (t) \in X_2$ denotes the unique solution to the linear Cauchy problem \eqref{Cplin} with initial datum $v_\psi (0) = \psi$.
\end{itemize}
\end{lemma}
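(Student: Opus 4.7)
The plan is to handle the three items in order, each building on the preceding structural results of Section \ref{secwellpos}.

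\textbf{Item (a).} This is essentially a bookkeeping check. By \eqref{solprofil} the unique solution to \eqref{CpVBL} with datum $\varphi$ is $u_\varphi(t) = \zeta_{-ct}(\varphi)$, so $u_\varphi(T) = \zeta_{-cT}(\varphi)$. Since translations form a group ($\zeta_{\eta_1}\circ \zeta_{\eta_2} = \zeta_{\eta_1+\eta_2}$), I immediately obtain $\cS(\varphi) = \zeta_{cT}\zeta_{-cT}(\varphi) = \varphi$.

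\textbf{Item (b).} I will write $\cS$ as a composition of maps whose regularity is already known. Define the evaluation $\mathrm{ev}_T : C([0,T];X_2) \to X_2$, $\mathrm{ev}_T(u) = u(T)$, which is bounded linear, hence $C^\infty$. The translation $\zeta_{cT} : X_2 \to X_2$ is a bounded linear isometry on $X_2$ (as noted after Definition of $\Hsper$), hence $C^\infty$. Finally, Lemma \ref{leminvert} provides a neighborhood $\widetilde B$ of $\varphi$ on which the data-solution map $\Upsilon : \widetilde B \to C([0,T];X_2)$ is of class $C^2$. Since $\cS = \zeta_{cT}\circ \mathrm{ev}_T\circ \Upsilon$, the chain rule in Banach spaces yields that $\cS$ is $C^2$ in $\widetilde B$.

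\textbf{Item (c).} By the chain rule applied to the decomposition above, $\cS'(\varphi)\psi = \zeta_{cT}\bigl(w(T)\bigr)$, where $w(t) := \Upsilon'(\varphi)\psi\,(t)$. Differentiating the integral equation \eqref{uphi} with respect to the datum (as in the computation preceding \eqref{derivwGamma}), $w \in C([0,T];X_2)$ is characterized as the unique solution of the linearized equation with \emph{time-dependent} coefficients
\begin{equation*}
w_t = w_{xx} + g'(u_\varphi(t))\,w - \partial_x\bigl(f'(u_\varphi(t))\,w\bigr),\qquad w(0)=\psi,
\end{equation*}
where $u_\varphi(t,x) = \varphi(x-ct)$. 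The key step is to pass to the co-moving frame: set $\tilde w(t) := \zeta_{ct}(w(t))$, i.e. $\tilde w(t,x) = w(t,x+ct)$. A direct computation using the chain rule on $\tilde w_t = (\partial_t w)(t,x+ct) + c\,(\partial_x w)(t,x+ct)$, together with $u_\varphi(t,x+ct)=\varphi(x)$, shows that the nonautonomous coefficients become the stationary profile $\varphi$ and the extra drift $c\tilde w_x$ appears; that is, $\tilde w$ solves
\begin{equation*}
\tilde w_t = \tilde w_{xx} + g'(\varphi)\tilde w - \partial_x\bigl(f'(\varphi)\tilde w\bigr) + c\,\tilde w_x = \cL_0^c\tilde w,\qquad \tilde w(0)=\psi.
\end{equation*}
By the uniqueness statement of Lemma \ref{lemglobalwp} I conclude $\tilde w(t) = v_\psi(t)$ for all $t\in[0,T]$. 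Evaluating at $t=T$ gives $\zeta_{cT}(w(T)) = v_\psi(T)$, which is precisely \eqref{devS}.

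The only step that requires real care is the change of variables in Item (c): I need to verify carefully that the translation $\zeta_{ct}$ converts the time-dependent linearization (whose coefficients are translates of $\varphi$) into $\cL_0^c$ acting on the stationary profile $\varphi$, with the extra drift $c\partial_x$ arising from the time-dependence of the translation. Items (a) and (b) are essentially formal consequences of the group property of $\zeta_\eta$ and the $C^2$-regularity of $\Upsilon$.
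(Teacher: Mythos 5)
Your proposal is correct and follows essentially the same route as the paper: item (a) by the group property of translations, item (b) by the chain rule applied to the $C^2$ data-solution map $\Upsilon$ composed with the linear maps $\mathrm{ev}_T$ and $\zeta_{cT}$, and item (c) by identifying $\Upsilon'(\varphi)\psi$ as the solution of the linearized problem with time-dependent coefficients and then passing to the co-moving frame to recover $\cL_0^c$ and invoking uniqueness from Lemma \ref{lemglobalwp}. The only cosmetic difference is that the paper derives $\Upsilon'(\varphi)\psi$ by an explicit $\vep$-expansion of the integral equation \eqref{uphi}, whereas you appeal directly to the chain rule and then differentiate the integral equation; the substance of the argument, and in particular the crucial change-of-variables computation, is identical.
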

\begin{proof}
First, notice that $\cS(\varphi) = \zeta_{cT}(u_{\varphi}(T)) = \zeta_{cT} (\zeta_{-cT}(\varphi)) = \varphi$ in view of \eqref{solprofil}. That is, $\varphi \in X_2$ is a fixed point of $\cS$, showing (a). Now, from Theorem \ref{teolocale} we know that the data-solution map $\phi \mapsto \Upsilon(\phi) = u_\phi$ is of class $C^2$. Also, the translation operator is of class $C^2$ in $\Hdper$ ($C^\infty$ indeed). Hence, the composition is of class $C^2$ and we conclude that $\cS$ is twice Fr\'echet differentiable in an open neighborhood, $\Omega = \{ \phi \in X_2 \, : \, \| \phi - \varphi \|_2 < \eta \}$, of $\varphi$. This proves (b).

Therefore, we obtain the Fr\'echet derivative by computing the (G\^ateaux derivative) operator,
\[
\cS'(\varphi) \psi = \frac{d}{d\vep} \big( \cS(\varphi + \vep \psi) \big)_{|\vep = 0},
\]
for any arbitrary $\psi \in X_2$. First observe that, by definition, $\cS(\varphi + \vep \psi) = \zeta_{cT} \big( u_{\varphi + \vep \psi} (T) \big) = \zeta_{cT} \big( \Upsilon(\varphi + \vep \psi) (T) \big)$. Since $\Upsilon$ is of class $C^2$ around $\varphi$ we make the expansion
\begin{equation}
\label{exp1}
u_{\varphi + \vep \psi} = \Upsilon (\varphi + \vep \psi) = \Upsilon(\varphi) + \vep \Upsilon'(\varphi) \psi + O(\vep^2).
\end{equation}
From formula \eqref{uphi} we know that
\[
\begin{aligned}
u_{\varphi + \vep \psi}(t) &= \cV(t) (\varphi + \vep \psi) + \int_0^t \cV(t-\tau) F(u_{\varphi + \vep \psi}, \partial_x u_{\varphi + \vep \psi})(\tau) \, d\tau \\ 
&= \cV(t) (\varphi + \vep \psi) + \int_0^t \cV(t-\tau) \big[ g(u_{\varphi + \vep \psi}) - f'(u_{\varphi + \vep \psi}) \partial_x u_{\varphi + \vep \psi} \big] \, d\tau.
\end{aligned}
\]
Substituting \eqref{exp1} and recalling $\Upsilon(\varphi) = u_\varphi$ we arrive at the expansions,
\[
\begin{aligned}
g(u_{\varphi + \vep \psi}) &= g(u_\varphi) + \vep g'(u_\varphi) \Upsilon'(\varphi) \psi + O(\vep^2),\\
f'(u_{\varphi + \vep \psi}) \partial_x \Upsilon (\varphi + \vep \psi) &= f'(u_\varphi) \partial_x u_\varphi + \vep \partial_x \big( f'(u_\varphi) \Upsilon'(\varphi) \psi \big) + O(\vep^2).
\end{aligned}
\]
Substitution into the previous  integral formula yields
\[
\begin{aligned}
u_{\varphi + \vep \psi}(t) = \cV(t) \varphi + \int_0^t \cV(t-\tau) \big[ g(u_\varphi) - f'(u_\varphi) \partial_x u_\varphi \big] \, d\tau + \vep V_{\varphi,\psi}(t) +  O(\vep^2),
\end{aligned}
\]
where
\[
V_{\varphi,\psi}(t) :=
 \cV(t) \psi + \int_0^t \cV(t-\tau) \big[ g'(u_\varphi) \Upsilon'(\varphi) \psi - \partial_x \big( f'(u_\varphi) \Upsilon'(\varphi) \psi \big) \big] \, d\tau.
\]
Upon differentiation, we notice that
\[
\frac{d}{d\vep} \big( u_{\varphi + \vep \psi}(t) \big)_{|\vep = 0} = \frac{d}{d\vep} \big( \Upsilon(\varphi)(t) + \vep (\Upsilon'(\varphi) \psi)(t) + O(\vep^2) \big)_{|\vep = 0} = (\Upsilon'(\varphi) \psi)(t),
\]
and therefore
\[
V_{\varphi,\psi}(t) = (\Upsilon'(\varphi) \psi)(t) \in X_2,
\]
for all $t \in [0, {T}]$. Then we have shown that $V_{\varphi,\psi}$  is a solution to the integral equation
\begin{equation}
\label{exprV}
V_{\varphi,\psi}(t) = \cV(t) \psi + \int_0^t \cV(t-\tau) \big[ g'(u_\varphi) V_{\varphi,\psi}(\tau) - \partial_x \big( f'(u_\varphi) V_{\varphi,\psi}(\tau) \big) \big] \, d\tau,
\end{equation}
for all $t \in [0, {T}]$. From formula \eqref{exprV} we recognize that $V_{\varphi,\psi}(0) = \psi$ and that it is the solution to a linearized Cauchy problem \eqref{Cplin} with $c=0$. We claim that
\begin{equation}
\label{claimV}
 v_\psi(t):=\zeta_{ct} \big(V_{\varphi,\psi}(t) \big),\quad  t \in [0, {T}],
\end{equation} 
is  the unique solution to the linearized Cauchy problem \eqref{Cplin} with initial datum $\psi$. Indeed, first notice that $\zeta_0 \big(V_{\varphi,\psi}(0) \big) = V_{\varphi,\psi}(0) = \psi$. Now, for $x \in [0,L]$ let us denote
\[
V(t,x) := \zeta_{ct} \big(V_{\varphi,\psi}(t) \big)(x) = V_{\varphi,\psi}(t)(x+ct) = V_{\varphi,\psi}(t, x + ct).
\]
Hence, from \eqref{exprV} and since $\cT = \partial_x^2$ is the infinitesimal generator of the semigroup $\cV(t)$, we obtain
\[
\begin{aligned}
\partial_t V &= \partial_t V_{\varphi,\psi}(t, x +ct) + c \partial_x V_{\varphi,\psi}(t,x+ct) \\
&= \partial_x^2 V_{\varphi,\psi}(t,x+ct) + g'(u_\varphi(x+ct)) V_{\varphi,\psi}(t,x+ct) +\\
&\quad - \partial_x \big( f'(u_{\varphi}(x+ct)) V_{\varphi,\psi}(t,x+ct) ) + c \partial_x V_{\varphi,\psi}(t,x+ct) \\
&= \partial_x^2 V + g'(\varphi(x)) V - \partial_x \big( f'(\varphi(x)) V \big) + c \partial_x V,
\end{aligned}
\]
because $u_\varphi(\cdot + ct) = \varphi(\cdot -ct + ct) = \varphi(\cdot)$. This shows that $\partial_t V = \cL_0^c V$, $V(0) = \psi$ and therefore it is a solution to \eqref{Cplin}. By uniqueness of the solution, we obtain \eqref{claimV} for all $t \in [0, {T}]$.

Finally, evaluating at $t = T$ we have
\[
\begin{aligned}
\cS'(\varphi) \psi &= \frac{d}{d\vep} \Big( \zeta_{cT} (\Upsilon(\varphi) (T)) + \vep \zeta_{cT} ( \Upsilon'(\varphi) \psi (T) ) + O(\vep^2) \Big)_{|\vep = 0} \\
&= \zeta_{cT} \big( \Upsilon'(\varphi) \psi (T) \big)= \zeta_{cT} \big(V_{\varphi,\psi}(T) \big)= v_\psi(T),
\end{aligned}
\]
for any $\psi \in X_2$. This shows (c) and the lemma is proved.
\end{proof}

We are now able to prove the main instability result.

\subsection{Proof of Theorem \ref{mainthem}}
Let us consider the eigenfunction $\Psi \in X_2 = \Hper^2([0,L])$, of the linearized operator $\cL_0^c \, : \, \Ldper([0,L]) \to \Ldper([0,L])$, as the initial condition for the linear Cauchy problem \eqref{Cplin}. By Lemma \ref{lemglobalwp} there exists a unique solution $v_\Psi \in C([0,T];X_2) \cap C^1([0,T];\Ldper([0,L])$ with $v_\Psi(0)=\Psi$. If we define, however, $U(t) = e^{\lambda t} \Psi \in X_2$ for all $t \geq 0$ then, clearly, $U \in C([0,T];X_2) \cap C^1([0,T]; \Ldper([0,L]))$, $U(0) = \Psi$ and 
\[
\partial_t U = \lambda e^{\lambda t} \Psi = e^{\lambda t} \cL_0^c \Psi = \cL_0^c \big( e^{\lambda t} \Psi \big) = \cL_0^c U.
\]
Hence, $U$ is a solution to the Cauchy problem \eqref{Cplin} with $U(0) = \Psi$. By uniqueness of the solution we obtain $U(t) = v_\Psi (t)$ in $X_2$ for all $t \geq 0$. Now, define $\mu := e^{\lambda T}$. This yields
\[
\cS'(\varphi) \Psi = v_\Psi  (T) = U(T) = e^{\lambda T} \Psi = \mu \Psi.
\]
This shows that $\mu \in \sigma(\cS'(\varphi))$ with $|\mu| > 1$ because $\Re \lambda > 0$. Thus, the mapping defined in \eqref{defS} on an open neighborhood of $\varphi$ satisfies the hypotheses of Corollary \ref{corhenry}. 
Therefore, for $\Gamma=\mathcal O_\varphi$ being a $C^1$-curve of fixed points of $\cS$, we conclude that the periodic traveling wave $\varphi$ is orbitally unstable in the space $X_2 = \Hdper([0,L])$. The proof is complete.

\qed

\section{Applications}
\label{secappl}

In order to apply the orbital instability criterion to specific examples, let us write the following hypotheses on the nonlinear functions $f$ and $g$, which were considered by the authors in \cite{AlPl21} in their existence and spectral stability analysis. These hypotheses describe a particular class of viscous balance laws:

\begin{itemize}
\item[(A$_1$)] $f \in C^4(\R)$. \phantomsection\label{A1} 
\item[(A$_2$)] \phantomsection\label{A2}  $g \in C^3(\R)$ and it is of Fisher-KPP type, satisfying
\begin{equation}
\label{FisherKPP}
\begin{aligned}
	&g(0) = g(1) =0,\\
	&g'(0) > 0, \, g'(1) < 0,\\
	&g(u)>0 \, \textrm{ for all } \, u \in(0,1),\\
	&g(u)<0 \, \textrm{ for all } \, u \in (-\infty,0).
\end{aligned}
\end{equation}
\item[(A$_3$)] \phantomsection\label{A3}  There exists $u_* \in (-\infty, 0)$ such that
\[
\int_{u_*}^0 g(s) \, ds + \int_0^1 g(s) \, ds= 0.
\]
\item[(A$_4$)] \phantomsection\label{A4}  Genericity condition:
\begin{equation}
\label{gencon}
\overline{a}_0 := f'''(0) - \frac{f''(0) g''(0)}{\sqrt{g'(0)}} \neq 0.
\end{equation}
\item[(A$_5$)] \phantomsection\label{A5} Non-degeneracy condition:
\begin{equation}
\label{nondeg}
\left[ \int_{u_*}^1 \!\!\gamma(s) \, ds \right] \left[ \int_{u_*}^1 \!\! f'(s) \sqrt{1+\gamma'(s)^2} \, ds\right] \neq \left[ \int_{u_*}^1 \!\! \sqrt{1+\gamma'(s)^2} \, ds \right] \left[ \int_{u_*}^1 \!\! f'(s) \gamma(s) \, ds \right], 
\end{equation}
where 
\begin{equation}
\label{defPsi}
\gamma(u):= \sqrt{2\int_u^1 g(s) \, ds} \, > 0, \qquad u \in (u_*,1).
\end{equation}
\item[(A$_6$)] \phantomsection\label{A6} Saddle condition:
\begin{equation}
\label{saddle}
f'(1) \left[ \int_{u_*}^1 \gamma(s) \, ds \right]  \neq \int_{u_*}^1 f'(s) \gamma(s) \, ds. 
\end{equation}
\end{itemize}

%

\begin{remark}
Although hypotheses \hyperref[A1]{(A$_1$)} - \hyperref[A6]{(A$_6$)} may at first glance seem too restrictive, they are fulfilled by a large number of models including, for example, the well-known Burgers-Fisher equation (cf. \cite{LeHa16,AlPl21,MiGu02}),
\begin{equation}
\label{eqBF}
u_t + uu_x = u_{xx} + u(1-u), 
\end{equation}
for which the nonlinear flux function is given by the paradigmatic Burgers' flux \cite{Bur48,La57}, $f(u) = \tfrac{1}{2} u^2$, and the reaction term is the classical logistic growth function, $g(u) = u(1-u)$ (cf. \cite{Fis37,KPP37}). Other models that satisfy these assumptions include the Buckley-Leverett flux \cite{BuLe42} together with the logistic reaction, yielding the scalar equation
\begin{equation}
\label{LogBLmodel}
u_t + \partial_x \left( \frac{u^2}{u^2 + \tfrac{1}{2}(1-u)^2}\right) = u_{xx} + u(1-u),
\end{equation}
as well as the modified Burgers-Fisher equation,
\begin{equation}
\label{MBFmodel}
u_t + \partial_x \big( \tfrac{1}{4}u^4 - \tfrac{1}{3} u^3\big) = u_{xx} + u-u^4,
\end{equation}
just to mention a few. See \cite{AlPl21} for more details.
\end{remark}

\begin{remark}
The most important assumption is \hyperref[A2]{(A$_2$)}, which specifies a balance (or production) term with logistic response, that is, with an unstable equilibrium point at $u = 0$ and a stable one at $u=1$. Reaction functions of logistic type are used to model dynamics of populations with limited resources, which saturate into a stable equilibrium point associated to an intrinsic carrying capacity (in this case, the equilibrium state $u = 1$). They are also known as source functions of Fisher-KPP \cite{Fis37,KPP37} or monostable type. 
%
\end{remark}

As it is established in \cite{AlPl21}, the unstable nature of the origin ($g'(0) > 0$) is responsible of both the existence and spectral instability of small-amplitude periodic waves emerging from a local Hopf bifurcation, as well as the existence and spectral instability of large-period waves which emerge from a global homoclinic bifurcation near a traveling pulse based on a saddle. Even if you change the diffusion mechanism, the unstable equilibrium of the reaction produce similar results (see, for example, the recent paper \cite{AMP22}, where hyperbolic systems with logistic source are considered). It is to be observed that these periodic waves do not exhibit intrinsic symmetries, inasmuch as the existence analysis does not rely on the standard construction techniques but on bifurcation analyses. The existence proof (which is based on local and global bifurcations) also provides the tools to analyze their spectrum. For instance, it can be shown that, for both families of waves, the Floquet spectrum intersects the unstable complex half plane and, hence, they are spectrally unstable (for details see \cite{AlPl21}). 

Let us apply the criterion established in Theorem \ref{mainthem} to both families, yielding their orbital instability in Sobolev periodic spaces with the same period as the fundamental period of the underlying wave. We first examine the case of small-amplitude waves.

\subsection{Orbital instability of small-amplitude periodic waves}

Under assumptions \hyperref[A1]{(A$_1$)} - \hyperref[A4]{(A$_4$)}, the profile equation \eqref{profileq} defines a first order ODE system in the phase plane for which the origin is a center for a critical value of the speed, $c_0$, and where a local Hopf bifurcation occurs when the speed $c$ crosses $c_0$. Then small-amplitude periodic orbits, with period of order $O(1)$, emerge. This behavior can be stated as follows.

\begin{theorem}[existence of small amplitude periodic waves \cite{AlPl21}]
\label{thmexbded}
Suppose that conditions \emph{\hyperref[A1]{(A$_1$)}} - \emph{\hyperref[A4]{(A$_4$)}} hold. Then there exist a critical speed given by $c_0 := f'(0)$ and some $\ep_0 > 0$ sufficiently small such that, for each $0 < \ep < \ep_0$ there exists a unique (up to translations) periodic traveling wave solution to the viscous balance law \eqref{VBL} of the form $u(x,t) = \varphi^\ep(x - c(\ep)t)$, with speed $c(\ep) = c_0 + \ep$ if $\overline{a}_0 > 0$, or $c(\ep) = c_0 - \ep$ if 
$\overline{a}_0 < 0$, and with fundamental period,
\begin{equation}
\label{fundphopf}
L_\ep = \frac{2 \pi}{\sqrt{g'(0)}} + O(\ep), \qquad \text{as } \, \ep \to 0^+.
\end{equation}
The profile function $\varphi^\ep = \varphi^\ep(\cdot)$ is of class $C^3(\R)$, satisfies $\varphi^\ep(x + L_\ep) = \varphi^\ep(x)$ for all $x \in \R$ and is of small amplitude. More precisely,
\begin{equation}
\label{bdsmalla}
|\varphi^\ep(x)|, |(\varphi^\ep)'(x)| \leq C \sqrt{\ep},
\end{equation}
for all $x \in \R$ and some uniform $C > 0$.
\end{theorem}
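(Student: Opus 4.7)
The plan is to apply the classical Hopf bifurcation theorem to the phase-plane reformulation of the profile equation \eqref{profileq}, with the wave speed $c$ playing the role of the bifurcation parameter.

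First, I would recast \eqref{profileq} as the planar ODE system
\[
\varphi' = v, \qquad v' = (f'(\varphi) - c)v - g(\varphi),
\]
on $(\varphi,v) \in \R^2$. Under hypothesis \hyperref[A2]{(A$_2$)} the origin is an equilibrium of this system (since $g(0)=0$), and the linearization at the origin has matrix
\[
J(c) = \begin{pmatrix} 0 & 1 \\ -g'(0) & f'(0) - c \end{pmatrix},
\]
with eigenvalues $\lambda_\pm(c) = \tfrac{1}{2}(f'(0) - c) \pm \tfrac{1}{2}\sqrt{(f'(0)-c)^2 - 4 g'(0)}$. Setting $c_0 := f'(0)$ one has $\lambda_\pm(c_0) = \pm i\sqrt{g'(0)}$ (recall $g'(0)>0$), confirming a nondegenerate pair of purely imaginary eigenvalues. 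The transversality condition $\tfrac{d}{dc}\Re \lambda_\pm(c_0) = -\tfrac{1}{2} \neq 0$ holds automatically, so the origin undergoes a Hopf bifurcation as $c$ crosses $c_0$.

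Next, I would compute the first Lyapunov coefficient to identify the direction and criticality of the bifurcation. After putting $J(c_0)$ in its canonical real form and expanding $f$ and $g$ to third order around $0$, the standard formula (see, e.g., Kuznetsov or Guckenheimer--Holmes) reduces, after simplification, to a nonzero multiple of
\[
\overline{a}_0 = f'''(0) - \frac{f''(0) g''(0)}{\sqrt{g'(0)}},
\]
which is exactly the quantity appearing in \hyperref[A4]{(A$_4$)}. Hypothesis \hyperref[A4]{(A$_4$)} therefore guarantees nondegeneracy of the Hopf bifurcation, and the sign of $\overline{a}_0$ dictates whether the branch of periodic orbits emerges for $c > c_0$ or $c < c_0$, producing the bifurcation law $c(\ep) = c_0 + \ep$ when $\overline{a}_0>0$ and $c(\ep) = c_0 - \ep$ when $\overline{a}_0<0$.

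The Hopf theorem then yields, for each sufficiently small $\ep > 0$, a unique (up to phase) nontrivial closed orbit $(\varphi^\ep, (\varphi^\ep)')$ in the phase plane with period $L_\ep$ depending smoothly on $\sqrt{\ep}$; evaluating at the bifurcation value gives $L_\ep = 2\pi/\sqrt{g'(0)} + O(\ep)$ as claimed in \eqref{fundphopf}. The standard amplitude expansion for Hopf bifurcation shows that both $\varphi^\ep$ and $(\varphi^\ep)'$ are of order $\sqrt{\ep}$ uniformly in $x \in \R$, giving \eqref{bdsmalla}. The $C^3$ regularity of the profile follows from \hyperref[A1]{(A$_1$)}, \hyperref[A2]{(A$_2$)} together with bootstrapping in the planar ODE. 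The principal technical obstacle is the explicit reduction of the Lyapunov coefficient to the precise expression $\overline{a}_0$: this requires carefully carrying out a near-identity change of coordinates to normal form (or equivalently, the Bautin formula), tracking the contributions of both the quadratic and cubic terms of $f$ and $g$, and checking that no other combinations of derivatives appear.
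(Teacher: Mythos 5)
Your proposal is correct and matches the approach the paper relies on: the paper itself offers no independent proof but simply cites \cite{AlPl21}, and the remark following the theorem confirms that the argument there is exactly a local Hopf bifurcation analysis of the planar profile system at $c_0 = f'(0)$, with the sub/supercritical dichotomy governed by the sign of the first Lyapunov coefficient $\overline{a}_0$. Your phase-plane reduction, the Jacobian $J(c)$, the eigenvalue pair $\pm i\sqrt{g'(0)}$ at $c=c_0$, the transversality condition $\tfrac{d}{dc}\Re\lambda_\pm(c_0)=-\tfrac12$, the period formula $2\pi/\sqrt{g'(0)}+O(\ep)$, and the $O(\sqrt\ep)$ amplitude scaling all line up with the standard Hopf bifurcation theorem that \cite{AlPl21} invokes. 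You rightly flag the normal-form/Lyapunov-coefficient computation yielding $\overline{a}_0$ as the only nontrivial verification; since the present paper delegates that calculation to \cite{AlPl21} as well, your sketch is at the same level of detail as the paper's own treatment.
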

\begin{proof}
See the proof of Theorem 1.2, \S 2.1, in \cite{AlPl21} for details.
\end{proof}

\begin{remark}
The proof of this existence result is based on a (local) Hopf bifurcation analysis around the critical value $c_0 = f'(0)$ of the wave speed. The bifurcation can be either sub- or supercritical, depending on the sign of $\overline{a}_0$ in \eqref{gencon}.
\end{remark}

\begin{figure}[t]
\begin{center}
\subfigure[$(\varphi,\varphi')$]{\label{figFaseHopfBF}\includegraphics[scale=.465, clip=true]{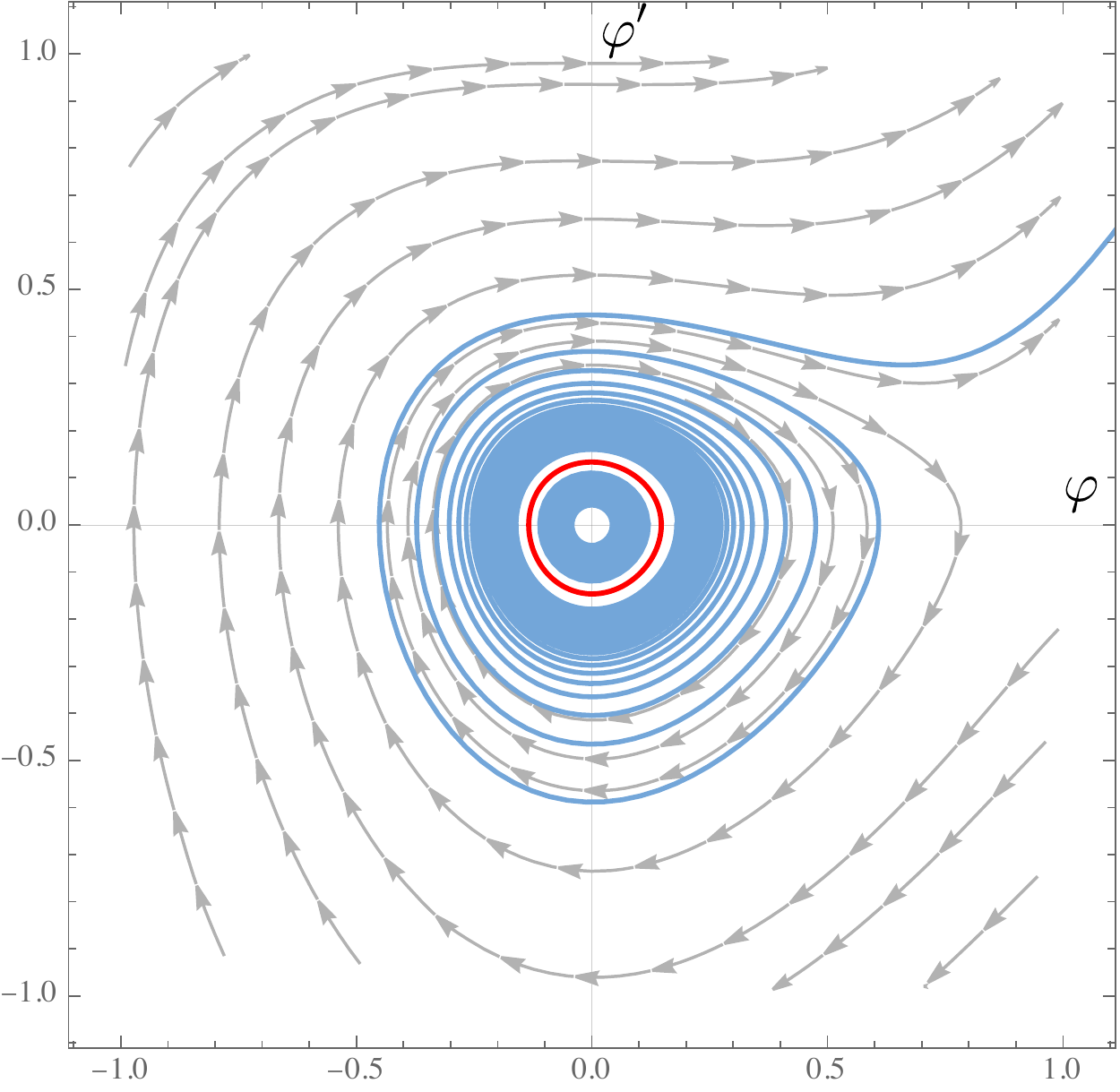}}
\subfigure[$\varphi = \varphi(x)$]{\label{figWaveHopfBF}\includegraphics[scale=.465, clip=true]{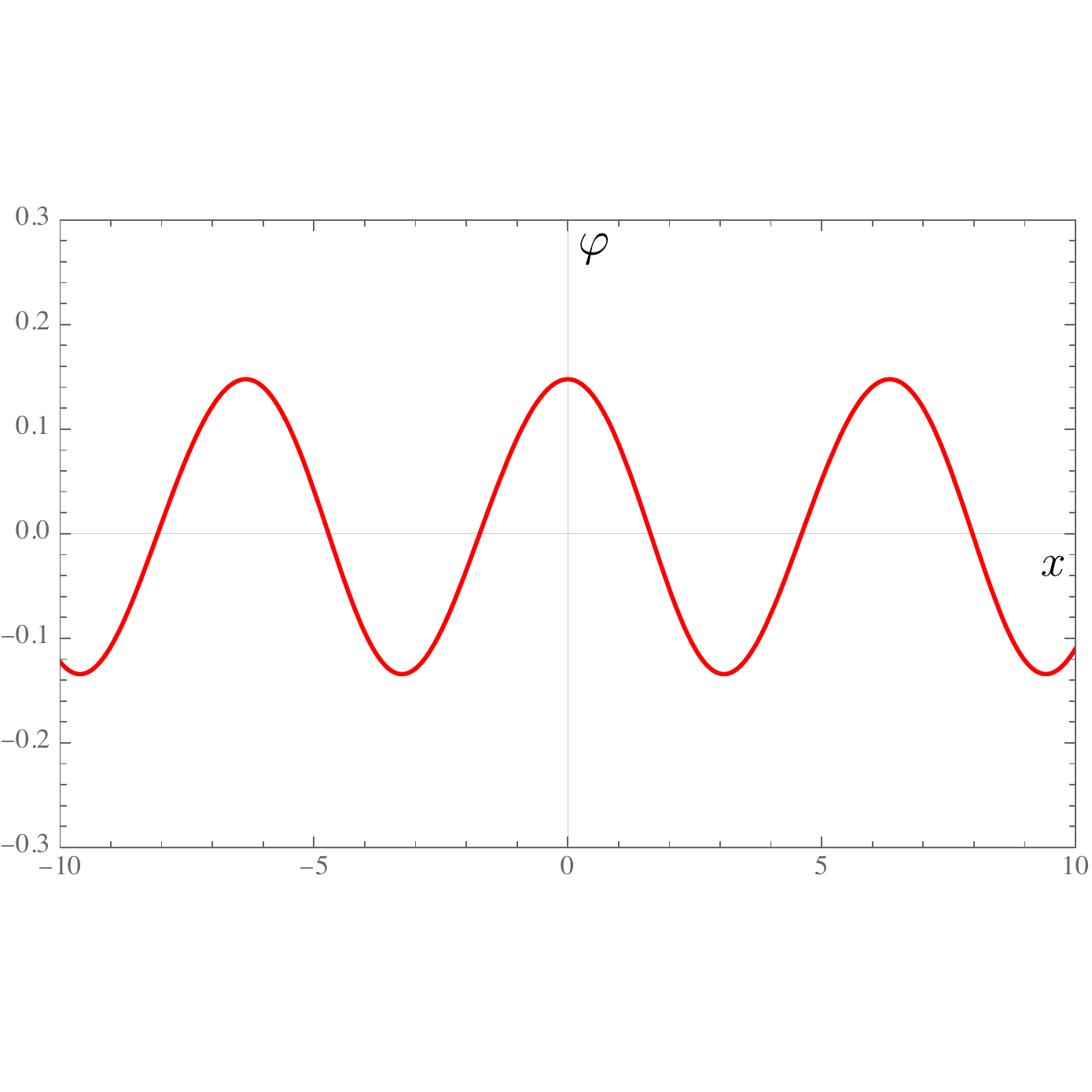}}
\end{center}
\caption{\small{Emergence of small-amplitude waves for the Burgers-Fisher equation \eqref{eqBF}. Panel (a) shows the phase portrait (in the $(\varphi,\varphi')$ plane) of the ODE \eqref{profileq} for the speed value $c = 0.005$. Numerical solutions of \eqref{profileq} with nearby initial points are shown in light blue color. The orbit in red is a numerical approximation of the unique small amplitude periodic wave for this speed value. Panel (b) shows the graph (in red) of the approximated periodic wave $\varphi$ as a function of $x$ (color online).}}\label{figHopfBF}
\end{figure}

It is to be noted that formulae \eqref{fundphopf} and \eqref{bdsmalla} imply that, for a fixed small $\ep$, the fundamental period of the wave is of order $O(1)$ and the amplitude of the waves is of order $O(\sqrt{\ep})$, respectively. Thus, one expects that when $\ep \to 0^+$ the small-amplitude waves tend to the origin and the linearized operator (formally) becomes a constant coefficient linearized operator around the zero solution, whose spectrum is determined by a dispersion relation that invades the unstable half plane thanks to the sign of $g'(0)$. This observation is the basis of the analysis in \cite{AlPl21}, which proves that unstable point eigenvalues of the constant coefficient operator split into neighboring curves of Floquet spectra of the underlying small amplitude waves. 

Indeed, the Bloch family of linearized operators around the wave,
\begin{equation}
\label{allBloch}
\left\{
\begin{aligned}
\cL^{c(\ep)}_\theta &:= (\partial_x + i\theta/L_\ep)^2 + a^\ep_1(x) (\partial_x + i \theta/L_\ep) + a^\ep_0(x) \Id,\\
\cL^{c(\ep)}_\theta &: \Ldper([0,L_\ep]) \to \Ldper([0,L_\ep]),
\end{aligned}
\right.
\end{equation}
where
\[
a^\ep_1(x) := c(\ep) - f'(\varphi^\ep), \qquad a^\ep_0(x) := g'(\varphi^\ep) - f'(\varphi^\ep)_x,
\]
for each $\theta \in (-\pi,\pi]$, with domain $\cD(\cL^{c(\ep)}_\theta) = \Hdper([0,L_\ep])$, can be transformed into a family of operators, $\tilde{\cL}^{\ep}_\theta$, defined on the periodic space $\Ldper([0,\pi])$, for which the period no longer depends on $\ep$. 

For that purpose, the authors in \cite{AlPl21} make the change of variables, $y := \pi x/ L_\ep$ and $w(y) := u(L_\ep y/\pi)$, and apply \eqref{fundphopf} and $c(\ep) = c_0 + O(\ep)$, in order to recast the spectral problem for the operators in \eqref{allBloch} as $\widetilde{\cL}^{\ep}_\theta w = \lambda w$, 
where
\[
\begin{aligned}
\widetilde{\cL}^{\ep}_\theta := \widetilde{\cL}^{0}_\theta + \sqrt{\ep} \widetilde{\cL}^{1}_\theta &: \Ldper([0,\pi]) \to \Ldper([0,\pi]),\\
\widetilde{\cL}^{0}_\theta &:= (i \theta + \pi \partial_y)^2 + 4 \pi^2 \Id,\\
\widetilde{\cL}^{1}_\theta &:= b_1(y) (i \theta + \pi \partial_y) + b_0(y) \Id,
\end{aligned}
\]
for each $\theta \in (0,\pi]$ and where the coefficients behave like
\[
\begin{aligned}
b_1(y) &:= \frac{1}{\sqrt{\ep}} a_1^\ep(y) = O(1), \\
b_0(y) &:= \frac{1}{\sqrt{\ep}} \big( a_0^\ep(y) - 4 \pi^2 \big) = O(1),
\end{aligned}
\]
as $\ep \to 0^+$ (for details, see \cite{AlPl21}). It can be shown that, for every $\theta$, $\widetilde{\cL}^{1}_\theta$ is $\widetilde{\cL}^{0}_\theta$-bounded (see Lemma 4.6 in \cite{AlPl21}). Therefore, upon application of standard perturbation theory for linear operators (cf. Kato \cite{Kat80}), it is shown that both spectra, $\sigma(\widetilde{\cL}^{\ep}_\theta)$ and $\sigma(\widetilde{\cL}^{0}_\theta)$, are located nearby in the complex plane for $\ep > 0$ small enough.
%
%
%
%

Transforming back into the original coordinates, the same conclusion holds for any fixed, sufficiently small $\ep > 0$ and the associated family of Bloch operators \eqref{allBloch} defined on $\Ldper([0,L_\ep])$. In particular, for $\theta = 0$, the unperturbed operator
\[
\left\{
\begin{aligned}
\cL_0^{c(0)} &= \partial_x^2 + g'(0) \Id, \\
\cL_0^{c(0)} &:  \Ldper([0,L_\ep])  \to \Ldper([0,L_\ep]),
\end{aligned}
\right.
\]
with $\cD(\cL_0^{c(0)}) = \Hdper([0,L_\ep])$, is clearly self-adjoint with a positive eigenvalue $\widetilde{\lambda}_0 = g'(0)$ associated to the constant eigenfunction $\Psi_0(y) = 1 \in \Hdper([0,L_\ep])$. Hence, the operator $\cL^{c(\ep)}_0$ has discrete eigenvalues $\widetilde{\lambda}_j(\ep)$ in a $\sqrt{\ep}$-neighborhood of $\widetilde{\lambda}_0 = g'(0)$ with multiplicities adding up to the multiplicity of $\widetilde{\lambda}_0$ provided that $\ep$ is sufficiently small. Moreover, since $\widetilde{\lambda}_0 >0$ there holds $\Re \lambda_j(\ep) > 0$. Henceforth, we have the following result.

\begin{lemma}
\label{lemspectcond}
For each $0 < \ep \ll 1$ sufficiently small there holds
\begin{equation}
\ptsp(\cL_0^{c(\ep)})_{|\Ldper} \cap \{ \lambda \in \C \, :  \, | \lambda -  g'(0) |\} \neq \varnothing,
\end{equation}
\end{lemma}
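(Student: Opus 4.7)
The plan is to exploit the fact that the spectrum of $\cL_0^{c(\ep)}$ can be tracked via standard perturbation theory from the spectrum of the unperturbed operator $\cL_0^{c(0)} = \partial_x^2 + g'(0)\Id$, using the splitting $\widetilde{\cL}^{\ep}_0 = \widetilde{\cL}^{0}_0 + \sqrt{\ep}\,\widetilde{\cL}^{1}_0$ already introduced in the excerpt. I would work throughout in the rescaled space $\Ldper([0,\pi])$, where the period is independent of $\ep$, and pull the conclusion back at the very end.

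First I would compute the spectrum of the unperturbed operator $\widetilde{\cL}^{0}_0 = \pi^2 \partial_y^2 + 4\pi^2 g'(0)\Id$ (up to the appropriate scaling constants appearing in the excerpt). Because it is self-adjoint with constant coefficients on $\Ldper([0,\pi])$, diagonalization via the Fourier basis $\{e^{2iky}\}_{k \in \Z}$ yields an explicit eigenvalue list. The value corresponding to the Fourier mode $k = 0$ is precisely $\widetilde{\lambda}_0 = g'(0)$, with constant eigenfunction $\Psi_0 \equiv 1$, and one checks that it is isolated and simple, since all other eigenvalues are strictly negative (coming from $-\pi^2(2k)^2 + 4\pi^2 g'(0)$ and similar, which are bounded above away from $g'(0)$). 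Thus $\widetilde{\lambda}_0 = g'(0)$ is an isolated eigenvalue of finite algebraic multiplicity of $\widetilde{\cL}^0_0$.

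Next, having already been told that $\widetilde{\cL}^{1}_0$ is $\widetilde{\cL}^{0}_0$-bounded (Lemma 4.6 of \cite{AlPl21}), I would invoke the standard perturbation-of-isolated-eigenvalues result of Kato (cf. \cite{Kat80}, Chapter IV, \S 3.5, together with the upper semicontinuity of separated parts of the spectrum in \S 3.4): for all sufficiently small $\sqrt{\ep}$, the total projection onto the part of the spectrum of $\widetilde{\cL}^{\ep}_0$ inside a small fixed disk around $g'(0)$ has the same rank as the Riesz projection at $g'(0)$ for $\widetilde{\cL}^{0}_0$. This produces a (nonempty) finite collection of eigenvalues $\widetilde{\lambda}_j(\ep)$ of $\widetilde{\cL}^{\ep}_0$, each lying within a $\sqrt{\ep}$-neighborhood of $g'(0)$, with multiplicities summing to the multiplicity of $g'(0)$. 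In particular, for all $\ep$ sufficiently small these eigenvalues satisfy $\Re \widetilde{\lambda}_j(\ep) > 0$.

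Finally, the rescaling $y = \pi x/L_\ep$ is a bounded invertible map between $\Ldper([0,\pi])$ and $\Ldper([0,L_\ep])$ that preserves the spectrum (and its point part), so each $\widetilde{\lambda}_j(\ep)$ corresponds to an eigenvalue $\lambda_j(\ep) \in \ptsp(\cL_0^{c(\ep)})_{|\Ldper([0,L_\ep])}$ within a $\sqrt{\ep}$-neighborhood of $g'(0)$, yielding the claimed nonempty intersection. The main technical hurdle is verifying the hypotheses of Kato's theorem in the rescaled variables; once $\widetilde{\cL}^{1}_0$ is known to be $\widetilde{\cL}^{0}_0$-bounded (which is cited) the perturbation-theoretic conclusion is immediate, so no additional genuinely new analytical work is required here beyond careful bookkeeping of the change of variables and checking that the disk around $g'(0)$ does not meet any other eigenvalue of $\widetilde{\cL}^{0}_0$.
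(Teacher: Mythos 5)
Your proposal is correct and follows essentially the same approach as the paper, whose ``proof'' of this lemma is just a citation to \cite{AlPl21} while the paragraphs immediately preceding the lemma spell out exactly the rescale-to-$\Ldper([0,\pi])$-then-apply-Kato argument you describe (relative boundedness of $\widetilde{\cL}^1_0$, persistence of the isolated simple eigenvalue under $\sqrt{\ep}$-small perturbation, pull back to $\Ldper([0,L_\ep])$). There are two small bookkeeping slips that do not affect the argument: the rescaled unperturbed operator is $\pi^2\partial_y^2 + 4\pi^2\Id$ with simple eigenvalue $4\pi^2$ (corresponding to the eigenvalue $g'(0)$ of $\cL_0^{c(0)}$ only after undoing the rescaling of the spectral parameter as well), and the neighboring eigenvalues at the Fourier modes $k=\pm 1$ vanish rather than being strictly negative --- but the top eigenvalue is still isolated, simple, and at positive distance from the rest of the spectrum, so the Kato perturbation step goes through exactly as you say.
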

\begin{proof}
See Lemma 4.7 and the proof of Theorem 1.4 in \cite{AlPl21} (in particular, see equation (4.8) in \cite{AlPl21}).
\end{proof}

Therefore, we conclude the existence of an unstable eigenvalue $\lambda(\ep) \in \C$ with $\Re \lambda(\ep) > 0$ and an eigenfunction $\Psi^\ep \in \Hdper([0,L_\ep])$, such that $\cL_0^{c(\ep)} \Psi^\ep = \lambda(\ep) \Psi^\ep$, that is, the spectral instability property holds. Hence, upon application of Theorem \ref{mainthem}, we have the following

\begin{theorem}[orbital instability of small-amplitude periodic waves]
\label{teoorbsmall}
Under assumptions \emph{\hyperref[A1]{(A$_1$)}} - \emph{\hyperref[A4]{(A$_4$)}}, there exists $\bar{\ep}_0 \in (0, \ep_0)$ sufficiently small such that each periodic wave of Theorem \ref{thmexbded}, $u(x,t) = \varphi^\ep(x - c(\ep)t)$, with $\ep \in (0, \bar{\ep}_0)$, is orbitally unstable in the periodic space $X_2 = \Hdper([0,L_\ep])$ under the flow of the viscous balance law \eqref{VBL}.
\end{theorem}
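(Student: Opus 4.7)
The plan is to reduce the statement to a direct verification of the hypotheses of the orbital instability criterion in Theorem \ref{mainthem}, applied to each member of the family of small-amplitude periodic waves constructed in Theorem \ref{thmexbded}. Since assumptions \hyperref[A1]{(A$_1$)} and \hyperref[A2]{(A$_2$)} already supply $f \in C^4(\R)$ and $g \in C^3(\R)$, and since the profile $\varphi^\ep$ is of class $C^3(\R)$ with fundamental period $L_\ep$ by Theorem \ref{thmexbded}, the only nontrivial hypothesis to check is the spectral instability property: for each sufficiently small $\ep>0$ one needs an eigenvalue $\lambda(\ep)\in\C$ with $\Re\lambda(\ep)>0$ of the linearized operator
\[
\cL_0^{c(\ep)} : \Ldper([0,L_\ep])\to\Ldper([0,L_\ep]),\qquad \cD(\cL_0^{c(\ep)})=\Hdper([0,L_\ep]),
\]
together with a corresponding eigenfunction $\Psi^\ep \in \Hdper([0,L_\ep])$.

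For this, I would invoke Lemma \ref{lemspectcond} directly. That lemma, established in \cite{AlPl21} via the rescaling $y=\pi x/L_\ep$ and Kato's perturbation theory applied to the family $\widetilde{\cL}^{\ep}_0 = \widetilde{\cL}^{0}_0 + \sqrt{\ep}\,\widetilde{\cL}^{1}_0$ on $\Ldper([0,\pi])$, guarantees the existence of eigenvalues of $\cL_0^{c(\ep)}$ in a $\sqrt{\ep}$-neighborhood of the unperturbed eigenvalue $g'(0)>0$. The step where one must be careful is precisely here: one needs to pick a threshold $\bar{\ep}_0\in(0,\ep_0)$ small enough so that every eigenvalue of $\cL_0^{c(\ep)}$ lying in this $\sqrt{\ep}$-neighborhood satisfies $|\lambda(\ep)-g'(0)|<\tfrac12 g'(0)$, which in turn forces $\Re\lambda(\ep)>\tfrac12 g'(0)>0$ for all $\ep\in(0,\bar{\ep}_0)$. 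This is the main (and essentially the only) quantitative point; it follows from the $O(\sqrt{\ep})$ control on the perturbation provided by the construction in \cite{AlPl21}.

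With the spectral instability property in hand for every $\ep\in(0,\bar{\ep}_0)$, all hypotheses of Theorem \ref{mainthem} are satisfied by the wave $u(x,t)=\varphi^\ep(x-c(\ep)t)$, with $X=\Hdper([0,L_\ep])$ corresponding to the fundamental period $L_\ep$ of $\varphi^\ep$. The theorem then yields the orbital instability of $\varphi^\ep$ in $\Hdper([0,L_\ep])$ under the flow of \eqref{VBL}, which is exactly the conclusion sought. Since the verification is essentially bookkeeping on top of the already established spectral result and the general criterion, no additional well-posedness or bifurcation argument is needed.
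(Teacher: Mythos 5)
Your proposal is correct and follows essentially the same route as the paper: it invokes Lemma~\ref{lemspectcond} (which rests on the $\sqrt{\ep}$-perturbation analysis of \cite{AlPl21}) to secure an unstable eigenvalue of $\cL_0^{c(\ep)}$ with associated eigenfunction in $\Hdper([0,L_\ep])$, and then feeds this spectral instability property directly into the criterion of Theorem~\ref{mainthem}. Your added remark that $\bar{\ep}_0$ should be chosen so that the perturbed eigenvalue stays within, say, $\tfrac12 g'(0)$ of $g'(0)$ (hence has positive real part) makes explicit the quantitative step the paper leaves implicit when it asserts $\Re\lambda_j(\ep)>0$.
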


\begin{remark}
It is to be observed that, in the case of small amplitude waves, the instability is due to a structural assumption on the model equations: the instability of the origin as an equilibrium point of the reaction generates an unstable eigenvalue of an associated constant coefficient operator, from which the linearization of a small-amplitude wave represents a perturbation. 
\end{remark}

\subsection{Orbital instability of large-period waves}

The analysis of \cite{AlPl21} also reveals the existence of a different family of periodic waves. Under further assumptions {\hyperref[A5]{(A$_5$)}} and {\hyperref[A6]{(A$_6$)}}, one guarantees that, for another critical value of the speed, the point $(1,0)$ in the phase plane is the (saddle) base of a homoclinic orbit, representing a traveling pulse solution to \eqref{VBL}. Then, from a global bifurcation argument (see, e.g., \cite{SSTC01}) one deduces the existence of large period waves in a vicinity of the homoclinic orbit when the speed tends to the critical value (the speed of the traveling pulse) or, equivalently, when their period goes to infinity. This is the content of the following

\begin{theorem}[existence of large period waves \cite{AlPl21}]
\label{thmexlarge}
Under assumptions \emph{\hyperref[A1]{(A$_1$)}} - \emph{\hyperref[A3]{(A$_3$)}}, \emph{\hyperref[A5]{(A$_5$)}} and \emph{\hyperref[A6]{(A$_6$)}}, there is a critical speed given by
\begin{equation}
\label{defc1}
c_1 := \frac{\int_{u_*}^1 f'(s) \gamma(s) \, ds}{\int_{u_*}^1 \gamma(s) \, ds},
\end{equation}
such that there exists a traveling pulse solution (homoclinic orbit) to equation \eqref{VBL} of the form  $u(x,t) = \varphi^0(x - c_1 t)$, traveling with speed $c_1$ and satisfying $\varphi^0 \in C^3(\R)$, $\varphi^0(x) \to 1$ as $x \to \pm\infty$, with
\begin{equation}
\label{expulse}
|\varphi^0(x) - 1|, |(\varphi^0)'(x)| \leq C e^{-\kappa |x|},
\end{equation}
for all $x \in \R$ and some $\kappa > 0$. In addition, one can find $\ep_1 > 0$ sufficiently small such that, for each $0 < \ep < \ep_1$ there exists a unique periodic traveling wave solution to the viscous balance law \eqref{VBL} of the form $u(x,t) = \varphi^\ep(x - c(\ep)t)$, traveling with speed $c(\ep) = c_1 + \ep$ if $f'(1) < c_1$ or $c(\ep) = c_1 - \ep$ if $f'(1) > c_1$, with fundamental period
\begin{equation}
\label{fundphomo}
L_\ep = O(| \log \ep |) \to \infty, 
\end{equation}
and amplitude
\begin{equation}
\label{bdO1}
|\varphi^\ep(x)|, |(\varphi^\ep)'(x)| = O(1),
\end{equation}
as $\ep \to 0^+$. Moreover, these periodic orbits converge to the homoclinic or traveling pulse solution as $\ep \to 0^+$ and satisfy the bounds (after a suitable reparametrization of $x$),
\begin{equation}
\label{bounds}
\begin{aligned}
\sup_{x \in [-\frac{1}{2}L_\ep, \frac{1}{2}L_\ep]} \left( |\varphi^0(x) - \varphi^\ep(x)| + |(\varphi^0)'(x) - (\varphi^\ep)'(x)  |\right) &\leq C \exp \Big( \!-  \frac{\kappa}{2} L_\ep\Big), \\
| c_1 - c(\ep)| &\leq C \exp \big( \!- \kappa L_\ep \big),
\end{aligned}
\end{equation}
for some uniform $C > 0$, the same $\kappa > 0$ and for all $0 < \ep < \ep_1$.
\end{theorem}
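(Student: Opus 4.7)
The plan is to work in the phase plane of the autonomous first-order system derived from \eqref{profileq},
\begin{equation*}
\varphi' = v, \qquad v' = (f'(\varphi)-c)v - g(\varphi).
\end{equation*}
By \hyperref[A2]{(A$_2$)} the only equilibria are $(0,0)$ and $(1,0)$, and since $g'(1) < 0$ the linearization at $(1,0)$ has negative determinant, so $(1,0)$ is a hyperbolic saddle for every speed $c$, with smooth one-dimensional stable and unstable manifolds $W^{s,u}(c)$ depending smoothly on $c$. The strategy is first to locate a distinguished speed $c = c_1$ at which $W^{s}(c)$ and $W^{u}(c)$ coincide along a single loop (the traveling pulse $\varphi^0$), and then to invoke a saddle-loop bifurcation theorem to produce the branch of large-period periodic orbits as $c$ moves off $c_1$.

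To identify $c_1$ I would perform a Melnikov-type balance: multiplying the profile ODE by $v$ yields the identity $\tfrac{d}{dx}(\tfrac{1}{2}v^2 + G(\varphi)) = (f'(\varphi)-c)v^2$ with $G' = g$, so any closed loop in the phase plane must satisfy $\oint (f'(\varphi)-c)v^2\,dx = 0$. Parametrizing the candidate homoclinic at leading order by the Hamiltonian level $v^2 = 2\int_{\varphi}^1 g(s)\,ds = \gamma(\varphi)^2$, which closes up at the turning point $u_*$ thanks to \hyperref[A3]{(A$_3$)}, converts this balance into precisely the formula \eqref{defc1} for $c_1$. To promote this necessary condition into a genuine coincidence of $W^{s}(c)$ and $W^{u}(c)$, I would introduce a signed splitting distance between the two manifolds on a transverse section to the Hamiltonian loop and verify that hypothesis \hyperref[A5]{(A$_5$)} is equivalent to the nonvanishing of its $c$-derivative at $c_1$ (the arclength weight $\sqrt{1+\gamma'^2}$ in \eqref{nondeg} is exactly what appears when one projects onto the normal of the limiting level curve). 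The implicit function theorem then produces a unique $c_1$ with associated homoclinic $\varphi^0$, whose exponential decay \eqref{expulse} is standard hyperbolic stable manifold theory.

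With $\varphi^0$ in hand, I would apply an Andronov--Leontovich type saddle-loop bifurcation theorem (see \cite{SSTC01}), which requires (i) transverse unfolding of the homoclinic as $c$ varies (already provided by \hyperref[A5]{(A$_5$)}) and (ii) a saddle-quantity non-resonance $\lambda_+(c_1)+\lambda_-(c_1) \neq 0$. For our system the trace of the linearization at $(1,0)$ is $f'(1)-c_1$, and substituting \eqref{defc1} shows that condition (ii) is equivalent to \hyperref[A6]{(A$_6$)}; its sign, moreover, determines whether the periodic orbits appear at $c > c_1$ or $c < c_1$, recovering the dichotomy in the statement. The theorem yields a unique smooth branch $\{\varphi^\ep\}$ of periodic orbits converging to $\varphi^0$; the logarithmic period \eqref{fundphomo} comes from the fact that transit time near the saddle scales like $|\lambda_+(c_1)|^{-1}|\log\eta|$ with distance parameter $\eta \asymp \ep$, while on the bulk of the cycle the orbit shadows $\varphi^0$ with $O(1)$ amplitude, giving \eqref{bdO1}. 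Comparing $\varphi^\ep$ with $\varphi^0$ along the transit portion via Gronwall, together with the relation $|c_1 - c(\ep)| \asymp e^{-\lambda_+(c_1) L_\ep}$, produces the exponentially small bounds \eqref{bounds}.

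The main obstacle I anticipate is the second step: upgrading the formal Melnikov computation into a rigorous transversality statement at $c = c_1$. This requires parametrizing the invariant manifolds $W^{s,u}(c)$ globally by forward/backward continuation from the saddle, defining a smooth splitting function via projection onto the normal of the unperturbed loop, and computing that its $c$-derivative at $c_1$ is proportional to the difference of the two sides of \eqref{nondeg}. Once this transversality is secured, the remainder of the argument is a careful but standard application of saddle-connection bifurcation theory.
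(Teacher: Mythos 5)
Your high-level plan matches the paper's described approach: the Remark following Theorem \ref{thmexlarge} attributes the proof, deferred to \cite{AlPl21}, to a Melnikov argument for the pulse at $c_1$ followed by Andronov--Leontovich's theorem for the bifurcating cycles. Your identification of \hyperref[A6]{(A$_6$)} with the saddle-quantity non-resonance is correct: the trace of the linearization at $(1,0)$ is $f'(1)-c_1$, and substituting \eqref{defc1} shows this is nonzero precisely under \eqref{saddle}. The logarithmic period from slow passage near the saddle, the $O(1)$ amplitude, and the Gronwall-type exponential closeness of $\varphi^\ep$ to $\varphi^0$ are all standard saddle-loop phenomena and consistent with the cited proof.

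There is, however, a substantive gap in how you assign \hyperref[A5]{(A$_5$)} to the Melnikov transversality. For the split function $\beta(c)$ measured along a transversal to the homoclinic, standard planar theory gives $\beta'(c_1) \propto -\int_{-\infty}^{\infty} \exp\big(-\!\int_0^t (f'(\varphi^0)-c_1)\,d\tau\big)\,(\varphi^0)'(t)^2\,dt$, whose integrand is one-signed, so $\partial_c\beta(c_1)\neq 0$ holds automatically, irrespective of \eqref{nondeg}. Hence \hyperref[A5]{(A$_5$)} cannot simply encode the non-vanishing of the $c$-derivative of the splitting distance; it must play a different role. Relatedly, your derivation of \eqref{defc1} substitutes the Hamiltonian level $v^2=\gamma(\varphi)^2$ for the actual pulse orbit, which is only a leading-order step: the exact identity obtained by integrating the energy balance over the pulse is $\int_{-\infty}^{\infty}(f'(\varphi^0)-c_1)(\varphi^0)'^2\,dx=0$, with $\varphi^0$ the true profile, and passing from this to the closed-form \eqref{defc1} needs either an identified small parameter or a structural reason that the sketch does not supply, even though the theorem asserts that the pulse speed is exactly the stated $c_1$. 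You flag this region of the argument as the main obstacle, and that is precisely where the heuristic balance does not by itself reveal either the exact role of the arclength-weighted condition \eqref{nondeg} or why $c_1$ equals the closed-form expression.
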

\begin{proof}
See the proof of Theorem 1.3, \S 2.3, in \cite{AlPl21} for details.
\end{proof}

\begin{remark}
The proof of this existence result is based on two components. First, it establishes the existence of a traveling pulse for equation \eqref{VBL}, traveling with speed $c = c_1$ given in \eqref{defc1}. This is a consequence of Melnikov's integral method. Second, upon application of Andronov-Leontovich's theorem in the plane it is shown that there exists a family of periodic waves emerging from the homoclinic orbit; the family is parametrized by $\ep = | c_1 - c(\ep)|$, for which each wave travels with speed $c = c(\ep)$ and converges to the traveling pulse as $\ep \to 0^+$. The fundamental period of the family of periodic waves, $L_\ep$, converges to $\infty$ as $\ep \to 0^+$ at order $O(|\log \ep|)$. As an illustration, Figure \ref{figHomoBL} shows the emergence of large period waves for the logistic Buckley-Leverett equation \eqref{LogBLmodel}. For example, it can be proved that the value of the speed of the homoclinic orbit defined in \eqref{defc1}, from which the periodic loops with large period bifurcate, is $c_1 \approx 0.589097$ (see \cite{AlPl21}). Since $c_1 > f'(1) = 0$, Theorem \ref{thmexlarge} then implies that the family of periodic waves with large period emerge for speed values in a neighborhood above the value $c_1$, that is, for $c \in (0.5891, 0.5891 + \ep)$ with $\ep > 0$ small. Figure \ref{figFaseHomoBL} shows a numerical approximation (in the phase plane) of the homoclinic loop to the ODE \eqref{profileq} with speed $c_1$ (dashed line in blue) and of a large-period wave from the family with speed $c \approx c_1 + 0.025$ (continuous line in red). Figure \ref{figWaveHomoBL} shows numerical approximations of the graph (in red) of the large period wave $\varphi$ as a function of $x$, together with the traveling pulse (dashed, blue line).
\end{remark}

\begin{figure}[t]
\begin{center}
\subfigure[$(\varphi,\varphi')$]{\label{figFaseHomoBL}\includegraphics[scale=.465, clip=true]{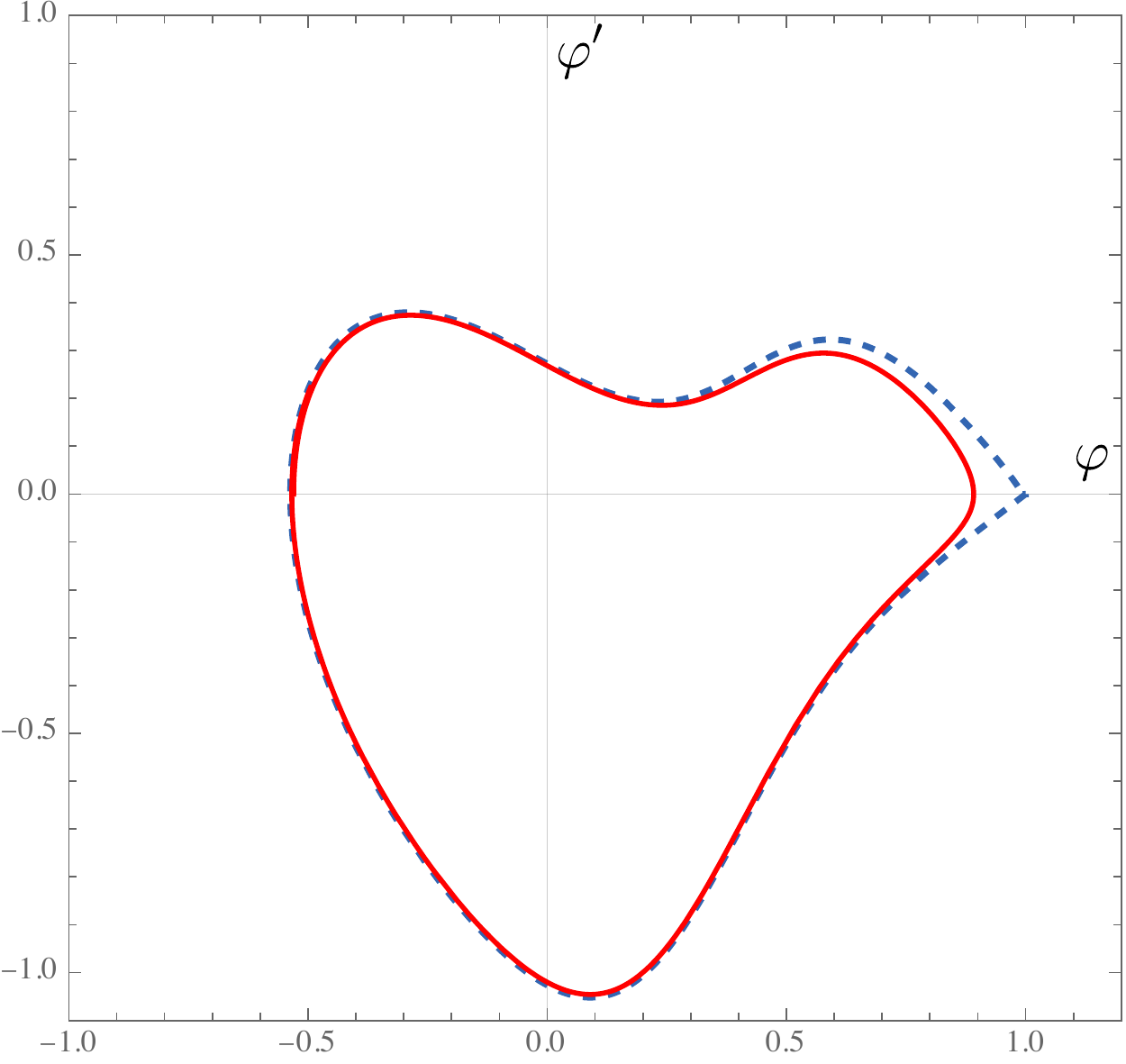}}
\subfigure[$\varphi = \varphi(x)$]{\label{figWaveHomoBL}\includegraphics[scale=.465, clip=true]{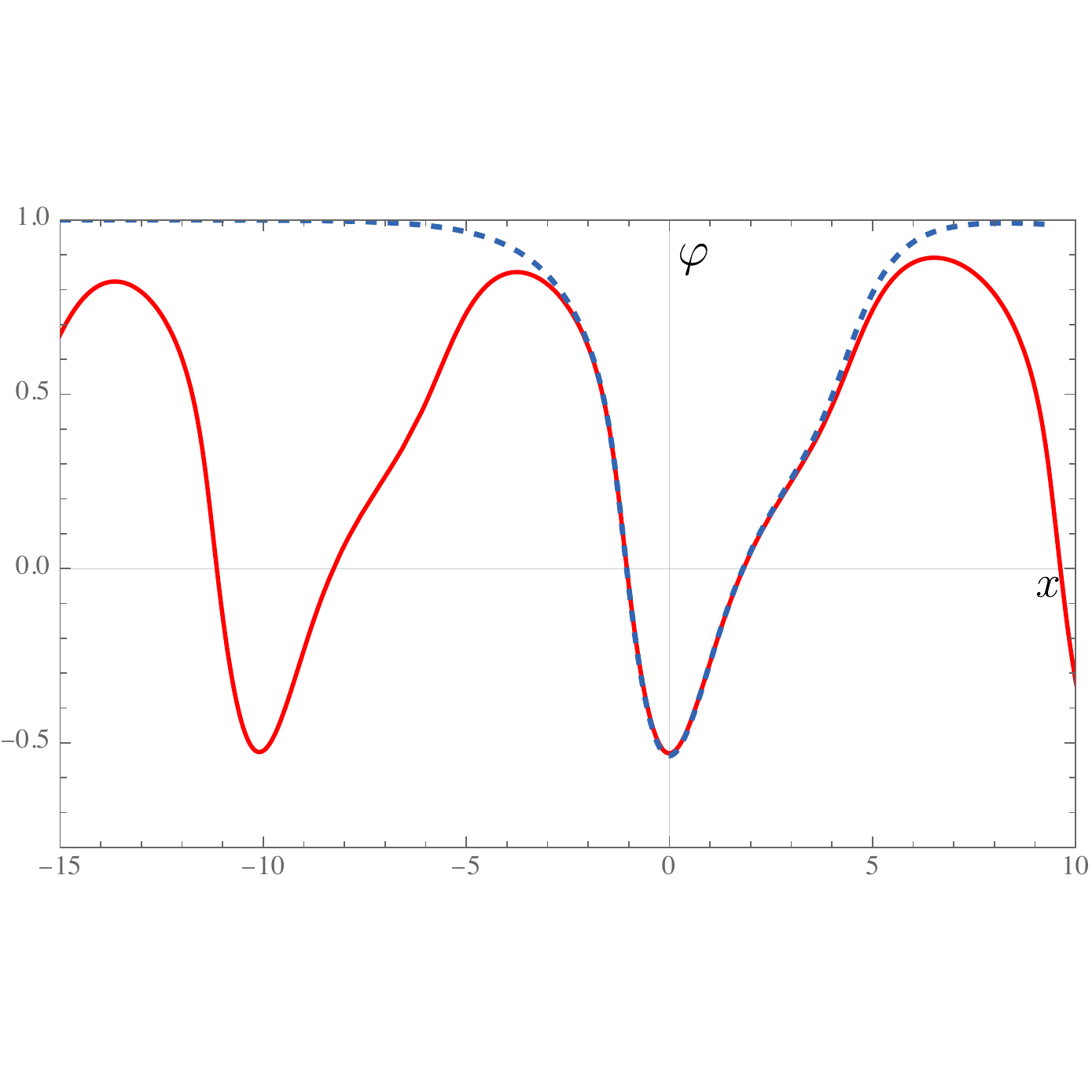}}
\end{center}
\caption{\small{Large period waves for the logistic Buckley-Leverett equation \eqref{LogBLmodel}. Panel (a) shows numerical approximations in the phase plane of both the homoclinic loop (traveling pulse) for equation \eqref{LogBLmodel} with speed value $c_1 \approx 0.5891$ (in dashed blue line) and the periodic wave nearby with speed value $c_1  + \epsilon$, $\epsilon \approx 0.025$ (solid, red line). Panel (b) shows numerical approximations of the graph (solid, red line) of the large period wave $\varphi$ as a function of $x$, together with the traveling pulse (dashed, blue line). The period of the wave is of order $O(| \log \epsilon |) \approx O(3.69)$.  (Color online.)}}\label{figHomoBL}
\end{figure}

The proof of existence of this family of waves also underlies the tools to show that their Floquet spectrum is unstable. For instance, if we linearize the equation around the pulse, we obtain the following linear operator:
\begin{equation}
\label{LbarR}
\begin{aligned}
\bar{\cL}^0 &:= \partial_x^2 + \bar{a}_1^0(x) \partial_x + \bar{a}_0^0(x) \Id,\\
\bar{\cL}^0 &: \,  L^2(\R) \longrightarrow L^2(\R).
\end{aligned}
\end{equation}
with smooth coefficients
\[
\begin{aligned}
 \bar{a}_1^0(x) &:= c_1 -  f'(\varphi^0(x)),\\
 \bar{a}_0^0(x) &:= g'(\varphi^0(x)) - f'(\varphi^0(x))_x,
\end{aligned}
\]
which decay exponentially to finite limits as $x \to \pm \infty$; more precisely,
\begin{equation}
\label{expconvpulse}
| \bar{a}_1^0(x) - \bar{a}_1^{\infty} | + | \bar{a}_0^0(x) - \bar{a}_0^{\infty} | \leq C e^{- \kappa |x|}, 
\end{equation}
for all $x \in \R$ with $\bar{a}_1^{\infty} := c_1 - f'(1)$, $\bar{a}_0^{\infty} := g'(1)$. This behavior holds because of the exponential decay of the traveling pulse to hyperbolic end points (see \eqref{expulse}). The operator $\bar{\cL}^0$ is closed and densely defined in $L^2(\R)$ with domain $\cD(\bar{\cL}^0) = H^2(\R)$. Moreover, $\bar{\cL}^0$ is of Sturmian type (see, e.g., Kapitula and Promislow \cite{KaPro13}, \S 2.3) and, upon application of standard Sturm-Liouville theory, we have the following instability result.
\begin{lemma}
\label{theoinspulse}
The traveling pulse solution of Theorem \ref{thmexlarge}, $\varphi^0$, is spectrally unstable; more precisely, there exists $\bar{\lambda}_0 > 0$ such that $\bar{\lambda}_0 \in \ptsp(\bar{\cL}^0)$. Moreover, this eigenvalue is simple.
\end{lemma}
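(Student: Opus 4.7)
The plan is to exploit the Sturmian character of the operator $\bar{\cL}^0$ together with the fact that the derivative of the pulse profile furnishes an explicit eigenfunction at zero. First, note that differentiation of the pulse profile equation \eqref{profileq} at $c=c_1$, $\varphi=\varphi^0$, yields (after rearrangement)
\[
(\varphi^0)''' + \bar{a}_1^0(x) (\varphi^0)'' + \bar{a}_0^0(x) (\varphi^0)' = 0,
\]
that is, $\bar{\cL}^0 (\varphi^0)' = 0$. Because of the exponential decay in \eqref{expulse}, one has $(\varphi^0)' \in H^2(\R) = \cD(\bar{\cL}^0)$, and it is not identically zero, so $0 \in \ptsp(\bar{\cL}^0)$.

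Second, I would locate the essential spectrum of $\bar{\cL}^0$. Since the coefficients $\bar{a}_j^0(x)$ converge exponentially to constants $\bar{a}_j^\infty$ as $|x| \to \infty$ (see \eqref{expconvpulse}), standard results on asymptotically constant operators (cf.\ Kapitula and Promislow \cite{KaPro13}, \S3.1) imply that $\ess(\bar{\cL}^0)$ is determined by the Fourier symbol of the limiting constant-coefficient operator, i.e., it is contained in the parabolic curve
\[
\{ -k^2 + i \bar{a}_1^\infty k + \bar{a}_0^\infty \, : \, k \in \R \} \subset \{ \lambda \in \C \, : \, \Re \lambda \leq g'(1) < 0\},
\]
since $\bar{a}_0^\infty = g'(1) < 0$ by hypothesis \hyperref[A2]{(A$_2$)}. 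In particular, the entire closed right half plane $\{\Re \lambda \geq 0\}$ lies in the resolvent set or the point spectrum of $\bar{\cL}^0$.

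Third, I would invoke Sturm--Liouville oscillation theory for the operator $\bar{\cL}^0$. Concretely, conjugating by the exponential weight $\mu(x) := \exp\bigl(\tfrac{1}{2}\int_0^x \bar{a}_1^0(s)\,ds\bigr)$ transforms the eigenvalue problem $\bar{\cL}^0 u = \lambda u$ into a Schr\"odinger-type problem $v'' + \tilde q(x) v = \lambda v$ with $v = \mu u$ and
\[
\tilde q(x) = \bar{a}_0^0(x) - \tfrac{1}{4}(\bar{a}_1^0(x))^2 - \tfrac{1}{2}(\bar{a}_1^0)'(x),
\]
which decays exponentially to a constant at $\pm \infty$. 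Since $\mu$ is positive and non-vanishing, the zeros of $u$ and $v$ coincide. By the classical Sturm oscillation theorem applied to this self-adjoint Schr\"odinger problem on $\R$ (see Kapitula--Promislow \cite{KaPro13}, \S2.3), the eigenvalues of $\bar{\cL}^0$ located to the right of the essential spectrum form a finite (or real decreasing) sequence of simple eigenvalues $\bar{\lambda}_0 > \bar{\lambda}_1 > \ldots$, whose corresponding eigenfunctions have $0, 1, 2, \ldots$ zeros respectively. Inspection of the pulse $\varphi^0$ (homoclinic to $1$ with a single extremum) shows that $(\varphi^0)'$ vanishes at exactly one point of $\R$. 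Therefore the eigenvalue $0$ associated with $(\varphi^0)'$ must be $\bar{\lambda}_1$, forcing the existence of a strictly larger simple eigenvalue $\bar{\lambda}_0 > 0$ with a nowhere-vanishing eigenfunction.

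The main obstacle is the fourth step: the operator $\bar{\cL}^0$ is not self-adjoint, so the Sturmian theorem has to be transported through the gauge transformation with some care. One must verify that the conjugation preserves the point spectrum (this is true as an algebraic identity on eigenpairs even though $\mu$ is not bounded, provided we restrict to eigenfunctions that decay), and that the ground-state eigenvalue of the self-adjoint problem indeed gives rise to an $L^2(\R)$ eigenfunction of the original $\bar{\cL}^0$ (which is ensured by the exponential decay of $\tilde q$ to a negative constant, making the Schr\"odinger eigenfunction above the essential spectrum decay exponentially, and the weight $\mu^{-1}$ contributes at most exponential growth/decay bounded by the corresponding decay rate because $\bar{a}_1^0$ is bounded). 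A direct proof using the Evans function or the shooting argument of Jones (see \cite{KaPro13}) would also work and is perhaps more transparent in handling the non-self-adjointness.
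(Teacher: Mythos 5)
Your proposal is correct and follows essentially the same approach the paper invokes (via [AlPl21], Theorem 5.1): the operator $\bar{\cL}^0$ is of Sturmian type, $(\varphi^0)'$ lies in its kernel with exactly one zero, and the Sturm oscillation theorem, applied after conjugating by the exponential weight to a self-adjoint Schr\"odinger form, forces the existence of a simple ground state eigenvalue $\bar{\lambda}_0>0$. The concern you flag about the gauge transformation preserving $L^2$-membership is legitimate but resolves itself here: for real $\lambda > g'(1)$ the asymptotic characteristic roots of $\bar{\cL}^0 - \lambda$ shift under multiplication by $\mu$ precisely to the roots of the self-adjoint problem, so an $L^2$ decaying eigenfunction of one corresponds to an $L^2$ decaying eigenfunction of the other, and both $0$ and $\bar{\lambda}_0$ lie in that range.
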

\begin{proof}
See Theorem 5.1 in \cite{AlPl21}.
\end{proof}

The pioneering work by Gardner \cite{Grd97} characterized the spectrum of the linearized operator around a periodic wave of the approximating family and related it to that of the operator $\bar{\cL}^0$. Gardner proved the convergence of both spectra in the infinite period limit and, under very general conditions, that loops of continuous periodic spectra bifurcate from isolated point spectra of the limiting homoclinic wave. Hence, the typical spectral instability of the traveling pulse determines the spectral instability of the periodic waves. Thanks to the convergence estimates \eqref{bounds}, the authors in \cite{AlPl21} verified the hypotheses of a recent refinement of Gardner's result due to Yang and Zumbrun \cite{YngZ19} in order to conclude the spectral instability of the family (see Corollary 4.1 and Proposition 4.2 in \cite{YngZ19}, as well as Theorems 5.2 and 1.5 in \cite{AlPl21}).

In order to apply our orbital instability criterion, however, we need to verify the spectral instability property for the particular Bloch operator with $\theta = 0$. For that purpose, we state the following result which is, in fact, a Corollary of the proof of Theorem 1.5 in \cite{AlPl21}.
\begin{corollary}
\label{corBloch0}
Consider the eigenvalue problem for the Bloch operator \eqref{allBloch} linearized around the family of waves of Theorem \ref{thmexlarge}. Let $\cC \subset \C$ be a positively oriented simple circle of fixed radius with $\cC\subset \{\lambda\in \C: \Re \lambda>0\}$ containing $\bar{\lambda}_0$ (which is the simple, real and unstable isolated eigenvalue of $\bar{\cL}^0$) in its interior, and containing no other eigenvalue of $\bar{\cL}^0$ in the closure of $\cC$. Then for sufficiently small $0 < \ep \ll 1$ and for each $-\pi < \theta \leq \pi$, the Bloch wave spectral problem $\cL^{c(\ep)}_\theta w = \lambda w$ has exactly one point eigenvalue $\lambda = \lambda(\ep)$ in the interior of $\cC$.
\end{corollary}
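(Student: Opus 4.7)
The proof would proceed as a direct consequence of the Gardner--Yang--Zumbrun perturbation argument already deployed in the proof of Theorem 1.5 of \cite{AlPl21}; the task here is to extract the sharper \emph{count} of exactly one eigenvalue inside $\cC$ per Bloch parameter $\theta$. The overall strategy I would follow is to compare the Bloch operator $\cL^{c(\ep)}_\theta$ on the $\ep$-dependent space $\Ldper([0,L_\ep])$ with the limiting operator $\bar{\cL}^0$ on $L^2(\R)$, exploiting the convergence bounds \eqref{bounds} together with the exponential tails \eqref{expulse} of the traveling pulse.

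Concretely, I would proceed by Riesz projection. Thanks to Lemma \ref{theoinspulse}, $\bar{\lambda}_0$ is a simple, isolated eigenvalue of $\bar{\cL}^0$ and, by hypothesis, $\overline{\cC}$ contains no other element of $\sigma(\bar{\cL}^0)$, so the spectral projection
\[
P_0 := -\frac{1}{2\pi i} \oint_\cC (\bar{\cL}^0 - \lambda)^{-1} \, d\lambda
\]
is well-defined and has rank one. The estimates \eqref{bounds} translate, after a suitable translation, into exponential convergence of the coefficients $a_1^\ep(x), a_0^\ep(x)$ to $\bar{a}_1^0(x), \bar{a}_0^0(x)$ on the fundamental cell $[-L_\ep/2,L_\ep/2]$, while away from the pulse core the exponential convergence \eqref{expconvpulse} ensures that both sets of coefficients are uniformly close to common asymptotic constants. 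Feeding this structural information into Corollary 4.1 and Proposition 4.2 of \cite{YngZ19}, whose hypotheses are those already verified in \cite{AlPl21} to obtain spectral instability of the whole family, one deduces norm convergence of the Bloch Riesz projection
\[
P_\ep(\theta) := -\frac{1}{2\pi i} \oint_\cC (\cL^{c(\ep)}_\theta - \lambda)^{-1} \, d\lambda
\]
to $P_0$, uniformly in $\theta \in (-\pi,\pi]$ as $\ep \to 0^+$. By continuity of rank under norm convergence of finite-rank projections, this forces $\mathrm{rank}\, P_\ep(\theta) = 1$, which is exactly the claim that $\cL^{c(\ep)}_\theta$ has precisely one point eigenvalue $\lambda(\ep)$ inside $\cC$.

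The main obstacle is conceptual rather than computational: the Bloch operators act on Hilbert spaces $\Ldper([0,L_\ep])$ whose underlying domain depends on $\ep$, so the comparison with $\bar{\cL}^0$ on $L^2(\R)$ cannot be phrased as a direct norm-resolvent perturbation in a fixed ambient space. The Yang--Zumbrun framework is precisely designed to circumvent this difficulty by rewriting the Bloch eigenvalue problem as an Evans-function matching problem and gluing it, via the exponential decay of the pulse tails, to the whole-line Evans function of $\bar{\cL}^0$. The resulting reduced (scalar) dispersion relation depends jointly continuously on $(\theta,\ep)$, is nonzero for $\ep = 0$ on $\cC\setminus\{\bar{\lambda}_0\}$, and has a single simple zero at $\bar{\lambda}_0$; the argument principle then preserves the zero count as $(\theta,\ep)$ varies in a small neighborhood of $\{(\theta,0) : \theta \in (-\pi,\pi]\}$, producing the unique $\lambda(\ep) \in \mathrm{int}(\cC)$ for each $\theta$ and all sufficiently small $\ep > 0$.
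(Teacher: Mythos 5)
Your final argument — comparing the periodic Evans function of $\cL^{c(\ep)}_\theta$ to the whole-line Evans function of $\bar{\cL}^0$, using the exponential tails \eqref{expulse} and the convergence rates \eqref{bounds}, then invoking the argument principle and simplicity of $\bar{\lambda}_0$ — is essentially the argument the paper makes. The difference is in the reference theorem invoked and in your opening framing. The paper writes the eigenvalue problems as first-order systems $w' = \A^\ep(x,\lambda) w$, $w' = \A^0(x,\lambda) w$, derives the estimates $|\A^0(x,\lambda)-\A^0_\infty(\lambda)|\leq C(M)e^{-\kappa|x|}$ and $|\A^0(x,\lambda)-\A^\ep(x,\lambda)|\leq C(M)e^{-\kappa L_\ep/2}$ on the half-cell, and verifies Hypotheses 1--3 of Gardner's original Theorem 1.2 in \cite{Grd97} (exponential convergence, consistent splitting for $\Re\lambda>g'(1)$, Sturm--Liouville hyperbolicity), concluding directly that the Bloch spectral problem has exactly one eigenvalue inside $\cC$ for each $\theta$, because $\bar{\lambda}_0$ is simple. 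You instead cite the Yang--Zumbrun refinement (Corollary 4.1 and Proposition 4.2 of \cite{YngZ19}), which \cite{AlPl21} indeed used for the original spectral instability statement; that works too, but the paper deliberately falls back on Gardner here since his Theorem 1.2 already gives the sharp \emph{count} needed (and the paper even notes a small bookkeeping point that Gardner's hypotheses are stated on a period $2L_\ep$).

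The Riesz-projection framing at the start of your proposal does not work as written, for exactly the reason you yourself flag: $P_\ep(\theta)$ and $P_0$ act on different Hilbert spaces, so ``norm convergence of projections'' has no direct meaning without first constructing an identification, which is precisely what Gardner/Yang--Zumbrun avoid by passing to Evans functions. Your self-correction in the second paragraph is the real argument, so this is more a matter of exposition than a gap, but the projection language should be dropped or made precise (e.g., by comparing $P_\ep(\theta)$ with the Riesz projection of a reference operator on the \emph{same} space $\Ldper([0,L_\ep])$, not with $P_0$ on $L^2(\R)$). Either route buys the same conclusion: your Yang--Zumbrun version is slightly more modern and was already primed in \cite{AlPl21}, while the Gardner route used by the paper makes the exact eigenvalue count ($\mathrm{rank}=\mathrm{mult}(\bar{\lambda}_0)=1$) an explicit part of the cited theorem's conclusion, which is cleanest for the corollary's purpose.
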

\begin{proof}
Let us define the matrix coefficients
\[
\A^0(x,\lambda) := \begin{pmatrix} 0 & 1 \\ \lambda - \bar{a}_0^0(x) & - \bar{a}_1^0(x) \end{pmatrix},
\]
for $x \in \R$ and $\lambda \in \C$. These coefficients are clearly analytic in $\lambda$ and of class $C^1(\R;\C^{2 \times 2})$ as functions of $x \in \R$. Moreover, they have asymptotic limits given by
\[
\A^0_\infty(\lambda) := \lim_{x \to \pm \infty} \A^0(x,\lambda) = \begin{pmatrix} 0 & 1 \\ \lambda - \bar{a}_0^{\infty} & - \bar{a}_1^{\infty} \end{pmatrix} = \begin{pmatrix} 0 & 1 \\ \lambda - g'(1) & - c_1 + f'(1) \end{pmatrix}.
\]
Thanks to exponential decay \eqref{expulse} of the traveling pulse and from continuity of the coefficients, we reckon that, for any $|\lambda| \leq M$ with some $M > 0$, there exists a constant $C(M) > 0$ such that
\begin{equation}
\label{laH2K}
| \A^0(x,\lambda) - \A^0_\infty(\lambda)| \leq C(M) e^{-\kappa |x|},
\end{equation}
for all $x \in \R$. Likewise, define the coefficients,
\[
\A^\ep(x,\lambda) = \begin{pmatrix} 0 & 1 \\ \lambda - \bar{a}_0^{\ep}(x) & - \bar{a}_1^{\ep}(x)\end{pmatrix},
\]
which are analytic in $\lambda \in \C$, continuous in $\ep > 0$ and of class $C^1(\R;\C^{2 \times 2})$ as functions of $x \in \R$. Hence, since the coefficients are smooth and bounded and from estimates \eqref{bounds} we have, for $|\lambda |\leq M$,
\[
\begin{aligned}
|\A^\ep(x, \lambda) - \A^0(x,\lambda) | &\leq \overline{C}(M) \Big( |\varphi^0(x) - \varphi^\ep(x)| + | (\varphi^0)'(x) - (\varphi^\ep)'(x)| + |c_1 - c(\ep)| \Big)\\
&\leq C(M) e^{- \kappa L_\ep/2}.
\end{aligned}
\]
Last estimate, together with \eqref{laH2K} and Theorem \ref{thmexlarge} yields,
\[
\begin{aligned}
|\A^0(x,\lambda) - \A^0_{\infty}| &\leq C(M) e^{- \kappa |x|}, \quad \text{for all } \, x \in \R,\\
|\A^0(x,\lambda) - \A^\ep(x,\lambda)| &\leq C(M) e^{-\kappa L_\ep/2}, \quad \text{for all } \, |x| \leq \frac{L_\ep}{2},
\end{aligned}
\]
for every $|\lambda| \leq M$ and some uniform constants $C(M), \kappa > 0$ (see estimates (5.10) in \cite{AlPl21}). We then conclude that the Hypothesis 1 of Theorem 1.2 by Gardner \cite{Grd97} is satisfied (notice that Hypothesis 1 of Gardner requires the estimates for $|\A^0 - \A^\ep|$ in half a period too, because the fundamental period in \cite{Grd97} is $2L_\ep$). Hypothesis 2 is fulfilled in the set of consistent splitting, $\Omega = \{ \lambda \in \C \, : \, \Re \lambda > g'(1) \}$ (see the proof of Theorem 5.1 in \cite{AlPl21}). And Hypothesis 3 is trivially fulfilled by the traveling pulse by Sturm-Liouville theory. Upon application of Theorem 1.2 in \cite{Grd97} and since the eigenvalue $\bar{\lambda}_0 \in \ptsp(\bar{\cL}^0)$ is simple, we conclude the existence of exactly one eigenvalue, $\lambda \in \lambda(\ep)$, of $\cL_\theta^{c(\ep)}$ for each $\theta \in (-\pi,\pi]$, in the interior of $\cC$.
%
\end{proof}

Henceforth, Corollary \ref{corBloch0} implies that, for the particular case of the Bloch operator with $\theta = 0$, the spectral instability property holds: there exists an unstable eigenvalue $\lambda(\ep) \in \C$ with $\Re \lambda(\ep) > 0$ and an eigenfunction $\Psi^\ep \in \Hdper([0,L_\ep])$, such that $\cL_0^{c(\ep)} \Psi^\ep = \lambda(\ep) \Psi^\ep$. Finally, upon application of the orbital instability criterion (Theorem \ref{mainthem}), we have proved the following
\begin{theorem}[orbital instability of large period waves]
\label{teoorblarge}
Under assumptions \emph{\hyperref[A1]{(A$_1$)}} - \emph{\hyperref[A3]{(A$_3$)}}, \emph{\hyperref[A5]{(A$_5$)}} and \emph{\hyperref[A6]{(A$_6$)}}, there exists $\bar{\ep}_1 \in (0, \ep_1)$ sufficiently small such that each large period wave of Theorem \ref{thmexlarge}, $u(x,t) = \varphi^\ep(x - c(\ep)t)$, with $\ep \in (0, \bar{\ep}_1)$, is orbitally unstable in the periodic space $X_2 = \Hdper([0,L_\ep])$ under the flow of the viscous balance law \eqref{VBL}.
\end{theorem}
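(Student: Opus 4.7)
\medskip
\noindent\textbf{Proof strategy.} The plan is to deduce the statement as a direct application of Theorem \ref{mainthem}, once all its hypotheses are verified for the family $\{\varphi^\ep\}$ of Theorem \ref{thmexlarge}. Assumptions \hyperref[A1]{(A$_1$)} and \hyperref[A2]{(A$_2$)} supply the required regularity $f \in C^4(\R)$ and $g \in C^3(\R)$. Theorem \ref{thmexlarge} provides a $C^3$ profile $\varphi^\ep$ with fundamental period $L_\ep = O(|\log \ep|)$ traveling at speed $c(\ep) = c_1 \pm \ep$, so the structural prerequisites on $f$, $g$, $c$ and $\varphi^\ep$ in Theorem \ref{mainthem} hold automatically.

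The heart of the argument reduces to verifying the spectral instability hypothesis for the specific Bloch operator $\cL_0^{c(\ep)}$ acting on $\Ldper([0,L_\ep])$ (the $\theta = 0$ member of the Bloch family). I would invoke Corollary \ref{corBloch0}: by Lemma \ref{theoinspulse}, the limiting operator $\bar{\cL}^0$ associated to the traveling pulse has a real, simple, isolated unstable eigenvalue $\bar{\lambda}_0 > 0$, so one may draw a positively oriented circle $\cC \subset \{\Re\lambda > 0\}$ of fixed radius around $\bar{\lambda}_0$ containing no other eigenvalue of $\bar{\cL}^0$ in its closure. Corollary \ref{corBloch0} then yields a threshold $\bar{\ep}_1 \in (0, \ep_1)$ such that for every $\ep \in (0, \bar{\ep}_1)$ and every $\theta \in (-\pi,\pi]$ the Bloch eigenvalue problem has exactly one point eigenvalue inside $\cC$. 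Specializing to $\theta = 0$ produces an eigenvalue $\lambda(\ep) \in \ptsp(\cL_0^{c(\ep)})_{|\Ldper}$ with $\Re \lambda(\ep) > 0$ and a corresponding eigenfunction $\Psi^\ep \in \Hdper([0,L_\ep]) = \cD(\cL_0^{c(\ep)})$ satisfying $\cL_0^{c(\ep)} \Psi^\ep = \lambda(\ep) \Psi^\ep$.

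With the unstable eigenpair in hand, Theorem \ref{mainthem} applies with $X = \Hper^2([0,L_\ep])$ and yields the orbital instability of $\varphi^\ep$ in $X_2$ under the flow of \eqref{VBL}, completing the argument. The subtle point worth emphasizing is that Theorem \ref{mainthem} demands an unstable eigenvalue of the periodic operator $\cL_0^{c(\ep)}$ on $\Ldper([0,L_\ep])$, and not merely essential $L^2(\R)$-spectral instability of the wave; the specialization of Corollary \ref{corBloch0} to $\theta = 0$ is precisely what supplies the missing ingredient. Beyond this, I do not anticipate any genuine obstacle: the heavy technical work has already been carried out in Theorem \ref{mainthem} (the abstract reduction to the Henry \emph{et al.} Theorem \ref{teohenry} applied to the period-$T$ iteration map $\cS$) and in Corollary \ref{corBloch0} (the Gardner-type transfer of the unstable homoclinic eigenvalue to the periodic Bloch operators).
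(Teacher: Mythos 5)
Your proposal is correct and follows precisely the same route as the paper: Lemma \ref{theoinspulse} supplies the simple unstable eigenvalue $\bar{\lambda}_0$ of $\bar{\cL}^0$, Corollary \ref{corBloch0} (specialized to $\theta = 0$) transfers it to an unstable point eigenvalue of $\cL_0^{c(\ep)}$ on $\Ldper([0,L_\ep])$ with eigenfunction in $\Hdper([0,L_\ep])$, and Theorem \ref{mainthem} then yields orbital instability in $X_2 = \Hdper([0,L_\ep])$. Your observation that the $\theta = 0$ specialization is the key ingredient distinguishing the periodic spectral hypothesis of Theorem \ref{mainthem} from mere $L^2(\R)$-essential instability is exactly the point the paper's construction of Corollary \ref{corBloch0} is designed to address.
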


\section*{Acknowledgements}

The work of E. \'{A}lvarez was supported by the project CONACYT, FORDECYT-PRONACES 429825/2020 (proyecto aprobado por el CONACYT, FORDECYT-PRONACES 429825/2020), recently renamed as project CF-2019/429825. The work of J. Angulo Pava was partially supported by Grant-CNPq and Universal Project- CAPES/Brazil. J. Angulo would like to express his gratitude to the IIMAS (UNAM), D.F., for its hospitality when this work was carried out. The work of R. G. Plaza was partially supported by DGAPA-UNAM, program PAPIIT, grant IN-104922. 

\def\cprime{$'\!\!$} \def\cprimel{$'\!$}





\begin{thebibliography}{10}

\bibitem{AMP22}
{\sc E.~\'{A}lvarez, R.~Murillo, and R.~G. Plaza}, {\em Spectral instability of
  small-amplitude periodic waves for hyperbolic non-{F}ickian diffusion
  advection models with logistic source}, Math. Model. Nat. Phenom. \textbf{7}
  (2022), pp.~1--25.
\newblock art. no. 13.

\bibitem{AlPl21}
{\sc E.~\'{A}lvarez and R.~G. Plaza}, {\em Existence and spectral instability
  of bounded spatially periodic traveling waves for scalar viscous balance
  laws}, Quart. Appl. Math. \textbf{79} (2021), no.~3, pp.~493--544.

\bibitem{AmannI-95}
{\sc H.~Amann}, {\em Linear and quasilinear parabolic problems. {V}ol. {I}},
  vol.~89 of Monographs in Mathematics, Birkh\"{a}user Boston, Inc., Boston,
  MA, 1995.
\newblock Abstract linear theory.

\bibitem{AmannII-19}
{\sc H.~Amann}, {\em Linear and
  quasilinear parabolic problems. {V}ol. {II}}, vol.~106 of Monographs in
  Mathematics, Birkh\"{a}user/Springer, Cham, 2019.
\newblock Function spaces.

\bibitem{AnLN08}
{\sc J.~Angulo, O.~Lopes, and A.~Neves}, {\em Instability of travelling waves
  for weakly coupled {K}d{V} systems}, Nonlinear Anal. \textbf{69} (2008),
  no.~5-6, pp.~1870--1887.

\bibitem{AngAMS09}
{\sc J.~Angulo~Pava}, {\em Nonlinear dispersive equations. Existence and
  stability of solitary and periodic travelling wave solutions}, vol.~156 of
  Mathematical Surveys and Monographs, American Mathematical Society,
  Providence, RI, 2009.

\bibitem{AngNat08}
{\sc J.~Angulo~Pava and F.~Natali}, {\em Positivity properties of the {F}ourier
  transform and the stability of periodic travelling-wave solutions}, SIAM J.
  Math. Anal. \textbf{40} (2008), no.~3, pp.~1123--1151.

\bibitem{AngNat09}
{\sc J.~Angulo~Pava and F.~Natali}, {\em Stability and
  instability of periodic travelling wave solutions for the critical
  {K}orteweg-de {V}ries and nonlinear {S}chr\"{o}dinger equations}, Phys. D
  \textbf{238} (2009), no.~6, pp.~603--621.

\bibitem{AngNat14}
{\sc J.~Angulo~Pava and F.~Natali}, {\em ({N}on)linear
  instability of periodic traveling waves: {K}lein-{G}ordon and {K}d{V} type
  equations}, Adv. Nonlinear Anal. \textbf{3} (2014), no.~2, pp.~95--123.

\bibitem{AngNat16}
{\sc J.~Angulo~Pava and F.~Natali}, {\em On the instability
  of periodic waves for dispersive equations}, Differ. Integral Equ.
  \textbf{29} (2016), no.~9-10, pp.~837--874.

\bibitem{Arm66}
{\sc R.~Arima}, {\em Local solutions for quasi-linear parabolic equations}, J.
  Math. Kyoto Univ. \textbf{5} (1966), pp.~325--338.

\bibitem{BJNRZ10}
{\sc B.~Barker, M.~A. Johnson, P.~Noble, L.~M. Rodrigues, and K.~Zumbrun}, {\em
  Whitham averaged equations and modulational stability of periodic traveling
  waves of a hyperbolic-parabolic balance law}, Journ{\'{e}}es {\'{E}}quations
  aux dériv{\'{e}}es partielles \textbf{6} (2010), pp.~1--24.

\bibitem{BuLe42}
{\sc S.~E. Buckley and M.~C. Leverett}, {\em Mechanism of fluid displacements
  in sands}, Trans. AIME (Am. Inst. Min. Metall.) \textbf{146} (1942),
  pp.~107--116.

\bibitem{Bur48}
{\sc J.~M. Burgers}, {\em A mathematical model illustrating the theory of
  turbulence}, in Advances in {A}pplied {M}echanics, R.~von Mises and T.~von
  K{\'a}rm{\'a}n, eds., Academic Press Inc., New York, N. Y., 1948,
  pp.~171--199.

\bibitem{CroMas07}
{\sc E.~C.~M. Crooks and C.~Mascia}, {\em Front speeds in the vanishing
  diffusion limit for reaction-diffusion-convection equations}, Differ.
  Integral Equ. \textbf{20} (2007), no.~5, pp.~499--514.

\bibitem{DunSch2}
{\sc N.~Dunford and J.~T. Schwartz}, {\em Linear operators. {P}art {II}:
  Spectral theory. Selfadjoint operators in Hilbert space}, Wiley Classics
  Library, John Wiley \& Sons Inc., New York, 1988.

\bibitem{Fis37}
{\sc R.~A. Fisher}, {\em The wave of advance of advantageous genes}, Ann.
  Eugen. \textbf{7} (1937), pp.~355--369.

\bibitem{Grd97}
{\sc R.~A. Gardner}, {\em Spectral analysis of long wavelength periodic waves
  and applications}, J. Reine Angew. Math. \textbf{491} (1997), pp.~149--181.

\bibitem{Hae03}
{\sc J.~H\"{a}rterich}, {\em Viscous profiles of traveling waves in scalar
  balance laws: the canard case}, Methods Appl. Anal. \textbf{10} (2003),
  no.~1, pp.~97--117.

\bibitem{HaeSa06}
{\sc J.~H\"{a}rterich and K.~Sakamoto}, {\em Front motion in viscous
  conservation laws with stiff source terms}, Adv. Differ. Equ. \textbf{11}
  (2006), no.~7, pp.~721--750.

\bibitem{He81}
{\sc D.~Henry}, {\em Geometric Theory of Semilinear Parabolic Equations},
  no.~840 in Lecture Notes in Mathematics, Springer-{V}erlag, {N}ew {Y}ork,
  1981.

\bibitem{HPW82}
{\sc D.~B. Henry, J.~F. Perez, and W.~F. Wreszinski}, {\em Stability theory for
  solitary-wave solutions of scalar field equations}, Comm. Math. Phys.
  \textbf{85} (1982), no.~3, pp.~351--361.

\bibitem{IoIo01}
{\sc R.~J. Iorio, Jr. and V.~d.~M. Iorio}, {\em Fourier Analysis and Partial
  Differential Equations}, vol.~70 of Cambridge Studies in Advanced
  Mathematics, Cambridge University Press, Cambridge, 2001.

\bibitem{JMMP14}
{\sc C.~K. R.~T. Jones, R.~Marangell, P.~D. Miller, and R.~G. Plaza}, {\em
  Spectral and modulational stability of periodic wavetrains for the nonlinear
  {K}lein-{G}ordon equation}, J. Differ. Equ. \textbf{257} (2014), no.~12,
  pp.~4632--4703.

\bibitem{KaPro13}
{\sc T.~Kapitula and K.~Promislow}, {\em Spectral and dynamical stability of
  nonlinear waves}, vol.~185 of Applied Mathematical Sciences, Springer-Verlag,
  New York, 2013.

\bibitem{Kat80}
{\sc T.~Kato}, {\em Perturbation Theory for Linear Operators}, Classics in
  Mathematics, Springer-{V}erlag, {N}ew {Y}ork, {S}econd~ed., 1980.

\bibitem{KPP37}
{\sc A.~N. Kolmogorov, I.~Petrovsky, and N.~Piskunov}, {\em Etude de
  l’{\'e}quation de la diffusion avec croissance de la quantit{\'e} de
  matiere et son applicationa un probleme biologique}, Mosc. Univ. Bull. Math
  \textbf{1} (1937), pp.~1--25.

\bibitem{LaSoUr68}
{\sc O.~A. Lady{\v{z}}enskaja, V.~A. Solonnikov, and N.~N. Ural{\cprime}ceva},
  {\em Linear and quasilinear equations of parabolic type}, Translated from the
  Russian by S. Smith. Translations of Mathematical Monographs, Vol. 23,
  American Mathematical Society, Providence, R.I., 1968.

\bibitem{La57}
{\sc P.~D. Lax}, {\em Hyperbolic systems of conservation laws {I}{I}}, Comm.
  Pure Appl. Math. \textbf{10} (1957), pp.~537--566.

\bibitem{LeHa16}
{\sc J.~A. Leach and E.~Hana\c{c}}, {\em On the evolution of travelling wave
  solutions of the {B}urgers-{F}isher equation}, Quart. Appl. Math. \textbf{74}
  (2016), no.~2, pp.~337--359.

\bibitem{LopO02}
{\sc O.~Lopes}, {\em A linearized instability result for solitary waves},
  Discrete Contin. Dyn. Syst. \textbf{8} (2002), no.~1, pp.~115--119.

\bibitem{MiGu02}
{\sc R.~E. Mickens and A.~B. Gumel}, {\em Construction and analysis of a
  non-standard finite difference scheme for the {B}urgers-{F}isher equation},
  J. Sound Vib. \textbf{257} (2002), no.~4, pp.~791--797.

\bibitem{SSTC01}
{\sc L.~P. Shilnikov, A.~L. Shilnikov, D.~Turaev, and L.~O. Chua}, {\em Methods
  of qualitative theory in nonlinear dynamics. {P}art {II}}, vol.~5 of World
  Scientific Series on Nonlinear Science. Series A: Monographs and Treatises,
  World Scientific Publishing Co., Inc., River Edge, NJ, 2001.

\bibitem{TayPDE3-2e}
{\sc M.~E. Taylor}, {\em Partial differential equations {III}. {N}onlinear
  equations}, vol.~117 of Applied Mathematical Sciences, Springer-Verlag, New
  York, second~ed., 2011.

\bibitem{WuXi05}
{\sc Y.~Wu and X.~Xing}, {\em The stability of travelling fronts for general
  scalar viscous balance law}, J. Math. Anal. Appl. \textbf{305} (2005), no.~2,
  pp.~698--711.

\bibitem{Xing05}
{\sc X.-x. Xing}, {\em Existence and stability of viscous shock waves for
  non-convex viscous balance law}, Adv. Math. (China) \textbf{34} (2005),
  no.~1, pp.~43--53.

\bibitem{XuJJ16}
{\sc T.~Xu, C.~Jin, and S.~Ji}, {\em Discontinuous traveling waves for scalar
  hyperbolic-parabolic balance law}, Bound. Value Probl. \textbf{2016} (2016),
  no.~31, pp.~1--9.

\bibitem{YngZ19}
{\sc Z.~Yang and K.~Zumbrun}, {\em Convergence as period goes to infinity of
  spectra of periodic traveling waves toward essential spectra of a homoclinic
  limit}, J. Math. Pures Appl. (9) \textbf{132} (2019), pp.~27--40.

\bibitem{ZeidI86}
{\sc E.~Zeidler}, {\em Nonlinear functional analysis and its applications {I}.
  Fixed-point theorems}, Springer-Verlag, New York, 1986.

\end{thebibliography}

\end{document}